\documentclass[10pt]{article}

\usepackage{authblk}
\usepackage[dvips]{graphicx}
\usepackage{amsfonts}
\usepackage{amsopn}
\usepackage{amsthm} 
\usepackage{amssymb}
\usepackage{color}
\usepackage{hyperref}
\usepackage{soul}

\usepackage{latexsym}
\usepackage{amsgen} 
\usepackage{bigints}

\newcommand\reallywidehat[1]{%
\savestack{\tmpbox}{\stretchto{%
  \scaleto{%
    \scalerel*[\widthof{\ensuremath{#1}}]{\kern-.6pt\bigwedge\kern-.6pt}%
    {\rule[-\textheight/2]{1ex}{\textheight}}
  }{\textheight}%
}{0.5ex}}%
\stackon[1pt]{#1}{\tmpbox}%
}

\newtheorem*{theorem*}{Theorem}
\newtheorem{theorem}{Theorem}[section]
\newtheorem{definition}[theorem]{Definition}
\newtheorem{lemma}[theorem]{Lemma}
\newtheorem{corollary}[theorem]{Corollary}
\newtheorem{proposition}[theorem]{Proposition}

\newtheorem{example}[theorem]{Example}
\newtheorem{remark}[theorem]{Remark}

\newtheorem*{ack*}{Acknowledgment}
\numberwithin{equation}{section}

\newcommand{\cA}{ \mathcal{A}}

\newcommand{\cC}{ \mathcal{C}}

\newcommand{\cF}{ \mathcal{F}}
\newcommand{\cH}{ \mathcal{H}}

\newcommand{\cJ}{ \mathcal{J}}
\newcommand{\cL}{ \mathcal{L}}

\newcommand{\cO}{ \mathcal{O}}
\newcommand{\cP}{ \mathcal{P}}

\newcommand{\cS}{ \mathcal{S}}
\newcommand{\cU}{ \mathcal{U}}
\newcommand{\cV}{ \mathcal{V}}
\newcommand{\cW}{ \mathcal{W}}
\newcommand{\mW}{ \mathcal{W}}
\newcommand{\mP}{ \mathcal{P}}

\newcommand{\bB}{\mathbb B}

\newcommand{\bE}{\mathbb E}
\newcommand{\bG}{\mathbb G}

\newcommand{\bP}{\mathbb P}
\newcommand{\bR}{\mathbb R}
\newcommand{\R}{\mathbb R}
\newcommand{\bS}{\mathbb S}

\newcommand{\bV}{\mathbb V}

\newcommand\bigcdot{\mathpalette\bigcdot@{.5}}

\def\R{{\mathbb{R}}}

\def\cH{\mathcal{H}}
\def\cL{\mathcal{L}}
\def\G{\mathbb{G}}

\usepackage{amssymb}

\begin{document}

\title{Master equations with an individual noise on finite state graphs}
\author[1]{Wilfrid Gangbo\thanks{\href{mailto:wgangbo@math.ucla.edu}{wgangbo@math.ucla.edu}}}
\author[1]{Sebastian Munoz\thanks{\href{mailto:sebastian@math.ucla.edu}{sebastian@math.ucla.edu}}}
\author[2]{Jeremy Wu\thanks{\href{mailto:jeremy.wu@umanitoba.ca}{jeremy.wu@umanitoba.ca}}}
\author[1]{Zhaoyu Zhang\thanks{\href{mailto:zhaoyu@math.ucla.edu}{zhaoyu@math.ucla.edu}}}
\affil[1]{Department of Mathematics, UCLA, California, USA}
\affil[2]{Department of Mathematics, University of Manitoba, Canada}


\date{}
\maketitle

\begin{abstract} We develop a classical well-posedness and regularity theory on a finite connected weighted graph for an extended mean field game system, its associated master equation, and a Hamilton--Jacobi--Bellman equation on the probability simplex, all in the presence of an individual noise operator. The geometric structure is inherited from the logarithmic-mean activation functional of discrete optimal transport, under which the entropic Fokker--Planck equation appears as a gradient flow on the graph and the individual noise operator is a bilinear form in the probability vector and the Wasserstein gradient. A central technical step is a quantitative preservation-of-positivity estimate for the discrete continuity equation, which rules out finite-time boundary degeneracy and yields a classical solution theory for the master equation on the open simplex without imposing any boundary condition. As an application, we recover a Nash equilibrium interpretation of the discrete system in terms of Markov chains on the graph. Our setup is inspired by the computational algorithms for optimal mass transport of \cite{chow2012,li_2017} and provides a rigorous well-posedness theory for several of the equations derived in \cite{GaoLiLiu}.
\end{abstract}

\renewcommand{\thefootnote}{}
\footnotetext{\textit{2020 Mathematics Subject Classification.} 35F21, 35R02, 49L12, 49N80, 49Q22, 60H10, 60H30, 91A43.\\
\indent\textit{Key words and phrases.} master equation; mean field games; finite state graphs; classical solutions.}
\renewcommand{\thefootnote}{\arabic{footnote}}

%
%
\section{Introduction}\label{sec:introd}

This paper develops a classical well-posedness and regularity theory on a finite connected weighted graph $\bG=(\bV,\bE,\omega)$ for three equations that arise in the study of mean field games (MFG) with individual noise: the extended MFG system~\eqref{eq:mfg}, its associated master equation~\eqref{eq:master}, and the Hamilton--Jacobi--Bellman equation on the probability simplex $\cP(\bG)$ satisfied by the value function of an action-minimization problem. All three share a common source of analytic difficulty: the natural geometric structure on $\cP(\bG)$, inherited from discrete optimal transport, is degenerate on the boundary of the simplex. Our contribution is a classical solution theory in the relative interior $\cP_0(\G)$, resting on a quantitative lower bound on densities that rules out finite-time boundary degeneracy and on a regularity theory that upgrades the value function of the action-minimization problem to class $C^2$ in both variables. As a byproduct, the master equation is uniquely solvable on the open simplex without imposing any boundary condition on $\partial\cP_0(\G)$. We also use this theory to construct Nash equilibria for a continuous-time Markov-chain MFG on $\bG$. For the rest of this present section, we prefer to postpone the notations to Section~\ref{sec:setup} while presenting the main ideas and results immediately.

The geometric structure on $\cP(\bG)$ that we work with is inherited from the \emph{logarithmic-mean} activation functional $\theta(r,s)=(r-s)/(\log r-\log s)$, introduced for discrete optimal transport in \cite{chow2012,li_2017}. It gives rise to the \emph{logarithmic-mean Hamiltonian}
\[
\cH_\theta(\mu,p):=\frac 1 4 \sum_{(i,j)\in\bE}\theta(\mu^i,\mu^j)(p^{ij})^2,
\]
which serves as our model Hamiltonian and motivates the structural hypotheses under which our results hold. Two features make this choice central to our analysis: the entropic Fokker--Planck equation on $\bG$ appears as a gradient flow on the weighted graph precisely when the underlying geometry is built from $\theta$ \cite{ErbasMW}, and $\theta$ is the weighting under which the discrete individual-noise operator $\Delta_{\text{ind}}$ of \eqref{eq:sep06.2025.2} is a bilinear form in $(\mu,\nabla_\cW\cV)$, mirroring its continuum counterpart. Systems of the form~\eqref{eq:mfg}--\eqref{eq:master} find their motivation in biological and chemical reaction models \cite{GaoLiu,GaoLiLiu2}, where they arise as thermodynamic limits of Poisson counting processes, and in mean field games on finite state spaces \cite{CardialaguetDLL,GangboMMZ22,HuangMC}.

Our approach contrasts with much of the existing literature on graph mean field games. The works \cite{CecchinP,Delarue2020,GMSouza} start from a Markov chain formulation and derive PDEs on the graph, and \cite{BayraktarC,BayraktarC2,BertucciC} study master equations in the presence of Wright--Fisher or common noise; we take the reverse path, beginning with PDEs on the graph and recover a Markov chain formulation in Section~\ref{sec:NE-discreteMFG}. A second, quantitative difference is that most prior works define the graph gradient by $(\phi^i-\phi^j)_{ij}$, implicitly forcing the underlying graph to be complete with uniform weights, whereas our weighted gradient $\big(\sqrt{\omega_{ij}}(\phi^i-\phi^j)\big)_{ij}$ accommodates arbitrary connected graphs with an inhomogeneous weight profile.

Our first result is a quantitative preservation-of-positivity estimate for the discrete continuity equation with a general mobility-dominated flux. It plays a pivotal role throughout the remainder of the paper: it is the tool that makes it possible to do classical analysis inside $\cP_0(\G)$ without any a priori control near $\partial\cP_0(\G)$. The notation used in the statements below---including $\bS(n)$, $\cP_\epsilon(\G)$, $\nabla_\G$, $\Delta_\G$, $\nabla_\cW$, $\Delta_{\text{ind}}$, $\cA_t^T$, and $\cU$---is collected in Sections~\ref{sec:setup} and~\ref{subsection:feb14.2026}.

\begin{theorem}[Quantitative interiority]\label{thm:main-interiority}
Let $h:(0,\infty)\to(0,\infty)$ be a bounded function with
\begin{equation}\label{eq:h properties}
\lim_{u\to 0^+}h(u)=\lim_{u\to +\infty}h(u)=0,
\end{equation}
and let $A\in C^1([0,T]\times(0,1)^n;\bS(n))$ be skew-symmetric and satisfy the mobility-dominated bound
\begin{equation}\label{as:A conti}
|A_{ij}(s,\mu)|\leq (\mu^i+\mu^j)\,h(\mu^j/\mu^i),\qquad (i,j)\in\bE,\ (s,\mu)\in[0,T]\times(0,1)^n.
\end{equation}
Then, for every $\epsilon>0$ and every $\mu\in\cP_\epsilon(\bG)$, the continuity equation
\begin{equation}\label{eq:conti general}
\dot\rho(s)=\nabla_\G\cdot A(s,\rho(s))+\Delta_\G\rho(s),\qquad \rho(0)=\mu,
\end{equation}
admits a unique classical solution $\rho\in C^1([0,T];(0,1)^n)$, and there exist constants $c,r>0$ depending only on $n$, $\omega_{\min}$, $\omega_{\max}$, and $h$ such that
\begin{equation}\label{eq:rho lower bd}
\min_{1\leq i\leq n}\rho_i(s)\geq c\,\epsilon\,e^{-rs},\qquad s\in[0,T].
\end{equation}
\end{theorem}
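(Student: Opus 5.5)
We first fix conventions, which the paper states later in Section~\ref{sec:setup}. We take $(\nabla_\G\cdot A)^i=\sum_j \sqrt{\omega_{ij}}\,A_{ij}$, up to sign and normalisation, and $(\Delta_\G\rho)^i=\sum_j\omega_{ij}(\rho^j-\rho^i)$. Skew-symmetry of $A$ gives $\sum_i \dot\rho^i=0$, so total mass is conserved. The proof has three parts: local existence, an a priori lower bound on the minimum, and continuation.

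\textbf{Local existence.} The right-hand side of \eqref{eq:conti general} is $C^1$ on $[0,T]\times(0,1)^n$, hence locally Lipschitz there. Cauchy--Lipschitz gives a unique $C^1$ solution on a maximal interval $[0,T^*)$ on which $\rho(s)\in(0,1)^n$. Because mass is conserved and $\rho>0$, we have $\rho(s)\in\cP_0(\G)$. Leaving $(0,1)^n$ therefore means some $\rho^i\to0$; a coordinate tending to $1$ would force the other coordinates to $0$, so it is the same event. It thus suffices to prove \eqref{eq:rho lower bd} on $[0,T^*)$. That bound keeps $\rho$ inside a compact subset of $(0,1)^n$, so $T^*>T$ (the solution extends past $T$).

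\textbf{Lower bound.} Set $m(s)=\min_i\rho^i(s)$. This is Lipschitz, and at a.e.\ $s$ its derivative is $\dot\rho^i$ for an index $i$ realising the minimum. Fix such an $i$ and write $\rho^j=\rho^i t_j$ with $t_j\ge 1$. Then
\[
\dot\rho^i \ge \sum_{j\sim i}\Big[\omega_{ij}\rho^i(t_j-1)-\sqrt{\omega_{ij}}\,\rho^i(1+t_j)h(t_j)\Big].
\]
The key elementary fact is
\[
\sqrt{\omega}\,(1+t)h(t)\le \omega(t-1)+r_0\quad\text{for all } t\ge1,
\]
for some $r_0$ depending only on $h$ and the weights. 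Here is why. Since $h(t)\to0$ as $t\to\infty$, there is $t_0$ with $\sqrt{\omega_{\max}}\,h(t)\le\omega_{\min}/3$ for $t\ge t_0$. For $t\ge \max(t_0,2)$ we have $1+t\le 3(t-1)$, so $\sqrt{\omega}\,(1+t)h(t)\le \omega_{\min}(t-1)\le\omega(t-1)$. For $t\in[1,\max(t_0,2)]$ the left side is bounded by $\sqrt{\omega_{\max}}\,(1+\max(t_0,2))\|h\|_\infty=:r_0$. This gives $\dot m\ge -r\,m$ with $r=n r_0$, and Grönwall yields $m(s)\ge m(0)e^{-rs}\ge\epsilon e^{-rs}$, i.e.\ $c=1$.

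Two remarks on this step. The condition $h(0^+)=0$ is not needed at the minimising vertex, since all ratios there are $\ge1$. Its role is only to make the bound \eqref{as:A conti} consistent under skew-symmetry. Also, if $\cP_\epsilon$ is defined differently (for instance through $\min\mu^i\ge\epsilon/n$), the constant $c$ absorbs the factor.

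\textbf{Main obstacle.} The delicate point is making the min-argument rigorous. One issue is that the minimising index may switch; this is handled by Danskin's formula, or by working with the lower Dini derivative. The other is the absorption estimate above, which uses the Laplacian's repulsion $\omega_{ij}(\rho^j-\rho^i)$ to dominate the possibly large flux when neighbouring ratios are large. This is exactly where $h(\infty)=0$ enters. Uniqueness follows from the local Lipschitz property once the solution stays in a compact subset of $(0,1)^n$.
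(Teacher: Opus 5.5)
Your proof is correct. It takes a genuinely different and considerably shorter route than the paper's.

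\textbf{Your route.} You apply a minimum principle directly. At a vertex $i$ realizing $m(s)=\min_k\rho^k(s)$, every neighbour has ratio $t_j=\rho^j/\rho^i\ge 1$. Your absorption inequality $\sqrt{\omega}(1+t)h(t)\le \omega(t-1)+r_0$ does two things. For large $t_j$, the Laplacian term $\omega_{ij}(\rho^j-\rho^i)=\omega_{ij}\rho^i(t_j-1)$ swallows the flux; this is where $h(+\infty)=0$ enters. For moderate $t_j$, the flux is $O(\rho^i)$ because $h$ is bounded. Hence $\dot m\ge -rm$ a.e., and Gr\"onwall gives the bound with $c=1$ and $r=(n-1)\sqrt{\omega_{\max}}\,(1+\max(t_0,2))\|h\|_\infty$.

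\textbf{The paper's route.} The paper uses the same absorption mechanism, but only at the first hitting time of a level $\delta$ by a single vertex, and builds a two-step scheme around it.
\begin{itemize}
\item Lemma~\ref{lem:interiority 1} shows that $\rho_{i_0}(t_0)=\delta$ forces every neighbour to be at most $C_*C_0\delta$ at time $t_0$. It then propagates this smallness along shortest paths, which uses connectedness, to get $\min_{[0,t_0]}\rho_i\le K\delta$ for all $i$, with $K=(C_*C_0)^{n-1}$.
\item Lemma~\ref{lem:interiority 2} uses mass conservation (some vertex carries mass at least $1/n$) and the uniform speed bound $|\dot\rho_i|\le C$. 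It shows that the minimum needs time at least $c_0$ to drop by the factor $2K$.
\item Iterating gives $c=(2K)^{-2}$ and $r=\log(2K)/c_0$.
\end{itemize}

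\textbf{Comparison.} Your argument needs neither connectedness nor mass conservation for the lower bound; mass conservation only keeps $\rho^i<1$. It does not use $h(0^+)=0$, which you rightly note is redundant once skew-symmetry lets you apply the hypothesis in both orientations. It also yields much cleaner constants. The paper's route buys extra qualitative information of independent interest: smallness at one vertex forces comparable earlier smallness at every vertex, together with a quantitative waiting time. Neither is needed for the theorem as stated.

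\textbf{One cosmetic point.} $A$ is only defined for $s\in[0,T]$, so the continuation step should conclude that the maximal solution exists on all of $[0,T]$, not that $T^*>T$. Since $\rho$ stays in a compact subset of $(0,1)^n$, $\dot\rho$ is bounded and $\rho$ extends continuously to $T^*$. One can then restart if $T^*<T$.
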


The model case is $A(s,\mu)=D_p\cH_\theta(\mu,p(s))$ for continuous $p:[0,T]\to\bS(n)$, for which~\eqref{as:A conti} follows from~\eqref{eq:dec21.2025.2}.

Our second main result is classical well-posedness for the forward--backward MFG system~\eqref{eq:mfg} and its associated master equation~\eqref{eq:master} in the \emph{extended} setting \cite{LiSo20,Mun23}, where the flux $B$ need not coincide with $D_p H$. The Lasry--Lions monotonicity conditions~\eqref{as:uniqueness 1}--\eqref{as:uniqueness 3} used for uniqueness are stated in Section~\ref{sec:classical}.

\begin{theorem}[Classical solutions to the MFG system and master equation]\label{thm:main-mfg}
Assume that $H\in C^1((0,1)^n\times\bS(n);\R^n)$ and $B\in C^1((0,1)^n\times\bS(n);\bS(n))$ satisfy, for some constant $C_1>0$, the coercivity and lower bound conditions
\[
(B(\mu,p),p)\geq (H(\mu,p),\mu)-C_1,\qquad H_i(\mu,p)\geq -C_1,
\]
and the mobility-dominated flux bound
\[
|B_{ij}(\mu,p)|\leq (\mu^i+\mu^j)\,a(\mu^j/\mu^i,p) \qquad \text{for all }(i,j)\in\bE,
\]
for some locally bounded $a:(0,\infty)\times\bS(n)\to[0,\infty)$ with $\lim_{u\to 0^+}a(u,p)=\lim_{u\to +\infty}a(u,p)=0$ locally uniformly in $p$, and let $g\in C^1([0,1]^n;\R^n)$ satisfy $|g|\leq C_1$. Then, for every $(t,\mu)\in[0,T)\times\cP_0(\G)$, the extended MFG system
\begin{equation}\label{eq:mfg}
\begin{cases}
\dot\phi(s)=H(\rho(s),\nabla_\G\phi(s))-\Delta_\G\phi(s) & s\in(t,T),\\
\dot\rho(s)=\nabla_\G\cdot B(\rho(s),\nabla_\G\phi(s))+\Delta_\G\rho(s) & s\in(t,T),\\
\phi(T)=g(\rho(T)),\quad \rho(t)=\mu,
\end{cases}
\end{equation}
admits a classical solution $(\phi^{t,\mu},\rho^{t,\mu})\in C^2([t,T];\R^n\times(0,1)^n)$, unique under the Lasry--Lions monotonicity conditions~\eqref{as:uniqueness 1}--\eqref{as:uniqueness 3}. The value function $u:[0,T]\times\cP_0(\G)\to\R^n$ given by $u(t,\mu):=\phi^{t,\mu}(t)$ for $t<T$ and $u(T,\mu):=g(\mu)$ belongs to $C^1([0,T]\times\cP_0(\G);\R^n)$ and is the unique classical solution on $[0,T]\times\cP_0(\G)$ of the master equation
\begin{equation}\label{eq:master}
\begin{cases}
\partial_t u^i-(\nabla_\cW u^i,B(\mu,\nabla_\G u))+\Delta_\text{ind}u^i-H^i(\mu,\nabla_\G u)+\Delta_\G u^i=0 & \text{on }[0,T]\times\cP_0(\G),\\
u(T,\cdot)=g & \text{on }\cP_0(\G).
\end{cases}
\end{equation}
\end{theorem}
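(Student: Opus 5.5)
Existence for the forward--backward system~\eqref{eq:mfg} will come from a Schauder (Leray--Schauder) fixed point on the density. The key input is Theorem~\ref{thm:main-interiority}: we need a priori bounds that keep $\rho$ uniformly inside $\cP_0(\G)$.

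\textbf{A priori bounds on $\phi$.} Fix $(t,\mu)$ with $\mu\in\cP_\epsilon(\G)$. We first bound $\phi$ independently of $\rho$. Since $\Delta_\G$ generates a Markov semigroup and the backward equation is $\dot\phi=H-\Delta_\G\phi$, the bounds $H_i\ge -C_1$ and $|g|\le C_1$ give a one-sided bound $\phi^i(s)\le C_1(1+T)$ by a comparison argument at the maximal index.

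For the other side, we use the duality identity
\[
\frac{d}{ds}(\phi,\rho)=(H(\rho,\nabla_\G\phi),\rho)-(B(\rho,\nabla_\G\phi),\nabla_\G\phi).
\]
The Laplacian terms cancel by symmetry of $\Delta_\G$, and the divergence term becomes $-(B,\nabla_\G\phi)$ after summation by parts. The coercivity hypothesis makes the right-hand side at most $C_1$, which gives a lower bound on $(\phi(t),\mu)$ in terms of $(g,\rho(T))$. Combining this with the upper bound, and with the lower bound on densities below, should give a two-sided bound on $\phi$ and hence on $p=\nabla_\G\phi$ in a compact set $K\subset\bS(n)$. I expect this step to be the most delicate. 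The upper bound together with the weighted average lower bound controls each $\phi^i$ only where $\rho^i$ is bounded below, so it must be run jointly with the positivity estimate.

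\textbf{Uniform interiority and the fixed point.} Once $p$ lies in $K$, set $h(u)=\sup_{p\in K}a(u,p)$. This $h$ is bounded and satisfies~\eqref{eq:h properties} because the limits of $a$ are locally uniform in $p$. Theorem~\ref{thm:main-interiority} then gives $\rho\ge c\,\epsilon e^{-rT}$.

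To break the circularity, we work in the convex compact set $\cC$ of continuous curves with values in $\cP_\delta(\G)$. The map $\rho\mapsto\phi[\rho]\mapsto\tilde\rho$ solves the backward ODE, which is globally solvable by the bounds above and local Lipschitz continuity, and then the forward ODE via Theorem~\ref{thm:main-interiority}. It sends $\cC$ into an equicontinuous subset of $\cC$. Schauder's theorem then yields a classical solution, and $C^2$ regularity follows because $H,B\in C^1$.

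\textbf{Uniqueness.} Take two solutions and apply the standard Lasry--Lions computation to $\frac{d}{ds}(\phi_1-\phi_2,\rho_1-\rho_2)$. Conditions~\eqref{as:uniqueness 1}--\eqref{as:uniqueness 3} force the difference to vanish.

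\textbf{Master equation.} The value function is $u(t,\mu)=\phi^{t,\mu}(t)$. Its $C^1$ regularity comes from differentiating the system with respect to $\mu$. The linearized forward--backward system has a unique solution by the monotonicity, which shows the flow is $C^1$, with derivative bounds locally uniform on each $\cP_\epsilon(\G)$.

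To derive~\eqref{eq:master}, we use the flow property $u(s,\rho^{t,\mu}(s))=\phi^{t,\mu}(s)$ and differentiate in $s$ at $s=t$. The chain rule gives
\[
(\nabla_\cW u^i,\ \nabla_\G\cdot B+\Delta_\G\rho)
\]
in the form $-(\nabla_\cW u^i,B)$ plus the $\Delta_{\text{ind}}$ term, by the definition of $\nabla_\cW$ and the bilinear structure of $\Delta_{\text{ind}}$. Equating this with $H-\Delta_\G\phi$ yields the master equation.

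Uniqueness of the master equation needs no boundary condition. Given any classical solution $v$, solve the forward equation with $p=\nabla_\G v(s,\rho)$ using Theorem~\ref{thm:main-interiority} with $h$ built on compacts; the solution stays interior. Then $\phi(s):=v(s,\rho(s))$ solves~\eqref{eq:mfg}, so $v=u$ by uniqueness of the system.
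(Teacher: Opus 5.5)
Your overall architecture matches the paper's: maximum-principle upper bound, duality, interiority, fixed point, Lasry--Lions uniqueness, and the flow property for the master equation. However, three steps do not go through as written.

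\textbf{A priori bounds and the fixed point.} The step you flag as delicate is a real gap. Running the weighted bound $(\phi(s),\rho(s))\ge -C$ jointly with Theorem~\ref{thm:main-interiority} need not close. With $M=\sup|\nabla_\G\phi|$, interiority only gives $\min\rho\ge c(M)\epsilon$, with $c(M)$ deteriorating as $M$ grows, while the weighted bound returns $M\lesssim 1/(c(M)\epsilon)$; for a general $a$ these need not be consistent.

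The missing idea is to use $\mu\in\cP_\epsilon(\G)$ only at the initial time. Duality on $[t,T]$ and the upper bound give $(\phi(t)^-,\mu)\le C$, hence $\Phi(t):=(\phi(t),\mathbf{1})\ge -C/\epsilon$. Since $(\Delta_\G\phi,\mathbf{1})=0$ and $H_i\ge -C_1$, one has $\dot\Phi\ge -nC_1$, so this lower bound propagates forward in time. Then $\phi_i(s)\ge\Phi(s)-(n-1)\max_k\phi_k(s)$ bounds every component by $C/\epsilon$ with no information on $\rho(s)$ for $s>t$. Only after this is interiority applied, with $h(u)=\sup_{|p|\le M}a(u,p)$.

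Separately, plain Schauder on density curves needs these bounds for an arbitrary frozen input $\rho\in\cC$. There the duality identity is lost, because coercivity is evaluated at the transported density rather than the input. Worse, the nonlinear backward ODE $\dot\phi=H(\rho,\nabla_\G\phi)-\Delta_\G\phi$ can blow up. Take $n=2$, $\omega_{12}=1$, $H_1=\theta(\mu^1,\mu^2)(p^{12})^2$, $H_2=0$, $B^{ij}=\theta(\mu^i,\mu^j)p^{ij}$; all existence hypotheses hold with $C_1=3$. With $\mu=(1/2,1/2)$, input $\rho\equiv\mu$ and $g\equiv(-3,3)$, the difference $d=\phi^1-\phi^2$ solves $\dot d=d^2/2+2d$ with $d(T)=-6$, which blows up backward once $T-t>(\log 3)/2$. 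The choice of $\delta$ for $\cC$ also reproduces the circularity.

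The paper avoids both problems by applying Schaefer's theorem in the variable $\varphi\in C^0([t,T];\R^n)$. The density $\rho$ is transported by the frozen $\nabla_\G\varphi$, so the backward equation $\dot\phi=H(\rho,\nabla_\G\varphi)-\Delta_\G\phi$ is linear and always solvable. Bounds are then needed only for $\varphi=\lambda\mathcal{T}(\varphi)$, i.e.\ for the coupled family \eqref{eq:mfg lambda}, where $\frac{d}{ds}(\lambda\phi,\rho)\le 2C_1$.

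\textbf{Uniqueness for the master equation, and the $C^1$ step.} Transporting $\rho$ by $\nabla_\G v(s,\rho)$ for an arbitrary classical solution $v$ requires a single $h$ in Theorem~\ref{thm:main-interiority} valid up to $\partial\cP_0(\G)$. That amounts to a bound on $\nabla_\G v$ near the boundary, which is precisely what is not assumed. If you truncate at level $R$, you get a floor $\delta(R)$, but nothing forces $\sup_{\cP_{\delta(R)}(\G)}|\nabla_\G v|\le R$. So the $v$-driven trajectory may leave every compact set before $T$, and then there is no terminal condition at $T$ to feed into uniqueness for \eqref{eq:mfg}.

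The paper instead works along the already constructed interior trajectory $\rho^{t,\mu}$. Set $\eta(s):=v(s,\rho^{t,\mu}(s))-\phi^{t,\mu}(s)$. The chain rule and the master equation give $\dot\eta=f(s,\eta)$ with $f(s,0)=0$ and $\eta(T)=0$. Here $f$ is locally Lipschitz in $\eta$, since $H,B\in C^1$ and $s\mapsto\nabla_\cW v(s,\rho^{t,\mu}(s))$ is continuous. Hence $\eta\equiv 0$, and only values of $v$ on a compact subset of $[0,T]\times\cP_0(\G)$ are ever used.

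Finally, your $C^1$ step records only injectivity of the linearized system. You still need:
\begin{itemize}
\item an implicit-function setup that carries the initial time (the paper rescales to $[0,1]$ with parameter $\tau=T-t$);
\item surjectivity of the linearization, obtained from injectivity through a finite-dimensional shooting map;
\item a uniform constant in \eqref{as:uniqueness 3} on the compact trajectory set, together with $(\eta,Dg\,\eta)\ge 0$ from \eqref{as:uniqueness 2};
\item a separate argument at $t=T$.
\end{itemize}
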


Our third main result establishes well-posedness, $C^2$ regularity, and the Hamilton--Jacobi--Bellman (HJB) equation for the value function $\cU$ of~\eqref{eq:aug01.2025.2}, associated with an action-minimization problem driven by a convex Lagrangian on the weighted graph; see Section~\ref{subsection:feb14.2026} for the action $\cA_t^T$, the admissible class $\cC_t^T(\mu,\cdot)$, and the definition of $\cU$.

\begin{theorem}[Classical $C^2$ solutions of the HJB equation on $\cP_0(\G)$]\label{thm:main-hjb}
Let $\cH$ denote the Legendre transform of $\cL$ in its second variable, and suppose that $(\cL,\cF,\cU_T)$ satisfy the structural assumptions collected in Section~\ref{subsection:assumptions-g}. Then:
\begin{enumerate}
\item[(i)] For every $(t,\mu)\in[0,T)\times\cP_0(\G)$, the action-minimization problem defining $\cU(t,\mu)$ admits a unique minimizer $(\rho,m)\in C^\infty([t,T];\cP_0(\G)\times\bS(n))$, characterized by $m=D_p\cH(\rho,-\nabla_\G\phi)-\nabla_\G\rho$, where $(\phi,\rho)\in C^\infty([t,T];\R^n\times\cP_0(\G))$ is the unique classical solution of the forward--backward system
\begin{equation}\label{eq:mfg-withcH}
\begin{cases}
\dot\rho(s)+\nabla_\G\cdot D_p\cH(\rho,-\nabla_\G\phi)-\Delta_\G\rho=0 & s\in(t,T),\\
\dot\phi(s)-D_\mu\cH(\rho,-\nabla_\G\phi)+\Delta_\G\phi-D_\mu\cF(\rho)=0 & s\in(t,T),\\
\phi(T)=D_\mu\cU_T(\rho(T)),\quad \rho(t)=\mu;
\end{cases}
\end{equation}
moreover, $\nabla_\cW\cU(s,\rho(s))=\nabla_\G\phi(s)$ for all $s\in[t,T]$.
\item[(ii)] $\cU$ belongs to $C^2([0,T]\times\cP_0(\G))$ and is a classical solution of the HJB equation
\[
-\partial_t\cU+\cH(\cdot,-\nabla_\cW\cU)-\Delta_{\text{ind}}\cU+\cF=0,\qquad \cU(T,\cdot)=\cU_T.
\]
\end{enumerate}
\end{theorem}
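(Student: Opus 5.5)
The argument splits into three parts: (i) via existence and uniqueness of a minimizer and first-order conditions, (ii) via smoothness of $\cU$ in $(t,\mu)$ and a dynamic programming argument.

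\textbf{Step 1: existence and interiority of minimizers.} Fix $(t,\mu)\in[0,T)\times\cP_0(\G)$ with $\mu\in\cP_\epsilon(\G)$. Take a minimizing sequence $(\rho_k,m_k)$ in $\cC_t^T(\mu,\cdot)$ with bounded action. The superlinear growth of $\cL$ from Section~\ref{subsection:assumptions-g}, together with the boundedness of $\cF$ and $\cU_T$, bounds $\int_t^T\cL(\rho_k,m_k)\,ds$. This gives weak $L^2$ compactness of the fluxes and equicontinuity of $\rho_k$. Joint convexity and lower semicontinuity of $(\rho,m)\mapsto\cL(\rho,m)$ then yield a minimizer. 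Uniqueness follows from strict convexity of $\cL$ in $m$ together with the convexity assumptions on $\cF$ and $\cU_T$.

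To show the minimizer stays in $\cP_0(\G)$, I argue as follows. First perturb toward the interior to see that the minimizer cannot have positive-measure boundary contact at finite cost. The entropic structure helps here, since $\cL$ penalizes fluxes out of near-empty vertices through $\theta$. Then apply the Euler--Lagrange equations: these give $m=D_p\cH(\rho,-\nabla_\G\phi)-\nabla_\G\rho$ for some dual potential $\phi$, so $\rho$ solves a continuity equation of the form \eqref{eq:conti general} with $A=D_p\cH(\rho,-\nabla_\G\phi)$. By the assumed structure, $A$ satisfies the mobility bound \eqref{as:A conti}, as in the model case \eqref{eq:dec21.2025.2}. Theorem~\ref{thm:main-interiority} then gives $\min_i\rho_i(s)\ge c\epsilon e^{-rs}$.

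\textbf{Step 2: the forward--backward system and smoothness.} With $\rho$ bounded away from $\partial\cP_0(\G)$, the first-order optimality conditions (variations in $m$ preserving the constraint) produce the adjoint $\phi$ solving the backward equation in \eqref{eq:mfg-withcH}. The terminal condition $\phi(T)=D_\mu\cU_T(\rho(T))$ comes from the terminal cost. Existence for \eqref{eq:mfg-withcH} is also covered by Theorem~\ref{thm:main-mfg} with $H=-D_\mu\cH-D_\mu\cF$ and $B=-D_p\cH$. Uniqueness of solutions to the system follows from its equivalence with minimizers, via convexity and a Lasry--Lions type computation. Since $\cH,\cF,\cU_T$ are smooth on the interior, bootstrapping the ODE system gives $C^\infty$ regularity.

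The identity $\nabla_\cW\cU(s,\rho(s))=\nabla_\G\phi(s)$ comes from a first variation of the value in $\mu$ along admissible perturbations. The dynamic programming principle makes $(\rho,m)|_{[s,T]}$ optimal from $(s,\rho(s))$, which reduces the identity to $s=t$.

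\textbf{Step 3: $C^2$ regularity and the HJB equation.} This is the main obstacle. I plan to show that $(t,\mu)\mapsto(\phi^{t,\mu},\rho^{t,\mu})$ is $C^1$ by the implicit function theorem applied to the shooting map $\psi\mapsto\phi(T)-D_\mu\cU_T(\rho(T))$, where $(\rho,\phi)$ is the forward solution with $\phi(t)=\psi$. The hard part is nondegeneracy of the linearized system, meaning the only solution of the linearization with zero data is trivial. I would derive this from the second-order convexity of the action: pairing the linearized $\rho$ and $\phi$ equations yields a sum of nonnegative quadratic forms in $D^2\cH$, $D^2\cF$ and $D^2\cU_T$, which must vanish. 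Since the Step 1 lower bound is uniform on compact subsets of $\cP_0(\G)$, the resulting derivatives are locally bounded and continuous.

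This gives $\nabla_\cW\cU(t,\mu)=\nabla_\G\phi^{t,\mu}(t)$ as a $C^1$ function, hence $\cU\in C^2$ in $\mu$. Time regularity follows from the dynamic programming identity
\[
\cU(t,\mu)=\int_t^{t+h}\big(\cL(\rho,m)+\cF(\rho)\big)ds+\cU(t+h,\rho(t+h)).
\]
Differentiating this in $h$ at $h=0$ and using the optimal $m$ gives the HJB equation. The term $-\Delta_{\text{ind}}\cU$ arises from the $-\nabla_\G\rho$ part of $m$, paired with $\nabla_\cW\cU$. The term $\cH(\cdot,-\nabla_\cW\cU)$ comes from Legendre duality, since at the optimum $\cL(\rho,m+\nabla_\G\rho)+\cH(\rho,-\nabla_\G\phi)=(m+\nabla_\G\rho,-\nabla_\G\phi)$.
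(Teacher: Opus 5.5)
Your proposal has a genuine gap in Step 1. Interiority of the minimizer, and with it the identification of the minimizer with a solution of \eqref{eq:mfg-withcH}, is obtained circularly.

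You produce the adjoint $\phi$ from the Euler--Lagrange equations and then feed $A=D_p\cH(\rho,-\nabla_\G\phi)$ into Theorem~\ref{thm:main-interiority}. But several things you use are only available once $\rho$ is known to stay in $\cP_0(\G)$, as your own Step 2 concedes ("With $\rho$ bounded away from $\partial\cP_0(\G)$\dots"):
\begin{itemize}
\item the first-order conditions themselves;
\item differentiability of $\cL$ along the path;
\item admissibility of two-sided perturbations $\rho+rf$.
\end{itemize}
Theorem~\ref{thm:main-interiority} also needs a time-regular coefficient with a uniform bound on $\nabla_\G\phi$, to build the bounded function $h$. That bound is again part of what you are trying to derive.

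The preliminary step "perturb toward the interior" is not carried out. It also appeals to an entropic structure of $\cL$ that is not among the hypotheses of Section~\ref{subsection:assumptions-g}; only the mobility bound \eqref{eq:dec21.2025.1} on $D_p\cH$ is assumed. Even if it worked, ruling out positive-measure boundary contact gives neither $\rho(s)\in\cP_0(\G)$ for every $s$ nor an adjoint on all of $[t,T]$. Your Step 3 presupposes that the minimizer \emph{is} the system solution: both the implicit function theorem on the forward--backward system and $\delta_\mu\cU(t,\mu)=\Pi_{\bR^n_0}\phi^{t,\mu}(t)$ depend on it. So the whole argument rests on this unproved step.

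The missing idea is to reverse the logic, as the paper does in Proposition~\ref{prop:sufficient-min}.
\begin{itemize}
\item First obtain an interior classical solution $(\phi,\rho)$ of \eqref{eq:mfg-withcH} from Theorem~\ref{thm:main-mfg}. Use $H(\mu,p)=D_\mu\cH(\mu,-p)+D_\mu\cF(\mu)$ (your sign is off) and $B(\mu,p)=-D_p\cH(\mu,-p)$. Their hypotheses follow from \eqref{eq:assump2Llower}, \eqref{eq:dec21.2025.1} and Remark~\ref{rem:dec20.2025.1}. Interiority then comes from Theorem~\ref{thm:main-interiority} inside the fixed-point construction, where the Lasry--Lions estimate supplies the a priori bound on $\phi$.
\item Set $m:=D_p\cH(\rho,-\nabla_\G\phi)-\nabla_\G\rho$ and verify minimality directly. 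Use the Legendre identities \eqref{eq:dec17.2025.1}, joint convexity of $\cL$, concavity of $\cF$, convexity of $\cU_T$, and the backward equation. Then integrate by parts against $\dot{\bar\rho}-\dot\rho+\nabla_\G\cdot(\bar m-m)=0$. This gives $\cA_t^T(\bar\rho,\bar m)+\cU_T(\bar\rho(T))\ge\cA_t^T(\rho,m)+\cU_T(\rho(T))$ for every competitor.
\item Uniqueness of the minimizer from the direct method then identifies it with $(\rho,m)$. No necessary conditions near $\partial\cP_0(\G)$ are ever needed.
\end{itemize}
You have this sufficiency computation implicitly ("equivalence with minimizers, via convexity"), but you only use it for uniqueness of the system.

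Once this is repaired, your Step 3 is sound and more direct than the paper's route through subdifferentials, semiconcavity and one-sided time derivatives. Its ingredients are:
\begin{itemize}
\item joint $C^1$ dependence from the implicit function theorem;
\item differentiability of the convex function $\cU(t,\cdot)$ from a one-sided first-order bound along perturbed competitors;
\item $\delta_\mu\cU=\Pi_{\bR^n_0}\phi^{t,\mu}(t)$;
\item the dynamic programming identity, whose running cost should read $\cL(\rho,m+\nabla_\G\rho)-\cF(\rho)$.
\end{itemize}
Together these give the HJB equation and $C^2$ regularity.
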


As an application, we turn to a game-theoretic interpretation of the master equation. In Section~\ref{sec:NE-discreteMFG} we show that the solution $u$ of Theorem~\ref{thm:main-mfg} is the value function of a mean field game in which individual players evolve as continuous-time Markov chains on $\bG$: each player selects their transition rate matrix so as to minimize a cost with running term determined by a Lagrangian $L$ and terminal cost $g$, and the Nash-equilibrium rate matrix is read off from $\nabla_\G u$.

\begin{theorem}[Markov-chain Nash equilibria; informal version of Theorem~\ref{thm:main-nash}]
Let $u$ be the value function of Theorem~\ref{thm:main-mfg} and $v^{0,\mu}$ its associated optimal velocity field. Under suitable structural assumptions on the Lagrangian, if $v^{0,\mu}$ is an admissible control for its own density flow, then $u$ and $v^{0,\mu}$ define a Nash equilibrium for a continuous-time Markov-chain mean field game on $\bG$ whose terminal and running costs are determined by $g$ and $L$.
\end{theorem}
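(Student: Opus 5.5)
The Markov-chain Nash equilibrium statement is informal, and the precise version, Theorem~\ref{thm:main-nash}, comes later. So this is a plan for that precise version. The strategy is a verification argument. We start from the classical solution $u$ of the master equation~\eqref{eq:master} given by Theorem~\ref{thm:main-mfg}, together with its characteristics $(\phi^{0,\mu},\rho^{0,\mu})$. We then interpret $\phi^{0,\mu}(s)=u(s,\rho^{0,\mu}(s))$ as the value of a single player whose state $X_s\in\bV$ is a continuous-time Markov chain. Its rate matrix $Q(s)$ is built from a velocity field $v$ on edges, plus the diffusive jumps at rate $\omega_{ij}$ that come from $\Delta_\G$.

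\textbf{Step 1: generator.} I will check that for a rate field $v$ the law of $X_s$ solves the discrete continuity equation $\dot\rho=\nabla_\G\cdot(\text{flux})+\Delta_\G\rho$. Here the flux is written through $\theta$, or through $B$, as in~\eqref{eq:conti general}. With $v=v^{0,\mu}$ the flux is $B(\rho,\nabla_\G\phi)$, and then uniqueness from Theorem~\ref{thm:main-interiority} identifies the law of the chain with $\rho^{0,\mu}$. The interiority bound~\eqref{eq:rho lower bd} keeps all rates finite, because the divisions by $\rho^i$ in converting flux to rates are controlled. This is exactly why the hypothesis that $v^{0,\mu}$ is admissible for its own density flow enters.

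\textbf{Step 2: Dynkin/Itô formula for the individual value.} Freeze the population flow $\rho^{0,\mu}$ and set $\psi^i(s)=\phi^i(s)$. For any admissible rate $\alpha$ of a deviating player, apply Dynkin's formula to $\psi^{X_s}(s)$:
\[
\bE\,g^{X_T}(\rho(T))=\psi^{X_t}(t)+\bE\int_t^T\Big(\dot\psi^{X_s}+\sum_j \alpha_{X_s j}(\psi^j-\psi^{X_s})+\sum_j\omega_{X_sj}(\psi^j-\psi^{X_s})\Big)ds.
\]
The first equation of~\eqref{eq:mfg} gives $\dot\phi=H(\rho,\nabla_\G\phi)-\Delta_\G\phi$, and the $\omega$-terms cancel the Laplacian. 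This leaves $H$ against the control term.

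\textbf{Step 3: Fenchel inequality.} The structural assumption I will impose is that $H^i(\mu,p)=\sup_\alpha\{-\sum_j\alpha_{ij}\sqrt{\omega_{ij}}p^{ij}-L^i(\mu,\alpha)\}$, with the sup attained at the rate read off from $\nabla_\G u$. Then cost$(\alpha)\ge\psi^{X_t}(t)$, with equality iff $\alpha=v^{0,\mu}$ along the path. This is the Nash property. Consistency with the population, meaning every player using $v^{0,\mu}$ reproduces $\rho^{0,\mu}$, was shown in Step 1.

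\textbf{Main obstacle.} The delicate point is matching the edge flux $B_{ij}$, which is antisymmetric and scaled by $\theta(\mu^i,\mu^j)$, with nonnegative one-directional rates $\alpha_{ij}\rho^i$. A signed flux must be split into its positive parts, and the Lagrangian must be arranged so that its Legendre dual reproduces $H^i$ vertex-wise, rather than the aggregated $\cH$. I will first try the decomposition $\alpha_{ij}=(B_{ij})^+/\rho^i$. I will take the compatibility of this choice with $L$ as the structural assumption. The lower bound~\eqref{eq:rho lower bd} and the $C^1$ regularity of $u$ then justify Dynkin's formula with bounded rates.
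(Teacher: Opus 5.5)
The skeleton of your plan matches the paper: Dynkin's formula for $\phi^{0,\mu}(s)=u(s,\rho^{0,\mu}_s)$ along a deviating player's chain, the diffusive jumps cancelling $\Delta_\G$, a pointwise inequality at the current vertex, and the martingale term vanishing after conditioning. That is the structure of Lemma~\ref{lem:mar12.2026} and the proof of Theorem~\ref{thm:main-nash}.

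The gap is that the pointwise inequality, which is the real content of the theorem, is assumed rather than derived. In Theorem~\ref{thm:main-nash} the running cost is the given $L=D_\mu\bar\cL-D_\mu\cF$, and $H_i(\mu,q)=D_{\mu^i}\cH(\mu,-q)+F_i(\mu)$. After the $F$-terms cancel, you need, at $\mu=\rho^{0,\mu}_s$, $p=-\nabla_\G u(s,\rho^{0,\mu}_s)$ and the current vertex $i$,
\[
D_{\mu^i}\cH(\mu,p)\ \ge\ \sum_{l\in N(i)}\big(w+\nabla_\G\log\mu\big)^{il}p^{il}\,\partial_1\theta(\mu^i,\mu^l)-D_{\mu^i}\bar\cL\big(\mu,w+\nabla_\G\log\mu\big)
\]
for every deviation $w$, with equality at $w=v^{0,\mu}_s$. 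The paper proves this in Lemma~\ref{lem:feb24.2026.2new}, using hypotheses your plan never touches:
\begin{itemize}
\item Differentiating the Fenchel equality $\bar\cL(\mu,\bar v)+\cH(\mu,p)=(\bar v,p)_\mu$ in $\mu^i$ gives equality at $w=v$ (Lemma~\ref{lem:mar03.2026.2}).
\item The unique momentum property keeps $p$ conjugate to $\bar v$ at $\mu+te_i$. Hence $D_{\mu^i}D_{v^{ij}}\bar\cL(\mu,\bar v)=\partial_1\theta(\mu^i,\mu^j)p^{ij}$, so $w=v$ is critical in the edges at $i$.
\item Convexity of $D_{\mu^i}\bar\cL(\mu,\cdot)$ makes this critical point a global minimizer.
\item Edges not touching $i$ change $L_i$ but not the jumps out of $i$. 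They are harmless only under \eqref{eq:march12.2026.3}; otherwise one gets merely a restricted Nash equilibrium. Your plan does not see this dichotomy.
\end{itemize}

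The same structure shows why $\alpha_{ij}=(B_{ij})^+/\rho^i$ cannot work. Suppose $H_i$ is a vertex-wise supremum over jump intensities, attained at the equilibrium ones. Since $H_i$ is smooth in $p$, the envelope theorem forces the maximizer to be the $p$-gradient of $H_i$. Under unique momentum this gradient is $-\sqrt{\omega_{ij}}\,\partial_1\theta(\rho^i,\rho^j)\bar v^{ij}$. At $\rho^i=\rho^j$ it equals $-\frac{1}{2}\sqrt{\omega_{ij}}\,\bar v^{ij}$. Your split, with the weight $\sqrt{\omega_{ij}}$ of $\nabla_\G\cdot$ restored and using $B_{ij}=-\theta_{ij}\bar v^{ij}$, gives $\sqrt{\omega_{ij}}\,(-\bar v^{ij})^+$ instead. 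So your compatibility assumption fails for the paper's model class.

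The correct rates are
\[
Q^{ij}[v|\rho_*]=\omega_{ij}-\sqrt{\omega_{ij}}\,\partial_1\theta(\rho_*^i,\rho_*^j)(v+\nabla_\G\log\rho_*)^{ij}.
\]
This is a signed perturbation of the diffusive rate, and it reproduces the population flux through the Euler identity $r\partial_1\theta(r,s)+s\partial_2\theta(r,s)=\theta(r,s)$. Consistency of the chain's law with $\rho^{0,\mu}$ then follows from uniqueness for the linear equation $\dot\rho=\rho\,Q$, not from Theorem~\ref{thm:main-interiority}.

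You also misread admissibility. It means exactly $Q^{ij}[v|\rho_*]\ge 0$ and has nothing to do with finiteness of rates, which is automatic since $\rho^{0,\mu}$ is interior. Under your positive-part split it would be vacuous, which is further evidence that the construction does not match the statement.
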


Admissibility amounts to a non-negativity condition on the off-diagonal entries of the associated rate matrix, and is expected to hold on sufficiently fine nearest-neighbor discretizations of $\mathbb T^d$; see Remark~\ref{rmk:admissibility-torus}.

Theorem~\ref{thm:main-interiority} is proved in Section~\ref{sec:interiority} by combining a spatial propagation-of-smallness principle (Lemma~\ref{lem:interiority 1}) with a waiting-time estimate (Lemma~\ref{lem:interiority 2}): the former propagates smallness along shortest paths in $\bG$, the latter provides a uniform lower bound on the time to halve the minimum coordinate.

Section~\ref{sec:classical} proves Theorem~\ref{thm:main-mfg}. A priori bounds for the homotopic family~\eqref{eq:mfg lambda}, obtained by combining a Lasry--Lions duality identity with the interiority estimate of Theorem~\ref{thm:main-interiority}, feed into Schaefer's fixed point theorem to yield classical solutions of~\eqref{eq:mfg}; uniqueness follows from discrete Lasry--Lions monotonicity. A $C^1$-dependence-on-initial-data analysis of the flow $(t,\mu)\mapsto(\phi^{t,\mu},\rho^{t,\mu})$ (Proposition~\ref{prop:C1-dependence}) then upgrades this to a classical solution of~\eqref{eq:master}.

Section~\ref{sec:ClassicalHJE} proves Theorem~\ref{thm:main-hjb}. Direct methods yield a unique minimizer for $\cA_t^T$ (Proposition~\ref{prop:action-minimizer}), convex duality combined with Theorem~\ref{thm:main-mfg} characterizes it as the solution of~\eqref{eq:mfg-withcH} (Proposition~\ref{prop:sufficient-min}), and subdifferential analysis gives the gradient identity along the minimizer (Proposition~\ref{thm:main2}), establishing part~(i). For part~(ii), spatial semiconcavity (Proposition~\ref{thm:semi-concave}) together with a time-differentiability argument (Proposition~\ref{prop:1differential}) yields the HJB equation, and a bootstrap through Theorem~\ref{thm:main-mfg} upgrades $\cU$ to $C^2$.

Section~\ref{sec:NE-discreteMFG} proves Theorem~\ref{thm:main-nash}. A variational characterization of $D_{\mu^i}\cH$ as a supremum over velocity perturbations (Lemma~\ref{lem:feb24.2026.2new}), valid under a unique momentum assumption on $(\bar\cL,\cH)$ (see Definition~\ref{defn:feb24.2026.1}), feeds into a martingale verification argument (Lemma~\ref{lem:mar12.2026}) that compares the value function $u$ along a player's Markov-chain trajectory with the cost of any admissible deviation.

\subsection{Notations and preliminaries}\label{sec:setup}
Let $G=(\bV, \bE, \omega)$ denote an undirected graph with vertices $\bV=\{1, \cdots, n\}$ and edges $\bE$, endowed with a ``weight'' $\omega=(\omega_{ij})_{ij}$, i.e. a $n \times n$ symmetric matrix with nonnegative entries $\omega_{ij}$ such that $\omega_{ij}>0$ if and only if $(i,j) \in \bE.$ For simplicity, assume that the graph is connected and simple, with no self-loops or multiple edges. We identify $\bV$ with $\{e_1, \cdots, e_n\},$ the canonical orthonormal basis in $\bR^n$ and for $i \in \bV$, $N(i)$ is the set of $j \in \bV$ such that $(i, j) \in \bE$. Let 
\begin{equation}\label{eq:defn-omega-max-min}
    \omega_{\min}:=\min_{(i,j)\in \bE}\omega_{ij}, \quad \omega_{\max}=\max_{(i,j)\in \bE}\omega _{ij}.
\end{equation}

In standard notation, an $n$--dimensional probability vector is a row vector in $\bR^n$ which we denote by $\mu=(\mu^1, \cdots, \mu^n).$ We denote by $\cP(\bG)$ the probability simplex on $\bG$ and for $\epsilon\geq 0$, we set
\[
\cP_\epsilon(\bG):=\big\{\mu \in \cP(\bG): \mu^i > \epsilon \;\; \forall i \in \bV\big\}.
\]
Let $\bS(n)$ denote the set of skew-symmetric $n\times n$ matrices. The divergence of $(m^{ij})_{ij} \in \bS(n)$ on $\bG$ is the row vector $\nabla_\bG \cdot m \in \bR^n$ defined by $(\nabla_\bG \cdot m)^i=\sum_{j \not= i}^n \sqrt{\omega_{ij}} m^{ji}.$ If $\tilde m \in \bS(n)$, we set
\[
(m,\tilde m ):=\frac{1}{2}\sum_{(i,j)\in \bE} m^{ij} \tilde m^{ij}, \quad \|m\|^2=(m,m ).  
\] 
Here the coefficient $1/2$ accounts for the fact that whenever $(i,j) \in \bE$, then $(j,i) \in \bE$. If $u \in \bR^n,$ then $\nabla_\bG u \in \bR^{n \times n}$ is the skew-symmetric matrix defined by 
\[
(\nabla_\bG u)^{ij}=\sqrt{\omega_{ij}} (u^i-u^j), \qquad \forall i, j \in \bV.
\]
We have the integration by parts formula 
\begin{equation}\label{eq:integration by parts formula} \big(\nabla_\bG \cdot m, u\big)=-\big(m, \nabla_\bG u\big),\quad \forall m \in \mathbb{S}(n),\,\forall u \in \mathbb{R}^n.\end{equation} 
A function $K: \bS(n) \to \bR$ is differentiable at $m^0\in \bS(n)$ if there exists $P \in \bS(n)$ such that 
\[
K(m)-K(m^0)-(m-m^0, P)=o\big(\|m-m^0\|_{\ell_2}\big).
\]
We call $P$ the gradient of $K$ at $m^0$ and write $P=D_m K(m^0).$ Note that since $K$ is not defined on the whole space $\bR^{n \times n},$ its partial derivatives $D_{m^{ij}} K(m)$ at $m$ do not exist per se. However, we adopt the notation
\[
D_{m^{ij}} K(m):=\big(D_mK(m)\big)^{ij}, \qquad \forall (i, j) \in \bE, \; i<j.
\]
We then have that 
\[
K(m)-K(m^0)-\sum_{i<j} D_{m^{ij}} K\big(m^0\big) \big(m^{ij}-m_0^{ij}\big)=o\big(\|m-m^0\|_{\ell_2}\big).
\]
We denote by ${\bf 1} $ the element of $\bR^n$ whose components are all equal to $1$. The tangent space of $\cP_0(\bG)$ at $\mu$ and the orthogonal projection $\Pi_{\bR^n_0}$ from $\bR^n$ onto the tangent space are respectively denoted by 
$$
\bR^n_0:=\Big\{V \in \bR^n:\; \sum_{i=1}^n V^i=0 \Big\}, \qquad \Pi_{\bR^n_0}: V \to V-{{\bf 1} \over n} \sum_{i=1}^n V^i.
$$ 
Given a function $\cF: \cP_0(\bG) \to \bR$, we denote by $\delta_{\mu} \cF(\mu)$ the Fr\'echet derivative of $\cF$ on $\cP_0(\bG)$ at $\mu$, and thus $\delta_{\mu} \cF(\mu) \in \bR^n_0$. If $\cF$ admits an extension, which we continue to denote by $\cF:[0,1]^n \to \bR$, and the extension has a Fr\'echet derivative $D_{\mu} \cF(\mu)$ on $(0,1)^n$ at $\mu \in (0,1)^n$ then 
\begin{equation}\label{eq:jan22.2026}
\delta_{\mu} \cF=\Pi_{\bR^n_0}\Big( D_\mu \cF\Big). 
\end{equation}
We shall denote the components of $\delta_{\mu} \cF$ by $\delta_{\mu^1} \cF, \cdots, \delta_{\mu^n} \cF$ and the components of $D_\mu \cF$ by $D_{\mu^1} \cF, \cdots, D_{\mu^n} \cF.$

The logarithmic-mean activation functional $\theta$ discussed in the introduction is defined more precisely by
\[
\theta(r, s)= \left\{
     \begin{array}{ll}
       {r-s \over \log r-\log s}& \text{if }  r \neq s, rs>0  \\
        r & \text{if } r =s>0\\
       0  & \text{if }  rs=0
     \end{array}
   \right..
\]
Related proposals for discrete Wasserstein metrics on graphs appear in \cite{EM2, GangboLM, GangboMSwiech, Maas}. Given $\rho = (\rho^1,\dots, \rho^n)\in [0,+\infty)^n$, we set $\theta_{ij}(\rho):=\theta(\rho^i, \rho^j)$. The $\rho$--divergence of $v \in \bS(n)$ is $\nabla_\rho \cdot v$ defined by $(\nabla_\rho \cdot v)^i=\sum_{j \not= i}^n \sqrt{\omega_{ij}} \theta_{ij}(\rho) v^{ji}.$ We shall work with the discrete {\em bilinear form} and {\em semi-norm}  
\begin{equation}\label{eq:inner-product-and-norm}
(v,\tilde v )_ \rho:=\frac{1}{2}\sum_{(i,j)\in \bE} v^{ij}\tilde v^{ij}\theta_{ij}(\rho), \quad \|v\|_\rho=\sqrt{(v,v )_ \rho } \quad \forall \; v, \tilde v \in \bS(n).
\end{equation}
We define
\begin{equation}\label{eq:definitionfij}
f^0(a, b)=\left\{
\begin{array}
[c]{cl}
0, &\;\; \text{if}\;\;a=0, \;\; b=0; \smallskip\\
+\infty, &\;\; \text{if}\;\;a=0, \;\; b\not =0; \smallskip\\
\frac{b^2}{2a}, &\;\; \text{if}\;\;a>0,
\end{array}
\right. 
\end{equation} 
which is useful for introducing the discrete kinetic energy functional  
\[
f^\theta_{ij}(\mu, m)=f^0(\theta_{ij}(\mu), m^{ij}),\quad (\mu, m) \in \cP(\bG) \times \bS(n).
\] 
A class of model Lagrangians / Hamiltonians $\cL_\theta, \cH_\theta: [0,+\infty)^n \times \bS(n) \to [0,+\infty]$ is given by
\begin{equation}\label{eq:jan28.2024.1ter}
\cL_\theta(\mu, m)= \frac{1}{2} \sum_{(i, j) \in \bE}f^\theta_{ij}(\mu, m), \qquad \cH_\theta(\mu, p)=\frac{1}{4} \sum_{(i, j) \in \bE}\theta_{ij}(\mu) (p^{ij})^2.
\end{equation} 
Note that $\cL_\theta$ is convex and lower semicontinuous and $\cH_\theta(\mu, \cdot)$ is the Legendre transform of $\cL_\theta(\mu, \cdot)$. 
When $\mu \in (0,1)^n$ and $p, m \in \bS(n)$,   
\[
D_{m^{ij}}  \cL_\theta(\mu, m)= {m^{ij} \over \theta_{ij}(\mu)}, \qquad D_{p^{ij} } \cH_\theta(\mu, p)=\theta_{ij}p^{ij}
\]
and thus 
\begin{equation}\label{eq:dec21.2025.2}
|D_{p^{ij} } \cH_\theta(\mu, p)| =(\mu^i+\mu^j) h_{\log}\bigg({\mu^j \over \mu^i} \bigg)|p^{ij}|,
\end{equation}
where $ h_{\log} \in C((0,+\infty))$ is defined by $(1+u) |\log u|\; h_{\log}(u)= |u-1|.$

%
%
\subsection{Actions and individual noise operator}\label{subsection:feb14.2026}
Fix a convex Lagrangian $\cL:[0,+\infty)^n\times\bS(n)\to\bR\cup\{+\infty\}$, a running cost $\cF:[0,1]^n\to\bR$, a terminal time $T>0$, and a terminal cost $\cU_T:[0,1]^n\to\bR$; the precise structural assumptions on $(\cL,\cF,\cU_T)$ invoked by Theorem~\ref{thm:main-hjb} are collected in Section~\ref{subsection:assumptions-g}. For curves $\rho:[t,T]\to \mP(\mathbb{G})$ and $m:[t,T]\to \mathbb{S}(n)$, we define the action
\[
\cA_t^T(\rho,  m):= \int_t^T  \Big(\cL(\rho,  m+  \nabla_\bG \rho) -\cF(\rho)\Big)ds.
\]
We aim to minimize $\cA_t^T(\rho,  m)+ \cU_T(\rho(T))$
over the set of $(\rho, m)$ such that $\rho \in C([t, T], \cP(\bG))$, $\rho(t)=\mu$, $m \in L^{p_0}\big((t, T); \bS(n)\big)$ for some $p_0>1$, and 
\begin{equation}\label{eq:may-continuity.ter}
\dot \rho+\nabla_\bG \cdot m=0,
\end{equation} 
in the sense of distributions. 
For such pairs $(\rho,m)$, we write $(\rho, m) \in \cC_t^T(\mu, \cdot)$ and set
\begin{equation}\label{eq:aug01.2025.2}
\cU(t, \mu):= \inf_{\rho, m} \bigg\{ \cA_{t}^{T}(\rho, m) + \cU_T(\rho(T)):\: (\rho, m) \in \cC_t^T(\mu, \cdot) \bigg\}.
\end{equation}

Inspired by the individual noise in mean field games (\cite{ChowGan, GangboM22, GangboMMZ22}), we define the operator $\Delta_\text{ind}$ which, when applied to the Wasserstein gradient $\nabla_{\mW}\mathcal{V} = \nabla_\mathbb{G}\delta_\mu\mathcal{V}$ of a smooth function $\cV: \cP(\bG) \to \bR$, is given by
\begin{equation}\label{eq:sep06.2025.2}
(\Delta_\text{ind} \cV)(\mu)= \big(\nabla_\bG\cdot (\nabla_{\cW}\cV)(\mu), \mu\big)=-\big(\nabla_{\cW}\cV(\mu), \nabla_\bG \mu\big)=-\big(\nabla_{\cW}\cV(\mu), \nabla_\bG \log \mu\big)_\mu.
\end{equation}

%
%
%
%
\section{Interiority estimates  for the continuity equation}\label{sec:interiority}
Throughout this section, $h$, $A$ and the continuity equation~\eqref{eq:conti general} are as in Theorem~\ref{thm:main-interiority}; in particular, $\sum_{i=1}^n\rho_i\equiv1$ since $A$ is skew-symmetric. We prove Theorem~\ref{thm:main-interiority} via two lemmas of independent interest.
\begin{lemma}[Spatial propagation of smallness]\label{lem:interiority 1} For $\epsilon>0$, let $\mu \in \cP_\epsilon(\bG)$, $t_0 \in (0,T]$, and assume that $\rho\in C^1([0,t_0]; (0,1)^n)$ is a classical solution to \eqref{eq:conti general}. There exists a constant $K>1$, depending only on $n$, $\omega_{\min}$, $\omega_{\max}$, and $h$, such that the following holds: if $\delta \in (0,\frac{\epsilon}{K})$, $i_0\in \{1,\ldots,n\}$, and $t_0$ is the first time such that $ \rho_{i_0}(t_0)=\delta,$ 
then
\begin{equation}
    \min_{s\in[0,t_0]}\rho_i(s)\leq K\delta \;\text{ for all }\; i\in\{1,\ldots,n\}.
\end{equation}
\end{lemma}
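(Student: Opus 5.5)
The approach is a comparison argument at the first hitting time, which bounds the neighbors of $i_0$ in terms of $\delta$; iterating along shortest paths in $\bG$ then reaches every vertex. First I write \eqref{eq:conti general} componentwise. Since $(\Delta_\G\rho)_i=\sum_{j\in N(i)}\omega_{ij}(\rho_j-\rho_i)$ and $(\nabla_\G\cdot A)_i=\sum_{j\in N(i)}\sqrt{\omega_{ij}}A_{ji}$, we get
\[
\dot\rho_i=\sum_{j\in N(i)}\Big(\sqrt{\omega_{ij}}A_{ji}(s,\rho)+\omega_{ij}(\rho_j-\rho_i)\Big).
\]
Now let $t_0$ be the first time with $\rho_{i_0}(t_0)=\delta$. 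Because $\rho_{i_0}(0)>\epsilon>\delta$, $\rho_{i_0}$ is strictly above $\delta$ on $[0,t_0)$. Hence $\dot\rho_{i_0}(t_0)\le 0$. Using \eqref{as:A conti}, this gives
\[
0\ \ge\ \sum_{j\in N(i_0)}\Big(\omega_{\min}\,(\rho_j(t_0)-\delta)\mathbf 1_{\rho_j\ge\delta}-\omega_{\max}\delta\,\mathbf 1_{\rho_j<\delta}-\sqrt{\omega_{\max}}\,(\delta+\rho_j(t_0))\,h(\rho_j(t_0)/\delta)\Big).
\]
In the first term I use $\omega_{\min}$ only where $\rho_j\ge\delta$, and bound the negative Laplacian terms by $\omega_{\max}\delta$.

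Next I divide by $\delta$ and set $u_j:=\rho_j(t_0)/\delta$. Terms with $u_j<1$ are bounded below by $-C_0:=-\omega_{\max}-2\sqrt{\omega_{\max}}\|h\|_\infty$. For $u\ge1$ I introduce
\[
\psi(u):=\omega_{\min}(u-1)-\sqrt{\omega_{\max}}(1+u)h(u).
\]
Because $h$ is bounded and $h(u)\to0$ as $u\to\infty$ by \eqref{eq:h properties}, $\psi$ is bounded below by some $-C_0'$ on $[1,\infty)$ and $\psi(u)\to+\infty$. Combining, every neighbor $j$ with $u_j\ge1$ satisfies $\psi(u_j)\le n\max(C_0,C_0')$. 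Hence $u_j\le K_1$ for a constant $K_1>1$ depending only on $n,\omega_{\min},\omega_{\max},h$. Thus $\rho_j(t_0)\le K_1\delta$ for every $j\in N(i_0)$.

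To propagate, I take $K:=K_1^{\,n}$, so that any $\delta<\epsilon/K$ satisfies $K_1^k\delta<\epsilon$ for all $k\le n-1$. Fix a neighbor $j$ of $i_0$. Since $\rho_j(0)>\epsilon>K_1\delta\ge\rho_j(t_0)$, the intermediate value theorem gives a first time $t_1\le t_0$ with $\rho_j(t_1)=K_1\delta$. The hypotheses of the first step hold with $(\delta,i_0,t_0)$ replaced by $(K_1\delta,j,t_1)$, and the restriction of $\rho$ to $[0,t_1]$ is still a classical solution. So every neighbor of $j$ satisfies $\rho_k(t_2)\le K_1^2\delta$ at some $t_2\le t_1\le t_0$. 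Inducting along a shortest path from $i_0$ to an arbitrary vertex $i$ (length at most $n-1$, by connectedness) yields a time $s\le t_0$ with $\rho_i(s)\le K_1^{n-1}\delta\le K\delta$. This is the claimed bound.

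The main obstacle is the first step: controlling the flux term $|A_{ji}|$ when the neighbor is much larger than $\delta$. This is exactly where the decay $h(u)\to0$ as $u\to\infty$ enters, guaranteeing that diffusion (linear growth in $u$) dominates the flux $(1+u)h(u)$. Boundedness of $h$ handles the regime where neighbors are comparable to or smaller than $\delta$.
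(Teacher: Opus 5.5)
Your proof is correct and follows essentially the same route as the paper: at the first hitting time, $\dot\rho_{i_0}(t_0)\le 0$ together with $h(u)\to 0$ as $u\to\infty$ forces the linear diffusion to dominate the flux, so every neighbor is $O(\delta)$, and this bound is then propagated along shortest paths with a geometric constant. The only difference is cosmetic. You restart the argument at the first hitting time of the level $K_1\delta$, which matches the hypotheses of the first step exactly. The paper instead restarts at an argmin of $\rho_{i_1}$ on $[0,t_0]$, and it splits neighbors at the threshold $C_0\delta$ rather than through your function $\psi$.
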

\begin{proof} Let $C_0>1$ be a constant to be chosen later. Since $\delta<\epsilon/K<\epsilon$, then $t_0>0$ and $\dot \rho_{i_0}(t_0)\leq 0$. Below, we let $C$ denote a generic constant that may increase with each line and depends on $n$, $\omega_{\min}$, $\omega_{\max}$, and $\|h\|_{\infty}$, but does not depend on $C_0$ or $\delta$.  Using \eqref{as:A conti},
\[
 0\geq\dot \rho_{i_0}(t_0)=\sum_{j} \sqrt{\omega_{i_0j}}A_{j i_0}+\sum_j (\rho_j-\rho_{i_0}) \omega_{i_0j} 
 \geq  -CC_0\delta+\sum_{\rho_j>C_0\delta} \sqrt{\omega_{i_0j}}A_{j i_0}+\sum_{\rho_j>C_0\delta} (\rho_j-\rho_{i_0})\omega_{i_0j}
\]
and so, 
\[
0 \geq -CC_0\delta - \sum_{\rho_j>C_0\delta}\sqrt{\omega_{i_0 j}}(\rho_{i_0}+\rho_{j})h(\rho_j/\rho_{i_0}) +\sum_{\rho_j>C_0\delta} (\rho_j-\rho_{i_0})\omega_{i_0j}.
\]
We conclude that  
\[
0 \geq -CC_0\delta - \sum_{\rho_j>C_0\delta}\sqrt{\omega_{i_0 j}}\rho_{j}h(\rho_j/\delta) +\sum_{\rho_j>C_0\delta} \rho_j\omega_{i_0j}=
-CC_0\delta+\sum_{\rho_j>C_0\delta}\sqrt{\omega_{i_0 j}}\rho_j(\sqrt{\omega_{i_0 j}}-h(\rho_j/\delta)).
\]
 Hence, recalling \eqref{eq:h properties}, we may fix $C_0=C_0(h,\omega_{\min})$ large enough so that $\sqrt{\omega_{\min}}-h(u)>\frac{1}{2}\sqrt{\omega_{\min}}$ for $u>C_0$.
We conclude that 
\begin{equation}\label{eq:dec08.2025.1}
   \sum_{\rho_j>C_0\delta} \rho_j\omega_{i_0j}\leq CC_0\delta.
\end{equation}
Let $j$ be a neighbor of $i_0$. Either $\rho_j(t_0)\leq C_0\delta$, and thus $ \min_{[0,t_0]}\rho_j \leq C_0\delta$, or else $\rho_j(t_0)>C_0\delta$, and thus  
\[
 \min_{[0,t_0]}\rho_j \leq \rho_j(t_0) \leq  \sum_{\rho_j>C_0\delta} \rho_j(t_0) \leq {CC_0\delta \over \omega_{\min}}
\]
In conclusion, for a constant $C$ that now depends on $\omega_{\min}$ and for every neighbor $j$ of $i_0$, 
\[
\min_{s\in[0,t_0]}\rho_j(s) \leq \rho_j(t_0)\leq CC_0\delta.
\] 
We now denote by $C_*>1$ the current value of $C$ and fix the choice $K=(C_*C_0)^{n-1}$. Let $i_1$ be a neighbor of $i_0$. Repeating the same argument with $(i_0,\delta,t_0)$ replaced by $(i_1,C_*C_0\delta, \bar{t}_0)$, where $\bar{t}_0\in\operatorname{argmin}_{s\in[0,t_0]}\rho_{i_1}(s)$, we infer that
\[\min_{s\in[0,t_0]}\rho_j(s) \leq \rho_j(\overline{t}_0)\leq(C_*C_0)^2\delta\] for every neighbor $j$ of $i_1$. Proceeding in this way, since $\G$ is connected, we conclude after at most $n-1$ iterations along a shortest path from $i_0$ to any vertex $i$ that
\[    \min_{s\in[0,t_0]}\rho_i(s) \leq (C_*C_0)^{n-1}\delta\leq K\delta, \quad i\in\{1,\ldots,n\}.\]
This concludes the proof. \end{proof}
\begin{lemma}[Waiting time estimate] \label{lem:interiority 2} Let $\mu \in \cP_\epsilon(\bG)$, $0< t_0<t_1\leq T$, and assume that $\rho\in C^1([0,t_1]; (0,1)^n)$ is a classical solution to \eqref{eq:conti general}.  There exist constants $K>1$ and $c_0>0$, depending on $n$, $\omega_{\min}$, $\omega_{\max}$ and $h$, such that the following holds: if $\delta \in (0,\frac{\epsilon}{K})$ and $t_0$, $t_1$ are, respectively, the  first times such that
\begin{equation}
    \min_{i}\rho_i(t_0)=\delta, \;\;\min_{i}\rho_i(t_1)=\frac{1}{2K}\delta, 
\end{equation}
then $t_1-t_0\geq c_0.$
\end{lemma}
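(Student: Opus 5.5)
The plan is to prove a pointwise differential inequality $\dot\rho_i\geq -C\rho_i$ for every coordinate, with $C$ depending only on $n$, $\omega_{\min}$, $\omega_{\max}$ and $h$, and then integrate it from $t_0$. Once we have this, Lemma~\ref{lem:interiority 1} is not actually needed. We may take $K$ to be the constant of Lemma~\ref{lem:interiority 1}, so that the two lemmas can later be chained in the proof of Theorem~\ref{thm:main-interiority}, or any $K>1$ at all.

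\emph{Step 1 (linear lower bound on the drift).} Fix $i$ and $s\in[0,t_1]$, and write $u_j:=\rho_j/\rho_i$. Using \eqref{eq:conti general} and \eqref{as:A conti},
\[
\dot\rho_i\geq \sum_{j\in N(i)}\Big(-\sqrt{\omega_{ij}}(\rho_i+\rho_j)h(u_j)+\omega_{ij}(\rho_j-\rho_i)\Big).
\]
As in the proof of Lemma~\ref{lem:interiority 1}, use \eqref{eq:h properties} to fix $C_0=C_0(h,\omega_{\min})$ such that $h(u)\leq \tfrac12\sqrt{\omega_{\min}}$ for $u>C_0$. Split the neighbors into two groups.
\begin{itemize}
\item If $u_j>C_0$, the term $-\sqrt{\omega_{ij}}\rho_j h(u_j)+\omega_{ij}\rho_j$ is nonnegative, because $\omega_{ij}-\sqrt{\omega_{ij}}h(u_j)\geq \sqrt{\omega_{ij}}\big(\sqrt{\omega_{ij}}-\tfrac12\sqrt{\omega_{\min}}\big)>0$. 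The leftover part is bounded below by $-(\sqrt{\omega_{\max}}\|h\|_\infty+\omega_{\max})\rho_i$.
\item If $u_j\leq C_0$, then $\rho_j\leq C_0\rho_i$, so the whole term is bounded below by $-\big(\sqrt{\omega_{\max}}(1+C_0)\|h\|_\infty+\omega_{\max}\big)\rho_i$.
\end{itemize}
Summing over at most $n-1$ neighbors gives $\dot\rho_i\geq -C\rho_i$ with $C=C(n,\omega_{\min},\omega_{\max},h)$.

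\emph{Step 2 (Gr\"onwall).} By definition of $t_0$, $\rho_i(t_0)\geq\min_k\rho_k(t_0)=\delta$ for every $i$. Step 1 then gives $\rho_i(s)\geq \delta e^{-C(s-t_0)}$ for all $s\in[t_0,t_1]$ and all $i$. At $s=t_1$ some coordinate equals $\delta/(2K)$, so $e^{-C(t_1-t_0)}\leq 1/(2K)$, that is,
\[
t_1-t_0\geq c_0:=\frac{\log(2K)}{C}>0.
\]

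The only delicate point is Step 1. The product $u\,h(u)$ need not be bounded as $u\to\infty$, so the flux contribution from a much heavier neighbor cannot be controlled by $\rho_i$ directly. Instead it has to be absorbed by the diffusion term $\omega_{ij}\rho_j$, and this is exactly where the decay $h(u)\to0$ at infinity enters. The decay at $0$ is not needed here; bounded $h$ suffices for the light neighbors.
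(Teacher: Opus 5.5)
Your proof is correct, and it takes a genuinely different route from the paper.

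The paper argues in three steps:
\begin{enumerate}
\item It applies Lemma~\ref{lem:interiority 1} at the hitting time $t_1$, with $\delta$ replaced by $\delta/(2K)$, to conclude that every coordinate dips below $\delta/2$ somewhere in $[t_0,t_1]$.
\item It uses mass conservation to pick a vertex with $\rho_i(t_0)\geq 1/n$.
\item It uses the uniform speed bound $|\dot\rho_i|\leq C$, which comes from boundedness of $h$ and $\rho\in(0,1)^n$, to show that this vertex needs time at least $1/(2nC)$ to fall that far.
\end{enumerate}

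You take the heavy/light neighbor splitting, which the paper uses only at a first hitting time inside the proof of Lemma~\ref{lem:interiority 1}. You turn it into a pointwise inequality $\dot\rho_i\geq -C\rho_i$ valid at every time, absorbing the heavy-neighbor flux into the diffusion term via $h(u)\to 0$ as $u\to\infty$, and then apply Gr\"onwall. I checked both cases of your splitting and the summation, and they are right, with
\[
C=(n-1)\big(\sqrt{\omega_{\max}}(1+C_0)\|h\|_\infty+\omega_{\max}\big).
\]

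Your route is more elementary in several ways:
\begin{itemize}
\item It needs no propagation along graph paths.
\item It needs neither mass conservation nor the upper bound $\rho\leq 1$.
\item It involves no first-hitting-time bookkeeping.
\item Its constants are explicit and polynomial in $n$, rather than passing through $K=(C_*C_0)^{n-1}$.
\item Your waiting time $\log(2K)/C$ scales with the logarithm of the drop ratio, which is the natural scaling; the paper's $1/(2nC)$ is insensitive to it.
\end{itemize}

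Your argument also proves more than the lemma. Integrating $\dot\rho_i\geq -C\rho_i$ from $s=0$ gives $\min_i\rho_i(s)\geq \epsilon e^{-Cs}$ directly. This is Theorem~\ref{thm:main-interiority} with $c=1$ and $r=C$, and it makes both Lemma~\ref{lem:interiority 1} and the iteration in the theorem's proof unnecessary. Global existence on $[0,T]$ then follows because the lower bound, together with $\sum_i\rho_i=1$, keeps $\rho$ in a compact subset of $(0,1)^n$.

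What the paper's approach offers in exchange is the propagation-of-smallness statement itself, which has some independent interest. For the waiting-time estimate and the interiority theorem, your argument is strictly simpler.
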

\begin{proof}
Let $K$ be the constant of Lemma \ref{lem:interiority 1}. By definition of $t_0$, $\rho_i(s) > \delta$ for all $s\in[0,t_0)$ and all $i \in \{1,\ldots,n\}$. Thus, by Lemma \ref{lem:interiority 1} applied to $i_0\in\operatorname{argmin}_{i}\rho_i(t_1)$ with $\delta$ replaced by $\frac{\delta}{2K}$,
\begin{equation}
\min_{s\in[t_0,t_1]}\rho_i(s)=\min_{s\in[0,t_1]}\rho_i(s)\leq \frac{\delta}{2}\;\;\text{ for every }\; i\in \{1,\ldots,n\}. 
\end{equation}
Since the continuity equation preserves mass, $\sum_{i}\rho_i(t_0)=1$, so $n\delta\leq 1$ and there exists a vertex $i$ such that $\rho_i(t_0)\geq \frac{1}{n}$. On the other hand, since $h$ is bounded, the continuity equation implies that
\begin{equation}
\bigl|\dot{\rho}_i\bigr|
\le \sum_{j}|A_{ij}|\,\sqrt{\omega_{ij}}
    + \sum_{j}|\rho_j-\rho_i|\,\omega_{ij}
\le 2\|h\|_{\infty}\sum_{j}\sqrt{\omega_{ij}}
    + 2\sum_{j}\omega_{ij}
\le C.
\end{equation}
Therefore, letting $t_2\in \operatorname{argmin}_{s\in[t_0,t_1]}\rho_i(s)$ and using the fact that $\delta \leq \frac{1}{n}$, we get
\begin{equation}
  \frac{1}{2n}\leq\frac{1}{n}-\frac{\delta}{2}\leq  \rho_i(t_0)-\rho_i(t_2)\leq C|t_0-t_2|\leq C(t_1-t_0).  
\end{equation}
This proves the claim for $c_0:=1/(2nC)$.
\end{proof}
\begin{proof}[Proof of Theorem~\ref{thm:main-interiority}]
By standard ODE theory, since $\min_{1\leq i\leq n}\mu^i\geq \epsilon>0$ and $A \in C^{1}((0,T)\times (0,1)^n;\bS(n))$, local existence for \eqref{eq:conti general} holds and the unique solution can be continued (with strictly positive components) until a maximal time $T_{\max}\in (0, T]$. Moreover, we must have
\begin{equation}
    T_{\max}=T\;\; \text{ or } \;\liminf_{t\uparrow T_{\max}}\min_{i}\rho_i(t)=0,
\end{equation}
since otherwise the existence interval could be extended. 

Let $\delta_0=\frac{\epsilon}{2K}$, where $K>1$ is the constant of Lemma \ref{lem:interiority 2}. If $\min_{1\leq i\leq n,\; s\in [0,T_{\max})}\rho_i(s) \geq \frac{\delta_0}{2K}$, then $T_{\max}=T$ and there is nothing to prove. Otherwise, by Lemma \ref{lem:interiority 2}, if $t_1$ is the first time such that $\min_{i}\rho(t_1)=\frac{1}{2K}\delta_0$, then
\begin{equation}
    t_1\geq c_0>0.
\end{equation}
If $\min_{i,s}\rho_i(s) \geq \frac{\delta_0}{(2K)^2}$, then again we are done. Otherwise, applying Lemma \ref{lem:interiority 2} again, if $t_2$ is the first time after $t_1$ such that $\min_{i}\rho_i(t_2)=\frac{1}{(2K)^2}\delta_0$, then since $t_2-t_1\ge c_0$ we have 
\begin{equation}
    t_2 \geq 2c_0.
\end{equation}
Continuing in this way, and letting $t_0:=0$, we obtain a sequence $t_0<t_1<\cdots <t_j$ such that
\begin{equation}
    t_j-t_0\geq jc_0, \quad \min_{1\leq i\leq n, s\in [0,t_j]}\rho_i(s)=\min_{1\leq i\leq n}\rho_i(t_j)=\frac{\delta_0}{(2K)^{j}}.
\end{equation}
After finitely many steps, we must have $(j+1)c_0> T$, and therefore, by a final application of Lemma \ref{lem:interiority 2}, we conclude that $T_{\max}=T$, and
\begin{equation}
 \min_{1\leq i\leq n, s\in [0,T]}\rho_i(s)>  \frac{\delta_0}{(2K)^{(j+1)}}=\frac{\epsilon}{(2K)^{(j+2)}}. 
\end{equation}
By the optimality of $j$, one has $jc_0\leq T$, and so
\begin{equation}
 \min_{1\leq i\leq n, s\in [0,T]}\rho_i(s)\geq c\epsilon e^{-rT}, \quad r=\frac{\log(2K)}{c_0}, \quad  c=\frac{1}{(2K)^2}.
\end{equation}
The precise estimate \eqref{eq:rho lower bd} then follows by applying the above argument to the restriction of $\rho$ to the interval $[0,t]$.
\end{proof}

%
%
%
\section{The extended MFG system and the master equation}\label{sec:classical}
Throughout this section, the hypotheses on $H$, $B$, $g$, and the auxiliary function $a$ are those of Theorem~\ref{thm:main-mfg}. For later referencing, we record them here: for all $(\mu,p)\in(0,1)^n\times\bS(n)$ and $(i,j)\in\bE$,
\begin{align}
 (B(\mu,p),p)&\geq (H(\mu,p),\mu)-C_1, \label{as:B coercivity}\\
 H_i(\mu,p)&\geq -C_1, \label{as:H lower bound}\\
 |B_{ij}(\mu,p)|&\leq (\mu^i+\mu^j)\,a(\mu^j/\mu^i,p), \label{eq:mobility-dominated flux}\\
 \max_{1\leq i\leq n,\,\rho\in[0,1]^n}|g_i(\rho)|&\leq C_1. \label{as:g mu}
\end{align}
The uniqueness half of Theorem~\ref{thm:main-mfg} additionally requires the Lasry--Lions monotonicity conditions: for all $\mu_1,\mu_2\in\cP_0(\G)$ and $p_1,p_2\in\bS(n)$ with $(\mu_1,p_1)\neq(\mu_2,p_2)$,
\begin{align}
& (\mu_1-\mu_2,H(\mu_1,p_1)-H(\mu_2,p_2))< (p_1-p_2,B(\mu_1,p_1)-B(\mu_2,p_2)),\label{as:uniqueness 1}\\
& (g(\mu_1)-g(\mu_2),\mu_1-\mu_2)\geq 0.\label{as:uniqueness 2}
\end{align}
Well-posedness for extended MFG on a continuous state space under~\eqref{as:uniqueness 1} has been studied, for instance, in \cite{LiSo20,Mun23}. When treating the master equation, we will use the following slightly stronger (differential) version of~\eqref{as:uniqueness 1}: for all $(\rho,p)\in(0,1)^n\times\bS(n)$ and $(\eta,q)\in(\bR^n_0\times\bS(n))\setminus\{(0,0)\}$,
\begin{equation}\label{as:uniqueness 3}
(D_\mu H(\rho,p)[\eta] + D_p H(\rho,p)[q], \eta) < (D_\mu B(\rho,p)[\eta] + D_p B(\rho,p)[q], q).
\end{equation}

This section proves Theorem~\ref{thm:main-mfg} in two steps. First, a priori bounds for a homotopic family of MFG systems, combined with Theorem~\ref{thm:main-interiority}, yield existence and uniqueness of classical solutions to~\eqref{eq:mfg} via a Schaefer fixed-point argument. Second, continuous differentiability of the solution map in the initial data allows us to define the value function and verify that it is the unique classical solution of the master equation~\eqref{eq:master}.

\subsection{Existence and uniqueness of classical solutions to the MFG system}
We begin by obtaining a priori estimates, uniform in $\lambda\in[0,1]$ and $(t,\mu)\in[0,T)\times\cP_0(\G)$, for classical solutions $(\phi,\rho):[t,T]\to\R^n\times(0,1)^n$ of the homotopic family
\begin{equation}  \label{eq:mfg lambda}
    \begin{cases} \dot \phi(s)= H(\rho(s),\lambda\nabla_{\G}\phi(s))-\Delta_{\G}\phi(s)& s\in (t,T)\\
    \dot \rho(s)=\nabla_{\G}\cdot (B(\rho(s),\lambda\nabla_{\G}\phi(s)))+\Delta_{\G}\rho(s)& s\in (t,T)\\
    \phi(T)= g(\rho(T)), \quad \rho(t)=\mu.
    \end{cases}
\end{equation}
\begin{lemma}[Lasry--Lions estimate] \label{lem:lasrylions} Let $(\phi,\rho)$ be a classical solution to \eqref{eq:mfg lambda}. Then
\begin{equation}
\lambda ( \phi(T),\rho(T))\leq \lambda ( \phi(t),\mu)+2C_1(T-t).
\end{equation}
\end{lemma}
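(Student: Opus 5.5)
Since $(\phi,\rho)$ is classical on $[t,T]$, I plan to differentiate $s\mapsto(\phi(s),\rho(s))$, insert both equations of \eqref{eq:mfg lambda}, and integrate from $t$ to $T$. The Laplacian terms will cancel and the Hamiltonian and flux terms will pair up. The result is then exactly what \eqref{as:B coercivity} controls.

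First, the product rule gives
\[
\frac{d}{ds}(\phi,\rho)=(\dot\phi,\rho)+(\phi,\dot\rho)
=(H(\rho,\lambda\nabla_\G\phi),\rho)-(\Delta_\G\phi,\rho)+(\phi,\nabla_\G\cdot B(\rho,\lambda\nabla_\G\phi))+(\phi,\Delta_\G\rho).
\]
Writing $\Delta_\G=\nabla_\G\cdot\nabla_\G$ and applying the integration by parts formula \eqref{eq:integration by parts formula} twice, $(\Delta_\G\phi,\rho)=-(\nabla_\G\phi,\nabla_\G\rho)=(\phi,\Delta_\G\rho)$, so the two Laplacian terms cancel. This is the only place the precise normalization of $\Delta_\G$ matters: one has to check that the $\Delta_\G$ appearing in the equations is the symmetric operator $\nabla_\G\cdot\nabla_\G$, which is consistent with the formula $\sum_j\omega_{ij}(\rho_j-\rho_i)$ used in Section~\ref{sec:interiority}. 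The flux term becomes $-(\nabla_\G\phi,B(\rho,\lambda\nabla_\G\phi))$.

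Second, I will multiply by $\lambda\ge 0$ and set $p=\lambda\nabla_\G\phi(s)$. This gives
\[
\lambda\frac{d}{ds}(\phi,\rho)=\lambda(H(\rho,p),\rho)-(p,B(\rho,p)).
\]
By \eqref{as:B coercivity}, $(B(\rho,p),p)\ge(H(\rho,p),\rho)-C_1$. Hence the right-hand side is at most $(\lambda-1)(H(\rho,p),\rho)+C_1$. Since $\rho(s)\in(0,1)^n$ is a probability vector by mass conservation, \eqref{as:H lower bound} gives $(H(\rho,p),\rho)\ge -C_1\sum_i\rho_i=-C_1$. Because $\lambda-1\le 0$, this yields $(\lambda-1)(H,\rho)\le(1-\lambda)C_1\le C_1$. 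Therefore $\lambda\frac{d}{ds}(\phi,\rho)\le 2C_1$.

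Finally, integrating over $[t,T]$ and using $\rho(t)=\mu$ gives the claimed inequality. The only subtle point is the sign bookkeeping with $\lambda\in[0,1]$: the coercivity is applied at $p=\lambda\nabla_\G\phi$ rather than at $\nabla_\G\phi$, which is why one multiplies by $\lambda$ first. The lower bound on $H$, weighted by the probability vector $\rho$, then absorbs the leftover $(1-\lambda)$ term. I do not expect a genuine obstacle beyond this.
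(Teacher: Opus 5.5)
Your proposal is correct and follows the paper's proof essentially line for line. You differentiate $(\lambda\phi,\rho)$ and cancel the two symmetric Laplacian terms. You then apply \eqref{as:B coercivity} at $p=\lambda\nabla_\G\phi$, and absorb the leftover $-(1-\lambda)(H,\rho)$ term using \eqref{as:H lower bound} together with $\sum_i\rho_i=1$, which gives the bound $2C_1$ before integrating over $[t,T]$.
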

\begin{proof}
By \eqref{as:B coercivity}, \eqref{as:H lower bound}, and the duality of \eqref{eq:mfg lambda}, we have
\begin{align*}
    \frac{d}{ds}(\lambda\phi(s),\rho(s))=&- (B(\rho(s),\lambda \nabla_{\G}\phi(s)), \lambda\nabla_{\G}\phi(s)) + \lambda (H(\rho(s),\lambda \nabla_{\G}\phi(s)) ,\rho)  \\
 \leq & C_1- (1-\lambda) (H(\rho,\lambda \nabla_{\G}\phi), \rho)\leq 2 C_1.
\end{align*}

We conclude the proof by integrating from $t$ to $T$.
\end{proof}
\begin{lemma}[Upper bound on $\phi$] \label{lem:upper phi} If $(\phi,\rho)$ is a classical solution to \eqref{eq:mfg lambda}, then
    \begin{equation}
        \max_{1\leq i \leq n} \phi_i(s) \leq  C_1(T-s)+\max_{1\leq i\leq n}  \phi_i(T), \quad s\in [t, T].
    \end{equation}
\end{lemma}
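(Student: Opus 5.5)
The plan is a maximum-principle argument applied to the backward equation for $\phi$, using only the lower bound \eqref{as:H lower bound} on $H$. The first step is to compute the sign of the graph Laplacian at a maximizing vertex. With the conventions of Section~\ref{sec:setup}, $(\Delta_\G\phi)^i=(\nabla_\G\cdot\nabla_\G\phi)^i=\sum_j\sqrt{\omega_{ij}}\sqrt{\omega_{ji}}(\phi^j-\phi^i)=\sum_j\omega_{ij}(\phi^j-\phi^i)$. This agrees with the formula $\sum_j(\rho_j-\rho_i)\omega_{ij}$ used in the proof of Lemma~\ref{lem:interiority 1}. Consequently, at any index $i$ with $\phi_i(s)=\max_k\phi_k(s)$ we have $(\Delta_\G\phi(s))^i\le 0$.

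Next, set $M(s):=\max_i\phi_i(s)$. It is a maximum of finitely many $C^1$ functions, so it is Lipschitz on $[t,T]$. I would rather avoid differentiating $M$ directly, and instead compare with the barrier
\[
\psi(s):=C_1(T-s)+M(T)+\eta(T-s+1),\qquad \eta>0 .
\]
The claim is that $\phi_i(s)<\psi(s)$ for all $i$ and all $s\in[t,T]$. This holds at $s=T$, since there $\phi_i(T)\le M(T)<M(T)+\eta$. Suppose the inequality fails somewhere. Then there is a largest time $s_0<T$ and an index $i$ with $\phi_i(s_0)=\psi(s_0)$, while $\phi_k<\psi$ on $(s_0,T]$ for every $k$. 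From this, $\dot\phi_i(s_0)\le\dot\psi(s_0)=-C_1-\eta$. Moreover $\phi_i(s_0)=\max_k\phi_k(s_0)$, because every $\phi_k(s_0)\le\psi(s_0)$ by continuity. The equation in \eqref{eq:mfg lambda}, together with \eqref{as:H lower bound} and the Laplacian sign from the first step, then gives
\[
\dot\phi_i(s_0)=H_i(\rho(s_0),\lambda\nabla_\G\phi(s_0))-(\Delta_\G\phi(s_0))^i\ge -C_1,
\]
which contradicts $\dot\phi_i(s_0)\le -C_1-\eta$. Letting $\eta\to0$ yields $\max_i\phi_i(s)\le C_1(T-s)+\max_i\phi_i(T)$.

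No step here is really an obstacle. The only points needing care are the sign convention for $\Delta_\G$ and the use of a one-sided derivative at the last contact time, and the $\eta$-perturbation handles the latter cleanly. Note also that \eqref{as:H lower bound} holds for every argument $p$, so it applies to $p=\lambda\nabla_\G\phi$ uniformly in $\lambda\in[0,1]$, and the bound is therefore uniform over the homotopic family.
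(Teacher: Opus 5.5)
Your proof is correct and rests on the same key observation as the paper's proof: at a maximizing vertex $(\Delta_\G\phi)^i\le 0$, so $\dot\phi_i\ge H_i\ge -C_1$ there. The only difference is in how this is used. The paper differentiates the Lipschitz function $M(s)=\max_i\phi_i(s)$ almost everywhere, with $M'(s)=\dot\phi_{i_s}(s)$ for an active index $i_s$, and integrates over $[s,T]$; you avoid differentiating $M$ by using the $\eta$-perturbed barrier and a last-contact-time argument.
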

\begin{proof}
For $s\in [t,T]$, let $M(s)=\max_{i}\phi_i(s)$. Since $\phi$ is a classical solution, we see that $M$ is Lipschitz continuous and, if $i_s\in \operatorname{argmax}_{1\leq j\leq n}(\phi_j(s))$, then $M'(s)=\dot \phi_{i_s}(s)$ for almost every $s$. Moreover,  $\Delta_{\G}\phi_{i_s}(s)\leq0$, and thus from \eqref{as:H lower bound},
\begin{equation}
    M'(s)=\dot \phi_{i_s}(s)\geq H_{i_s}\geq -C_1.
\end{equation}
The claim follows by integrating over $[s,T]$.
\end{proof}
\begin{proposition}[Uniform bounds on $\lambda \phi$]\label{prop:phi bd} Let $\mu \in \cP_\epsilon(\G)$ and $t\in[0,T)$. For any classical solution $(\phi,\rho)$ to \eqref{eq:mfg lambda},
\begin{equation}
\epsilon \big\|\max_{1\leq i\leq n}| \lambda\phi_i|\big\|_{L^{\infty}(t,T)}\leq (4(T-t)+3)C_1.
\end{equation}
\end{proposition}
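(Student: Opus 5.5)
We prove the upper and lower bounds on $\lambda\phi_i$ separately. The upper bound will follow from Lemma~\ref{lem:upper phi} and the terminal bound \eqref{as:g mu}. The lower bound will come from the Lasry--Lions estimate of Lemma~\ref{lem:lasrylions}, used together with the positivity of $\mu$ and an interior-time version of that estimate. We may assume $\lambda>0$, since the claim is trivial for $\lambda=0$.

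\emph{Upper bound.} Lemma~\ref{lem:upper phi} and $\phi(T)=g(\rho(T))$ with $|g|\le C_1$ give
\[
\lambda\phi_i(s)\le \lambda C_1(T-s)+\lambda C_1\le C_1(T-t+1)\qquad\text{for all } i \text{ and } s\in[t,T].
\]
Since $\epsilon<1/n\le 1$, we get $\epsilon\,\lambda\phi_i(s)\le C_1(T-t+1)$, which is within the claimed bound.

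\emph{Lower bound.} Fix $s\in[t,T]$. The computation in the proof of Lemma~\ref{lem:lasrylions} shows $\frac{d}{d\tau}(\lambda\phi,\rho)\le 2C_1$, and this holds on any subinterval. Integrating over $[s,T]$ gives
\[
\lambda(\phi(T),\rho(T))\le \lambda(\phi(s),\rho(s))+2C_1(T-s).
\]
Since $\rho(T)$ is a probability vector and $|g|\le C_1$, the left side is at least $-\lambda C_1$. Hence
\[
\lambda(\phi(s),\rho(s))\ge -C_1-2C_1(T-s).
\]
Now write
\[
(\phi(s),\rho(s))=\sum_k\phi_k(s)\rho_k(s)\le \phi_i(s)\rho_i(s)+\max_k\phi_k(s)\,\bigl(1-\rho_i(s)\bigr),
\]
and bound $\lambda\max_k\phi_k(s)$ above by $C_1(T-s+1)$ as in the first step. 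This yields
\[
\lambda\phi_i(s)\rho_i(s)\ge -(3(T-s)+2)C_1 .
\]
To conclude we need $\rho_i(s)\gtrsim\epsilon$. This is the delicate point: Theorem~\ref{thm:main-interiority} only gives $\rho_i(s)\ge c\epsilon e^{-rs}$, with constants that do not match the clean bound $(4(T-t)+3)C_1$.

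\emph{Avoiding the interior time.} So we should instead work at $s=t$, where $\rho(t)=\mu\in\cP_\epsilon(\G)$. There the identity above gives $\epsilon\,\lambda\phi_i(t)\ge -(3(T-t)+2)C_1$. For general $s$ we use a comparison (minimum principle) argument on the $\phi$-equation. Let $m(\tau)=\min_i\phi_i(\tau)$. At a minimizing index $\Delta_\G\phi\ge0$, so $m'\le H$. That is not useful without an upper bound on $H$, so this route seems blocked.

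The alternative is to regard $(\phi,\rho)|_{[s,T]}$ as a solution of \eqref{eq:mfg lambda} started at $(s,\rho(s))$ and use $\rho(s)\in\cP_{\epsilon'}$. This again requires interiority, and the main obstacle is obtaining a lower bound on $\rho(s)$ comparable to $\epsilon$. I would handle it by combining the estimate at time $t$ with a Lipschitz-in-time bound for $\lambda\phi$ obtained from the ODE. Failing that, I would accept interiority constants (Theorem~\ref{thm:main-interiority} with $h=a(\cdot,\lambda\nabla_\G\phi)$) and check whether the intended reading of the proposition is the bound at the initial time $t$ propagated by the above argument.

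Once the lower bound on $\lambda\phi_i(s)$ is in hand, the two inequalities combine to give $\epsilon\,|\lambda\phi_i(s)|\le(4(T-t)+3)C_1$.
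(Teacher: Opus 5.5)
Your proposal has a genuine gap. Nothing in it bounds $\lambda\phi_i(s)$ from below for $s\in(t,T]$, and none of the fallbacks you list closes it.

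\begin{itemize}
\item Reading the statement only at $s=t$ is not admissible. The norm is $L^\infty(t,T)$, and Lemma~\ref{lem:rho lower bound mfg lambda} needs the bound on all of $[t,T]$ to control $\lambda\nabla_\G\phi$.
\item Appealing to Theorem~\ref{thm:main-interiority} is circular. Its constants depend on $h$, which for \eqref{eq:mfg lambda} must be $\sup_{|p|\le M}a(\cdot,p)$ with $M$ a bound on $\lambda\nabla_\G\phi$. That bound is exactly what this proposition is supposed to supply. A solution-dependent $h$ would not give the bound uniform in $\lambda$ that Schaefer's theorem requires.
\item A Lipschitz-in-time bound on $\phi$ is unavailable. $H$ is only bounded below, and $\Delta_\G\phi$ involves the unknown size of $\phi$.
\end{itemize}

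The missing idea is to sum the $\phi$-equation over vertices instead of tracking its minimum. Since $(\Delta_\G\phi,\mathbf 1)=0$, the diffusion term that defeated your minimum principle drops out. Then \eqref{as:H lower bound} gives $\dot\Phi\ge -nC_1$ for $\Phi:=\sum_i\phi_i$, so lower bounds propagate forward from time $t$: $\lambda\Phi(s)\ge\lambda\Phi(t)-\lambda nC_1(s-t)$.

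At time $t$ your own computation applies, because $\rho(t)=\mu\in\cP_\epsilon(\G)$. Indeed $\lambda\Phi(t)\ge-\frac1\epsilon(\lambda\phi(t)^-,\mu)$, and $(\lambda\phi(t)^-,\mu)=(\lambda\phi(t)^+,\mu)-(\lambda\phi(t),\mu)\le C_1(3(T-t)+2)$ by Lemmas~\ref{lem:upper phi} and~\ref{lem:lasrylions}.

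You then recover each component from the sum and the upper bound:
\[
\lambda\phi_i(s)=\lambda\Phi(s)-\sum_{k\neq i}\lambda\phi_k(s)\ge\lambda\Phi(t)-\lambda nC_1(s-t)-(n-1)\lambda C_1(T-s+1)\ge\lambda\Phi(t)-nC_1(T-t+1).
\]
Using $n\le 1/\epsilon$, this gives $\epsilon\,\lambda\phi_i(s)\ge-(4(T-t)+3)C_1$.

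This is the paper's argument. Its write-up estimates the two error terms separately and ends with $(5(T-t)+4)C_1/\epsilon$, which is the constant quoted in Lemma~\ref{lem:rho lower bound mfg lambda}. Combining the terms as above yields the stated constant exactly.
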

\begin{proof} By Lemma \ref{lem:upper phi}, we have
\begin{equation} \label{eq:lphi upper pf}
\max_{1\leq i\leq n} \lambda \phi_i(s) \leq \lambda (T-t+1)C_1, \quad s\in [t,T],
\end{equation}
so to prove the result it suffices to bound from below the function $s \mapsto \lambda\Phi(s)=(\lambda \phi(s),\textbf{1})=\sum_{i=1}^{n}\lambda\phi_i(s).$
Testing the first equation of \eqref{eq:mfg lambda} against \textbf{1}, using \eqref{as:H lower bound} and the fact that $(\Delta_{\G}\phi,\textbf{1})=0$, we have
$\dot \Phi(s)\geq\sum_{i}H_i \geq -C_1n.$
Therefore, noting that $1=\sum_{i}\mu^i\geq n\epsilon$, we obtain that
\begin{equation} \label{eq:dec16.2025.1}
\lambda \Phi(s)\geq \lambda \Phi(t)-\lambda C_1n(s-t)\geq \lambda \Phi(t)-\frac{C_1 }{\epsilon}(s-t),\quad s\in [t,T],
\end{equation}
so it suffices to bound $\lambda \Phi(t)$ from below. By Lemma \ref{lem:lasrylions}, we have
\begin{equation}
    -C_1 \leq (\lambda \phi(t),\mu)+2C_1(T-t) = (\lambda \phi(t)^+,\mu)+2C_1(T-t) -(\lambda\phi(t)^-,\mu).
\end{equation}
Moreover, by Lemma \ref{lem:upper phi} and \eqref{as:g mu},
\[
(\lambda \phi(t)^+,\mu)\le \max_{i}\lambda \phi_i(t)\le \max_i \phi_i(t)\le C_1(T-t)+\max_i\phi_i(T)\le C_1(T-t+1),
\]
and therefore
\begin{equation}
    -C_1 \leq (\lambda \phi(t),\mu)+2C_1(T-t) \leq C_1(2(T-t)+1) -(\lambda \phi(t)^-,\mu).
\end{equation}
Thus,
\begin{equation}\label{eq:dec16.2025.2}
    \lambda \Phi(t)^-\leq (\lambda \phi^-(t),\textbf{1})\leq \frac{1}{\epsilon}(\lambda \phi^-(t),\mu)\leq\frac{3C_1(T-t+1)}{\epsilon}.
\end{equation}
Recalling \eqref{eq:lphi upper pf}, we conclude that
\[
\lambda \phi_i(s)=  \lambda\Phi(s)-\sum_{k \not =i } \lambda \phi_k(s)
\geq \lambda\Phi(s) - (n-1)\max_k\lambda\phi_k(s),
\]
for all $i$. We use \eqref{eq:dec16.2025.1} and \eqref{eq:dec16.2025.2} to get that
\[
\lambda\Phi(s)\ge -\frac{3C_1(T-t+1)}{\epsilon}-\frac{C_1}{\epsilon}(s-t)\ge -\frac{C_1(4(T-t)+3)}{\epsilon}, \quad s\in[t,T],
\]
and hence, using again that $n\epsilon\le 1$,
\[
\lambda \phi_i(s)\ge -\frac{C_1(4(T-t)+3)}{\epsilon}-(n-1)C_1(T-t+1)\ge -\frac{C_1(5(T-t)+4)}{\epsilon},
\]
for all $i$ and $s \in [t,T].$ Together with \eqref{eq:lphi upper pf}, this proves the claim.
\end{proof}


We may now combine the estimates on $\phi$ with the interiority estimates of the previous section to obtain a lower bound on $\rho_i$ that does not deteriorate for arbitrarily small time horizons.
\begin{lemma}[A priori lower bound on $\rho$]\label{lem:rho lower bound mfg lambda}
Let $\mu \in \cP_\epsilon(\G)$, $\lambda\in[0,1]$, $t\in[0,T)$, and let $(\phi,\rho)$ be a classical solution to \eqref{eq:mfg lambda}. There exists a constant $c>0$, depending only on $n$, $\omega_{\min}$, $\omega_{\max}$, $a$, $C_1$, $\epsilon$, and monotonically on $T$, such that
\begin{equation}\label{eq:rho lower bd mfg lambda}
    \min_{1\leq i\leq n} \rho_i(s)\geq c\epsilon, \quad s\in[t,T].
\end{equation}
\end{lemma}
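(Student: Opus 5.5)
We plan to apply Theorem~\ref{thm:main-interiority} to the forward equation of \eqref{eq:mfg lambda}, with the time-dependent flux
\[
A(s,\nu):=B\big(\nu,\lambda\nabla_\G\phi(s)\big),\qquad s\in[t,T],\ \nu\in(0,1)^n,
\]
after shifting time so that the initial time is $0$ and the horizon is $T-t\le T$. Once $\phi$ is fixed, $\rho$ solves \eqref{eq:conti general} with this $A$. The flux $A$ is skew-symmetric and $C^1$ in $\nu$. In $s$ it is only continuous, but the proof of Theorem~\ref{thm:main-interiority}, via Lemmas~\ref{lem:interiority 1} and~\ref{lem:interiority 2}, uses only the bound \eqref{as:A conti} along the given classical solution $\rho$ together with boundedness of $h$, so this suffices; alternatively, we could apply those two lemmas directly. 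The work is to produce a single function $h$, depending only on the listed quantities, that dominates $A$ in the sense of \eqref{as:A conti}.

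The first step is to bound $\lambda\nabla_\G\phi$ uniformly. Proposition~\ref{prop:phi bd} gives $\|\lambda\phi\|_{L^\infty(t,T)}\le (4T+3)C_1/\epsilon$. Hence
\[
|(\lambda\nabla_\G\phi(s))^{ij}|\le 2\sqrt{\omega_{\max}}\,(4T+3)C_1/\epsilon=:R,
\]
where $R$ depends only on $\omega_{\max}$, $C_1$, $\epsilon$, and increases with $T$. Thus $p(s):=\lambda\nabla_\G\phi(s)$ stays in the compact ball $\mathcal K_R:=\{p\in\bS(n):\max|p^{ij}|\le R\}$.

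Next, set $h(u):=\sup_{p\in\mathcal K_R}a(u,p)$. By \eqref{eq:mobility-dominated flux}, $|A_{ij}(s,\nu)|\le(\nu^i+\nu^j)h(\nu^j/\nu^i)$. The locally uniform limits of $a$ give $h(u)\to0$ as $u\to0^+$ and as $u\to\infty$. I expect the main obstacle to be checking that $h$ is bounded and satisfies the structural hypotheses of Theorem~\ref{thm:main-interiority}. Boundedness on compact subsets of $(0,\infty)$ follows from local boundedness of $a$ on $[u_0,u_1]\times\mathcal K_R$, and combined with the limits this makes $h$ bounded. If $h$ takes the value $0$ or fails to be strictly positive, we replace it by $h+\min(u,1/u)$, which keeps the limits $0$. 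Theorem~\ref{thm:main-interiority} then gives
\[
\min_i\rho_i(s)\ge c'\epsilon\, e^{-r(s-t)}\ge c'e^{-rT}\epsilon ,
\]
with $c',r$ depending on $n,\omega_{\min},\omega_{\max},h$, and hence on $a,C_1,\epsilon,T$. We set $c:=c'e^{-rT}$.

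For the monotone dependence on $T$, note that $R$ increases with $T$, so $h$ increases pointwise. We will check that the constants in Lemmas~\ref{lem:interiority 1} and~\ref{lem:interiority 2} are monotone in $h$: $C_0$ is chosen from the decay of $h$, and the bound on $|\dot\rho|$ uses $\|h\|_\infty$. It follows that $c'$ and $r$, and therefore $c$, can be taken nonincreasing in $T$. Uniformity in $\lambda\in[0,1]$ and in $t$ is automatic, because $R$ does not depend on either.
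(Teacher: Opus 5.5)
Your proposal is correct and follows essentially the same route as the paper. Like the paper, you bound $\lambda\nabla_\G\phi$ via Proposition~\ref{prop:phi bd}, set $h(u)=\sup_{|p|\le R}a(u,p)$, apply Theorem~\ref{thm:main-interiority} to the time-shifted forward equation, and take $c=c'e^{-rT}$. Your extra care (making $h$ strictly positive, tracking how the constants depend monotonically on $h$) fills details the paper glosses over, and your worry about time regularity is unnecessary: $\phi\in C^1$ already makes $A$ jointly $C^1$.
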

\begin{proof}
Let $L:=\left\|\max_{1\leq i\leq n}|\lambda\phi_i|\right\|_{L^{\infty}(t,T)}$. By Proposition \ref{prop:phi bd}, $L\leq \frac{(5(T-t)+4)C_1}{\epsilon}\leq \frac{(5T+4)C_1}{\epsilon}$. Moreover, for $(i,j)\in \bE$,
\[
|(\lambda\nabla_{\G}\phi)_{ij}|=\sqrt{\omega_{ij}}\,|\lambda\phi_j-\lambda\phi_i|
\leq 2\sqrt{\omega_{\max}}\,L=:M.
\]
The constant $M = M(T)$ depends only on $n$, $\omega_{\max}$, $C_1$, $\epsilon$, and monotonically on $T$, and bounds
\[
\|\lambda\nabla_{\G}\phi\|_{L^{\infty}(t,T)}\leq M.
\]
Define
\[
h(u):=\sup_{|p|\leq M}a(u,p), \quad u>0.
\]
Then $h$ is bounded and satisfies \eqref{eq:h properties}. In addition, by \eqref{eq:mobility-dominated flux},
\[
|B_{ij}(\rho(s),\lambda\nabla_{\G}\phi(s))|
\leq (\rho_i(s)+\rho_j(s))\,h(\rho_j(s)/\rho_i(s)), \quad (i,j)\in \bE,
\]
so the second equation in \eqref{eq:mfg lambda} is of the form \eqref{eq:conti general}. Applying Theorem~\ref{thm:main-interiority} to the time-translated equation $\tilde\rho(\sigma):=\rho(\sigma+t)$ on $[0,T-t]$ yields $\min_{i}\rho_i(s)\geq c'\epsilon e^{-r'(s-t)}\geq c'\epsilon e^{-r'T}$ for $s\in[t,T]$, with $c',r'>0$ depending only on the above parameters; setting $c:=c'e^{-r'T}$ gives \eqref{eq:rho lower bd mfg lambda}. Finally, since the above choice of $M$ is monotone in $T$, the constant $c$ may be chosen monotonically in $T$ as well (in particular, it does not deteriorate as $T\downarrow 0$).
\end{proof}
%

With these estimates in hand, we may now establish the main result of this subsection.
\begin{proposition}[Existence and uniqueness for the extended MFG system]\label{prop:mfg-classical}
Under the hypotheses of Theorem~\ref{thm:main-mfg}, for every $(t,\mu)\in[0,T)\times\cP_0(\G)$, the system~\eqref{eq:mfg} admits a classical solution $(\phi,\rho)\in C^2([t,T];\R^n\times(0,1)^n)$, which is unique if additionally~\eqref{as:uniqueness 1}--\eqref{as:uniqueness 2} hold.
\end{proposition}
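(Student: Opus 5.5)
The plan is a Schaefer fixed-point argument along the homotopy \eqref{eq:mfg lambda}, followed by a Lasry--Lions duality argument for uniqueness. Fix $(t,\mu)\in[0,T)\times\cP_\epsilon(\G)$ for some $\epsilon>0$. The map $\Phi$ is set up on $X:=C([t,T];\R^n)$, viewed as a candidate for $\phi$. Given $\psi\in X$, first solve the forward equation
\[
\dot\rho=\nabla_\G\cdot B(\rho,\nabla_\G\psi)+\Delta_\G\rho,\qquad \rho(t)=\mu .
\]
Since $\psi$ is continuous, $\|\nabla_\G\psi\|_\infty\le M$ for some $M$, and $h(u):=\sup_{|p|\le M}a(u,p)$ satisfies \eqref{eq:h properties}. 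Theorem~\ref{thm:main-interiority} then yields a unique solution $\rho=\rho[\psi]\in C^1([t,T];(0,1)^n)$ bounded below by $c\epsilon e^{-rT}$. Strictly, the theorem asks for $A\in C^1$ in $s$, but only continuity in $s$ is used there, so one applies it with $A(s,\rho)=B(\rho,\nabla_\G\psi(s))$; alternatively, approximate $\psi$ by smooth functions. Next solve the backward linear-in-$\phi$ ODE
\[
\dot\phi=H(\rho,\nabla_\G\psi)-\Delta_\G\phi,\qquad \phi(T)=g(\rho(T)),
\]
and set $\Phi(\psi):=\phi$.

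Because $\Phi$ is nonlinear, I would apply Schaefer's theorem in the form ``a compact continuous $\Phi$ with $\{\psi=\lambda\Phi(\psi),\ \lambda\in[0,1]\}$ bounded has a fixed point''. The fixed points of $\lambda\Phi$ are not literally \eqref{eq:mfg lambda}, so I would instead define $\Phi_\lambda(\psi)$ using $\lambda\nabla_\G\psi$ inside $H$ and $B$. Then $\psi=\Phi_\lambda(\psi)$ is exactly \eqref{eq:mfg lambda}, and I would use the Leray--Schauder homotopy version of the theorem, under which $\Phi_0$ must have a fixed point. This holds: with $\lambda=0$ the forward equation decouples and $\Phi_0$ is constant. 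Continuity of $\Phi_\lambda$ in $(\lambda,\psi)$ comes from continuous dependence of ODE solutions, using that $B$ and $H$ are $C^1$ on the interior together with the uniform interior bound, which keeps $\rho$ in a compact subset of $(0,1)^n$ where $B$ and $H$ are locally Lipschitz. Compactness follows from Arzel\`a--Ascoli: for $\psi$ in a bounded set, $\dot\phi$ is bounded.

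The a priori bound on fixed points needs care, because Proposition~\ref{prop:phi bd} only controls $\lambda\phi$. On $\lambda\nabla_\G\phi$ it gives $|\lambda\nabla_\G\phi|\le M$, and Lemma~\ref{lem:rho lower bound mfg lambda} gives $\rho\ge c\epsilon$. Hence $H(\rho,\lambda\nabla_\G\phi)$ is bounded by a constant $C$ independent of $\lambda$, because $H$ is continuous on the compact set $\{\rho\in\cP(\G):\rho\ge c\epsilon\}\times\{|p|\le M\}$. The backward equation $\dot\phi=H-\Delta_\G\phi$ with $|\phi(T)|\le C_1$ then gives $\|\phi\|_\infty\le C_1+C(T-t)$ by a max/min argument as in Lemma~\ref{lem:upper phi}. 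This is the main obstacle I anticipate: making the homotopy formulation consistent with the available estimates, which are only for $\lambda\phi$. The route above, bounding the gradient entering $H$ and $B$ and then integrating the linear backward equation, resolves it. Regularity $C^2$ then follows by bootstrapping: $\phi,\rho\in C^1$ makes the right-hand sides $C^1$. Finally, $\mu\in\cP_0(\G)$ lies in $\cP_\epsilon(\G)$ for some $\epsilon>0$.

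For uniqueness, take two solutions $(\phi_k,\rho_k)$ with the same data and set $p_k:=\nabla_\G\phi_k$. Compute $\frac{d}{ds}(\phi_1-\phi_2,\rho_1-\rho_2)$. The Laplacian terms cancel by the symmetry of $\Delta_\G$, and integration by parts \eqref{eq:integration by parts formula} gives
\[
\frac{d}{ds}(\phi_1-\phi_2,\rho_1-\rho_2)=(H(\rho_1,p_1)-H(\rho_2,p_2),\rho_1-\rho_2)-(p_1-p_2,B(\rho_1,p_1)-B(\rho_2,p_2)).
\]
By \eqref{as:uniqueness 1} this is $\le0$, with strict inequality wherever $(\rho_1,p_1)\ne(\rho_2,p_2)$. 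Integrating from $t$ to $T$, the left side equals $(g(\rho_1(T))-g(\rho_2(T)),\rho_1(T)-\rho_2(T))-0\ge0$ by \eqref{as:uniqueness 2}. Hence $(\rho_1,p_1)=(\rho_2,p_2)$ for all $s$. Then $\rho_1=\rho_2$, so $\phi_1$ and $\phi_2$ solve the same linear backward ODE with the same terminal value, and therefore $\phi_1=\phi_2$.
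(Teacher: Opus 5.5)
Your proof is correct, but the reason you give for leaving the plain Schaefer formulation rests on a misreading. The uniqueness part is the paper's Lasry--Lions argument; your final ``same linear backward ODE'' step is equivalent to the paper's observation that $\phi^1-\phi^2$ is a spatial constant $\kappa(s)$ forced to vanish. The existence part differs from the paper only in how the homotopy is set up.

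The paper applies Schaefer's theorem to the map $\mathcal T$ with no $\lambda$ in it (your $\Phi$, built from $\nabla_\G\varphi$). If $\varphi=\lambda\mathcal T(\varphi)$ with $\lambda\in(0,1]$ and $\phi:=\mathcal T(\varphi)$, then $\varphi=\lambda\phi$, so $\nabla_\G\varphi=\lambda\nabla_\G\phi$, and $(\phi,\rho)$ solves \eqref{eq:mfg lambda} exactly. That is why \eqref{eq:mfg lambda} carries $\lambda$ inside the gradient and why Proposition~\ref{prop:phi bd} is stated for $\lambda\phi$. The Schaefer set consists of the functions $\lambda\phi$, and that proposition bounds them directly, so the obstacle you anticipate does not arise.

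Your Leray--Schauder variant with $\Phi_\lambda$ is still valid. Since its fixed points are $\phi$ rather than $\lambda\phi$, it needs the extra step you supply: a two-sided bound on $H(\rho,\lambda\nabla_\G\phi)$, then the max/min principle for $\dot\phi=H-\Delta_\G\phi$. The bound on $H$ comes from compactness of $\{\rho\in\cP(\G):\rho\ge c\epsilon\}\times\{|p|\le M\}$ inside $(0,1)^n\times\bS(n)$. You also need joint continuity in $(\lambda,\psi)$ and the fact that $\Phi_0$ is constant, both of which you check.

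The paper's route is therefore shorter and needs no upper bound on $H$. Yours gives a bound on $\phi$ itself, which is not needed here.

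Your remark that Theorem~\ref{thm:main-interiority} is applied with $A$ only continuous in time is a fair point that the paper passes over. As you note, its proof uses only continuity in $s$ and local Lipschitz dependence on the state.
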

\begin{proof}
The system~\eqref{eq:mfg} corresponds to the case $\lambda=1$ of the homotopic family~\eqref{eq:mfg lambda}.
Fix $(t,\mu)\in[0,T)\times\cP_0(\G)$. Let $\mathbb X=C^0([t, T] ; \R^n)$ be equipped with the uniform norm. Noting that $\mathbb X$ is a Banach space, we define an operator $\mathcal T : \mathbb X \to \mathbb X$ as follows. Let $\varphi \in \mathbb X$. By Theorem~\ref{thm:main-interiority}, applied to the time-translated equation $\tilde\rho(\sigma):=\rho(\sigma+t)$ on $[0,T-t]$ with $A_{ij}(\sigma,\mu):=B_{ij}(\mu,\nabla_{\G}\varphi(\sigma+t))$ and $h(u)=\sup_{|p|\leq \|\nabla_{\G}\varphi\|_{\infty}}a(u,p)$, there exists a unique classical solution $\rho \in C^1([t, T];(0,1)^n)$ to
\begin{equation}\label{eq:conti fix pf}
    \dot \rho(s)= \nabla_{\G}\cdot(B(\rho(s),\nabla_{\G}\varphi(s)))+\Delta_{\G}\rho(s), \quad s\in [t,T],\quad \rho(t)=\mu.
\end{equation}
Since $\rho_i>0$ for all $i$, the backward ODE
\begin{equation}
    \dot \phi(s)=H(\rho(s),\nabla_{\G}\varphi(s))-\Delta_{\G}\phi(s), \;\; s\in [t,T], \quad \phi(T)=g(\rho(T)),
\end{equation}
then has a unique classical solution $\phi$, and we may define $\mathcal T(\varphi):=\phi$. Thanks to the lower bound \eqref{eq:rho lower bd}, smooth dependence on parameters for ODE solutions, and the Arzel\`a-Ascoli theorem, the operator $\mathcal T$ is continuous and compact. More precisely, given a bounded family $\{\varphi_{\alpha}\}_{\alpha}\subset \mathbb X$ with $\sup_{\alpha}\|\nabla_{\G}\varphi_{\alpha}\|_{L^{\infty}(t,T)}\leq M$, we may apply Theorem~\ref{thm:main-interiority} with the fixed choice $h(u):=\sup_{\|p\|\leq M}a(u,p)$.   Moreover, by Proposition \ref{prop:phi bd}, the set
\begin{equation}
    \{\varphi\in \mathbb X: \varphi=\lambda \mathcal T(\varphi) \;\;\text{for some } \;0\leq\lambda\leq 1\}
\end{equation}
is bounded. Therefore, by Schaefer's fixed point theorem, $\mathcal T$ has a fixed point $\varphi$, so that $\phi:=\mathcal T(\varphi)=\varphi$. Defining $\rho$ by \eqref{eq:conti fix pf}, we see that $(\phi,\rho)$ is a classical solution to \eqref{eq:mfg}, and $C^{2}$ regularity follows from $H\in C^1$ and $B\in C^1$. Finally, uniqueness under \eqref{as:uniqueness 1} and \eqref{as:uniqueness 2} follows by adapting the classical Lasry--Lions argument \cite{LL2} to our discrete setting. If $(\rho^i, \phi^i)$, $i=1,2$, are two solutions to \eqref{eq:mfg}, differentiating $(\rho^1-\rho^2, \phi^1-\phi^2)$ and using \eqref{eq:mfg} gives
\begin{align*}
& \Big(\rho^1(T)-\rho^2(T), \phi^1(T)-\phi^2(T) \Big)- \Big(\rho^1(t)-\rho^2(t), \phi^1(t)-\phi^2(t) \Big)\\
=& \int_t^T \Big( \big(\rho^1-\rho^2, H(\rho^1, \nabla_{\G} \phi^1)-H(\rho^2, \nabla_{\G} \phi^2)\big)-
\big(B(\rho^1, \nabla_{\G} \phi^1)-B(\rho^2, \nabla_{\G} \phi^2),  \nabla_{\G} \phi^1- \nabla_{\G} \phi^2\big)
\Big)ds.
\end{align*}
Since $\rho^1(t)=\rho^2(t)=\mu$, \eqref{as:uniqueness 2} makes the left-hand side nonnegative, while if $(\rho^1,\nabla_\G\phi^1)\not=(\rho^2,\nabla_\G\phi^2)$ on a set of positive measure, \eqref{as:uniqueness 1} forces the right-hand side to be strictly negative. Hence $(\rho^1, \nabla_{\G} \phi^1)=(\rho^2, \nabla_{\G} \phi^2)$ a.e., and by continuity $\rho^1=\rho^2$ and $\nabla_{\G} \phi^1=\nabla_{\G} \phi^2$ on $[t,T]$. Since $\bG$ is connected, $\phi^1-\phi^2$ is spatially constant, i.e.\ $\phi^1_i-\phi^2_i=\kappa(s)$ for some $\kappa\in C^2([t,T])$; the first equation in \eqref{eq:mfg} forces $\kappa$ to be constant, and $\phi^1(T)=\phi^2(T)$ then yields $\kappa\equiv 0$.
\end{proof}

\subsection{Continuous differentiability with respect to the initial time and measure}
Having established existence and uniqueness of classical solutions $(\phi^{t,\mu},\rho^{t,\mu})$ to~\eqref{eq:mfg} for every $(t,\mu)\in[0,T)\times\cP_0(\G)$ in the preceding subsection, we now turn to the regularity of the solution map with respect to the initial data $(t,\mu)$.

\begin{proposition}[$C^1$-dependence on initial data]\label{prop:C1-dependence}
Assume \eqref{as:B coercivity}--\eqref{as:uniqueness 3}, and $g \in C^1((0,1)^n;\R^n)$. Let $(\phi^{t,\mu}, \rho^{t,\mu})$ denote the unique solution of \eqref{eq:mfg} on $[t,T]$ with initial measure $\mu \in \cP_0(\G)$. The rescaled solution map
\[
\cS : [0,T] \times \cP_0(\G) \to C^1([0,1]; \R^n)^2, \qquad \cS(t,\mu)(\theta) = \big(\phi^{t,\mu}(t + (T-t)\theta), \, \rho^{t,\mu}(t + (T-t)\theta)\big)
\]
(with $\cS(T,\mu)(\theta) := (g(\mu), \mu)$) is of class $C^1$ on $[0,T] \times \cP_0(\G)$.
For $t \in [0,T)$ and $\nu \in \R^n_0$, if we define
 \[
 (\psi, \eta)(s) := (\delta_\mu \cS(t,\mu)[\nu])\Big({s-t \over T-t}\Big), \qquad \forall s \in [t,T],
 \]
so that $(\psi, \eta) = \big(\delta_\mu\phi^{t,\mu}[\nu], \delta_\mu \rho^{t,\mu}[\nu] \big)$, then $(\psi, \eta)$ solves the linearized system:
\begin{equation} \label{eq:lin-mfg}
\begin{cases}
\dot \psi(s) = D_\mu H(\rho^{t,\mu}(s), \nabla_\G \phi^{t,\mu}(s))[\eta(s)] + D_p H(\rho^{t,\mu}(s), \nabla_\G \phi^{t,\mu}(s))[\nabla_\G \psi(s)] - \Delta_\G \psi(s), \\[1mm]
\dot \eta(s) = \nabla_\G \cdot \big( D_\mu B(\rho^{t,\mu}(s), \nabla_\G \phi^{t,\mu}(s))[\eta(s)] + D_p B(\rho^{t,\mu}(s), \nabla_\G \phi^{t,\mu}(s))[\nabla_\G \psi(s)] \big) + \Delta_\G \eta(s), \\[1mm]
\psi(T) = Dg(\rho^{t,\mu}(T))[\eta(T)], \qquad \eta(t) = \nu.
\end{cases}
\end{equation}
\end{proposition}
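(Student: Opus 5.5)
The approach is the implicit function theorem applied to a shooting formulation on the fixed interval $[0,1]$. After the change of variables $s=t+(T-t)\theta$, the system~\eqref{eq:mfg} becomes a boundary value problem on $[0,1]$ in which $\tau:=T-t$ enters as a parameter multiplying the vector field:
\[
\phi'=\tau\big(H(\rho,\nabla_\G\phi)-\Delta_\G\phi\big),\qquad \rho'=\tau\big(\nabla_\G\cdot B(\rho,\nabla_\G\phi)+\Delta_\G\rho\big),\qquad \phi(1)=g(\rho(1)),\ \rho(0)=\mu .
\]
Define $\Phi:\R\times\cP_0(\G)\times\mathbb Y\to\mathbb Z$, where $\mathbb Y=C^1([0,1];\R^n)\times C^1([0,1];(0,1)^n)$ and $\mathbb Z=C^0([0,1];\R^n)^2\times\R^n\times\R^n_0$. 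The map $\Phi$ sends $(\tau,\mu,\phi,\rho)$ to the residuals of the two ODEs together with $\phi(1)-g(\rho(1))$ and $\rho(0)-\mu$. Because $H,B\in C^1$, $g\in C^1$, and $\rho$ stays in the open set $(0,1)^n$, the map $\Phi$ is $C^1$ as a Nemytskii-type operator. By Proposition~\ref{prop:mfg-classical}, $\cS(t,\mu)$ is the unique zero of $\Phi(T-t,\mu,\cdot)$, and the mass constraint keeps $\rho\in\cP_0(\G)$. Theorem~\ref{thm:main-interiority}, through Lemma~\ref{lem:rho lower bound mfg lambda}, keeps $\rho$ locally uniformly away from $\partial(0,1)^n$. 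This ensures the zero set stays in the region where $\Phi$ is $C^1$.

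The key step is to show that $D_{(\phi,\rho)}\Phi$ is an isomorphism $\mathbb Y\to\mathbb Z$ at every zero. It is a Fredholm operator of index zero: it is a linear ODE boundary value problem with $2n$ unknowns and $n+n$ boundary conditions, once the mass constraint on $\eta$ is matched with $\nu\in\R^n_0$. It therefore suffices to prove injectivity. So take $(\psi,\eta)$ solving the homogeneous version of~\eqref{eq:lin-mfg}, with $\nu=0$ and zero sources. Since $\nabla_\G\cdot$ and $\Delta_\G$ annihilate against $\mathbf 1$, we have $\sum_i\eta_i\equiv0$, so $\eta(s)\in\R^n_0$. Then compute $\frac{d}{ds}(\psi,\eta)$ using the integration by parts formula~\eqref{eq:integration by parts formula}; the $\Delta_\G$ terms cancel by symmetry. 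This gives
\[
\frac{d}{ds}(\psi,\eta)=(D_\mu H[\eta]+D_pH[\nabla_\G\psi],\eta)-(D_\mu B[\eta]+D_pB[\nabla_\G\psi],\nabla_\G\psi).
\]
Integrating over $[t,T]$ gives
\[
(Dg(\rho(T))[\eta(T)],\eta(T))-0=\int_t^T(\cdots)\,ds .
\]
The left side is $\ge0$ by the differential form of~\eqref{as:uniqueness 2}, obtained by differentiating the monotonicity of $g$. The right side is $<0$ unless $(\eta,\nabla_\G\psi)\equiv0$, by~\eqref{as:uniqueness 3}. Hence $\eta\equiv0$ and $\psi$ is spatially constant. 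The $\psi$-equation then forces $\dot\psi=0$, and $\psi(T)=0$ gives $\psi\equiv0$. In the case $\tau=0$, that is $t=T$, the linearization is trivially invertible, since it reduces to $\phi'=0$, $\rho'=0$ with the two boundary conditions.

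With invertibility in hand, the implicit function theorem yields a $C^1$ local solution branch. Uniqueness of zeros identifies this branch with $\cS$, so $\cS$ is $C^1$ on $[0,T]\times\cP_0(\G)$, including at $t=T$, where $\cS(T,\mu)=(g(\mu),\mu)$. Differentiating $\Phi(T-t,\mu,\cS(t,\mu))=0$ in $\mu$ along $\nu\in\R^n_0$ and undoing the rescaling gives exactly~\eqref{eq:lin-mfg} for $(\psi,\eta)=(\delta_\mu\phi^{t,\mu}[\nu],\delta_\mu\rho^{t,\mu}[\nu])$.

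I expect the main obstacle to be the invertibility/Fredholm step. Specifically, this means handling the mixed initial–terminal boundary conditions together with the constraint $\eta\in\R^n_0$, so that the function spaces match. It also requires justifying $C^1$ regularity of $\Phi$ uniformly near $t=T$, where the rescaling degenerates. To get the Fredholm property, I would first write the solution of the linearized problem via the fundamental matrix $\Psi(\theta)$ of the linear ODE. The problem then reduces to invertibility of a finite-dimensional $2n\times2n$ boundary matrix, whose kernel is trivial by the monotonicity computation above.
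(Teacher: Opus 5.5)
Your proposal is correct and follows the paper's proof essentially step for step. Both rescale to $[0,1]$ with $\tau=T-t$ ranging over $\R$, prove injectivity of the linearization from the duality identity for $(\psi,\eta)$ together with \eqref{as:uniqueness 3} and the differentiated form of \eqref{as:uniqueness 2}, get surjectivity by reducing to a finite-dimensional shooting/boundary matrix, apply the implicit function theorem (including the explicit inverse at $\tau=0$), and identify the branch with $\cS$ by uniqueness. The space mismatch you flagged is resolved as in the paper by letting $\mu$ range over an open subset of $(0,1)^n$ and taking the last target component in $\R^n$, and your pointwise use of strict monotonicity can replace the paper's compactness constant $\alpha$.
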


\begin{remark}
In Proposition~\ref{prop:C1-dependence}, we view $\cP_0(\G)$ as an open subset of the affine hyperplane $\big\{\mu \in \bR^n:\; \sum_i \mu^i = 1\big\}$, so $\delta_\mu \cS(t,\mu)$ acts on tangent vectors $\nu \in \R^n_0$; equivalently, one can work with an extension of $\cS$ to $\R \times (0,1)^n$ having the same continuity and regularity properties (this is done in the proof below). In particular, since $D_\mu \cS(t,\mu)[\nu]$ exists, so does $\delta_\mu \cS(t,\mu)[\nu]$, and~\eqref{eq:jan22.2026} relates the full gradient to the constrained one.
\end{remark}

\begin{proof}
Fix $(t_0,\mu_0) \in [0,T) \times \cP_0(\G)$. By Proposition~\ref{prop:mfg-classical}, the unique MFG solution $(\phi^{t_0,\mu_0}, \rho^{t_0,\mu_0})$ on $[t_0,T]$ satisfies $\rho^{t_0,\mu_0} \in C([t_0,T]; (0,1)^n)$. Hence the trajectory $\big(\rho^{t_0,\mu_0}(s), \nabla_\G \phi^{t_0,\mu_0}(s)\big)_{s \in [t_0,T]}$ lies in a compact subset $K \subset (0,1)^n \times \bS(n)$. The set $K \times \{(\eta, q) \in \bR^n_0 \times \bS(n) : |\eta|^2 + |q|^2 = 1\}$ is compact, and condition \eqref{as:uniqueness 3} holds with strict inequality there, so by homogeneity there exists $\alpha > 0$ such that for all $(\varrho, p) \in K$ and $(\eta, q) \in \bR^n_0 \times \bS(n)$, 
\begin{equation}\label{eq:strong monotonicity quant}
(D_\mu H(\varrho,p)[\eta] + D_p H(\varrho,p)[q], \eta) - (D_\mu B(\varrho,p)[\eta] + D_p B(\varrho,p)[q], q) \le -\alpha (|\eta|^2 + |q|^2).
\end{equation}
Differentiating the monotonicity condition \eqref{as:uniqueness 2} shows that for all $\varrho \in \cP_0(\G)$ and $\eta \in \R^n_0$,
\begin{equation}\label{eq:Dg monotonicity}
(\eta, Dg(\varrho)\eta) \ge 0.
\end{equation} 
Let $\mathbb X := C^1([0,1]; \R^n)^2$ and $\mathbb Y := C^0([0,1]; \R^n)^2 \times \R^n \times \R^n$. Define
\[
\mathbb X_* := \big\{(\tilde\phi, \tilde\rho) \in \mathbb X :\; \tilde\rho([0,1]) \subset (0,1)^n \big\},
\]
which is an open subset of $\mathbb X$. Let $\tau := T - t \in (0,T]$. For each $(t,\mu)$ near $(t_0,\mu_0)$, define rescaled time $\theta \in [0,1]$ via $s = t + \tau\theta$, and define rescaled functions
\[
\Big(\tilde\phi^{\tau,\mu}, \tilde\rho^{\tau,\mu}\Big)(\theta) := \big(\phi^{t,\mu},  \rho^{t,\mu}\big)(t + \tau\theta) \equiv \cS(T-\tau, \mu), 
\]
They satisfy the fixed-interval boundary value problem on $[0,1]$:
\begin{equation}\label{eq:rescaled-mfg-C1}
\begin{cases}
(\tilde\phi)' - \tau\big(H(\tilde\rho, \nabla_\G\tilde\phi) - \Delta_\G\tilde\phi\big) = 0, \\[1mm]
(\tilde\rho)' - \tau\big(\nabla_\G \cdot B(\tilde\rho, \nabla_\G\tilde\phi) + \Delta_\G\tilde\rho\big) = 0, \\[1mm]
\tilde\phi(1) = g(\tilde\rho(1)), \qquad \tilde\rho(0) = \mu,
\end{cases}
\end{equation}
where prime denotes $d/d\theta$.  Define the nonlinear map $\Phi : \bR \times (0,1)^n \times \mathbb X_* \to \mathbb Y$ by
\[
\Phi(\tau, \mu, \tilde\phi, \tilde\rho) := \Big( (\tilde\phi)' - \tau(H(\tilde\rho, \nabla_\G\tilde\phi) - \Delta_\G\tilde\phi), \; (\tilde\rho)' - \tau(\nabla_\G \cdot B(\tilde\rho, \nabla_\G\tilde\phi) + \Delta_\G\tilde\rho), \; \tilde\phi(1) - g(\tilde\rho(1)), \; \tilde\rho(0) - \mu \Big).
\]
We let $\tau$ range over $\bR$ instead of $[0,T]$ to ensure that $\tau=0$ and $\tau=T$ are interior points of the domain, for a later application of the implicit function theorem.
Then $(\tilde\phi, \tilde\rho)$ solves \eqref{eq:rescaled-mfg-C1} if and only if $\Phi(\tau, \mu, \tilde\phi, \tilde\rho) = 0$. Since $\nabla_\G$ and $\Delta_\G$ are bounded linear operators on $\R^n$, and $H, B, g$ are $C^1$, the map $\Phi$ is $C^1$. Let $\tau_0 := T - t_0$ and write $(\tilde\phi_0, \tilde\rho_0) := (\tilde\phi^{\tau_0,\mu_0}, \tilde\rho^{\tau_0,\mu_0})$ for brevity. Then $\Phi(\tau_0, \mu_0, \tilde\phi_0, \tilde\rho_0) = 0$.

We next establish invertibility of the partial derivative. Consider the linear operator
\[
L_0 := D_{(\tilde\phi, \tilde\rho)} \Phi(\tau_0, \mu_0, \tilde\phi_0, \tilde\rho_0) : \mathbb X \to \mathbb Y.
\]
Writing $\tilde p_0(\theta) := \nabla_\G\tilde\phi_0(\theta)$, for $(\tilde\psi, \tilde\eta) \in \mathbb X$ one computes
\begin{align*}
L_0(\tilde\psi, \tilde\eta) = \Big( & \tilde\psi' - \tau_0\big(D_\mu H(\tilde\rho_0, \tilde p_0)[\tilde\eta] + D_p H(\tilde\rho_0, \tilde p_0)[\nabla_\G\tilde\psi] - \Delta_\G\tilde\psi\big), \\
& \tilde\eta' - \tau_0\big(\nabla_\G \cdot (D_\mu B(\tilde\rho_0, \tilde p_0)[\tilde\eta] + D_p B(\tilde\rho_0, \tilde p_0)[\nabla_\G\tilde\psi]) + \Delta_\G\tilde\eta\big), \\
& \tilde\psi(1) - Dg(\tilde\rho_0(1))[\tilde\eta(1)], \; \tilde\eta(0) \Big).
\end{align*}
In particular, if $(\tilde\psi, \tilde\eta)$ solves $L_0(\tilde\psi, \tilde\eta) = (0, 0, 0, \nu)$ and we define $(\psi, \eta)(s) := (\tilde\psi, \tilde\eta)\big((s - t_0)/\tau_0\big)$ for $s \in [t_0, T]$, then $(\psi, \eta)$ solves the linearized system \eqref{eq:lin-mfg} with $(t,\mu) = (t_0,\mu_0)$.

To establish injectivity of $L_0$, suppose $L_0(\tilde\psi, \tilde\eta) = (0, 0, 0, 0)$. A direct computation gives
\[
\frac{d}{d\theta}(\tilde\eta, \tilde\psi) = (\tilde\eta', \tilde\psi) + (\tilde\eta, \tilde\psi')
\]
and using discrete integration by parts yields
\[
\frac{d}{d\theta}(\tilde\eta, \tilde\psi) = \tau_0 \Big[ (D_\mu H(\tilde\rho_0, \tilde p_0)[\tilde\eta] + D_p H(\tilde\rho_0, \tilde p_0)[\nabla_\G \tilde\psi], \tilde\eta) - (D_\mu B(\tilde\rho_0, \tilde p_0)[\tilde\eta] + D_p B(\tilde\rho_0, \tilde p_0)[\nabla_\G \tilde\psi], \nabla_\G \tilde\psi) \Big].
\]
Since $\tilde\eta(0) = 0$ and the right-hand side of the forward equation in \eqref{eq:lin-mfg} lies in $\bR^n_0$, we have $\tilde\eta(\theta) \in \bR^n_0$ for all $\theta$. By condition \eqref{eq:strong monotonicity quant}, and since $\tau_0 > 0$, this gives $\frac{d}{d\theta}(\tilde\eta, \tilde\psi) \le -\tau_0\alpha ( |\tilde\eta(\theta)|^2 + |\nabla_\G \tilde\psi(\theta)|^2 )$. Integrating from $0$ to $1$ and using $\tilde\eta(0) = 0$ yields
\[
(\tilde\eta(1), \tilde\psi(1)) \le -\tau_0\alpha \int_0^1 \big( |\tilde\eta|^2 + |\nabla_\G \tilde\psi|^2 \big) \, d\theta.
\]
But $\tilde\psi(1) = Dg(\tilde\rho_0(1))[\tilde\eta(1)]$, so by \eqref{eq:Dg monotonicity}, $(\tilde\eta(1), \tilde\psi(1)) \ge 0$. Therefore $\tilde\eta \equiv 0$ and $\nabla_\G \tilde\psi \equiv 0$. Since the graph $(\bV, \G, \omega)$ is connected, $\nabla_\G \tilde\psi \equiv 0$ implies $\tilde\psi_i(\theta) = \tilde\psi_j(\theta)$ for all $i, j \in \bV$ and $\theta \in [0,1]$. Substituting back into the homogeneous backward equation and recalling that $\tilde\eta \equiv 0$ yields $\tilde\psi' = 0$, so $\tilde\psi$ is constant in $\theta$ as well. Finally, $\tilde\psi(1) = Dg(\tilde\rho_0(1))[\tilde\eta(1)] = 0$ implies $\tilde\psi \equiv 0$, establishing that $\ker L_0 = \{0\}$.

For surjectivity, let $(f_1, f_2, \xi, \nu) \in \mathbb Y$ be given. We aim to find $(\tilde\psi,\tilde\eta) \in \mathbb{X}$ such that $L_0(\tilde\psi, \tilde\eta) = (f_1, f_2, \xi, \nu)$, i.e.,
\begin{equation}\label{eq:shooting-system}
\begin{cases}
\tilde\psi' = \tau_0\big(D_\mu H(\tilde\rho_0, \tilde p_0)[\tilde\eta] + D_p H(\tilde\rho_0, \tilde p_0)[\nabla_\G \tilde\psi] - \Delta_\G \tilde\psi\big) + f_1(\theta), \\
\tilde\eta' = \tau_0\big(\nabla_\G \cdot (D_\mu B(\tilde\rho_0, \tilde p_0)[\tilde\eta] + D_p B(\tilde\rho_0, \tilde p_0)[\nabla_\G \tilde\psi]) + \Delta_\G \tilde\eta\big) + f_2(\theta), \\
\tilde\psi(1) = Dg(\tilde\rho_0(1))[\tilde\eta(1)] + \xi, \qquad \tilde\eta(0) = \nu.
\end{cases}
\end{equation}
We use a shooting argument. For each $q \in \R^n$, let $(\tilde\psi^q, \tilde\eta^q)$ denote the unique $C^1$ solution of the first two equations in \eqref{eq:shooting-system} with initial data $\tilde\psi^q(0) = q$ and $\tilde\eta^q(0) = \nu$; existence and uniqueness follow from continuity of the coefficients in $\theta$. Define the shooting map
\[
\mathcal{M}(q) := \tilde\psi^q(1) - Dg(\tilde\rho_0(1))[\tilde\eta^q(1)] - \xi \in \R^n.
\]
Since the ODE is linear in $(\tilde\psi, \tilde\eta)$, the map $\mathcal{M}$ is affine in $q$. Its linear part $\mathcal{M}_0$ is obtained by setting $f_1 = f_2 = \nu = 0$: for each $q \in \R^n$, let $(\tilde\psi^q_0, \tilde\eta^q_0)$ solve the homogeneous system with $\tilde\eta^q_0(0) = 0$ and $\tilde\psi^q_0(0) = q$, and set $\mathcal{M}_0(q) := \tilde\psi^q_0(1) - Dg(\tilde\rho_0(1))[\tilde\eta^q_0(1)]$. If $\mathcal{M}_0(q) = 0$, then $(\tilde\psi^q_0, \tilde\eta^q_0) \in \ker L_0$, so by injectivity of $L_0$ we have $q = 0$. Thus $\mathcal{M}_0 : \R^n \to \R^n$ is injective, hence bijective, and the affine equation $\mathcal{M}(q) = 0$ has a unique solution $q^*$. The corresponding $(\tilde\psi^{q^*}, \tilde\eta^{q^*})$ is the unique solution of \eqref{eq:shooting-system}, establishing surjectivity. By the open mapping theorem, $L_0$ is an isomorphism.

By the implicit function theorem \cite[Thm.~15.1, Cor.~15.1]{Deimling}, there exists an open neighborhood $\cO \subset \bR \times (0,1)^n$ of $(\tau_0, \mu_0)$ and a unique $C^1$ map 
\[
(\tau, \mu) \mapsto (\tilde\phi^{\tau,\mu}, \tilde\rho^{\tau,\mu}) \in \mathbb X
\]
such that $\Phi(\tau, \mu, \tilde\phi^{\tau,\mu}, \tilde\rho^{\tau,\mu}) = 0$ for all $(\tau, \mu) \in \cO$. Undoing the change of variables $\tau = T - t$, we find that the map $(t,\mu) \mapsto (\phi^{t,\mu}, \rho^{t,\mu})$ is $C^1$ as a map into $C^1$ trajectories (locally around every $(t_0, \mu_0)$), and in particular, the ``initial costate'' $u(t,\mu) := \phi^{t,\mu}(t) = \tilde\phi^{\tau,\mu}(0)$ is a $C^1$ function of $(t,\mu)$. Restricting to $\big((0,T] \times \cP_0(\G)\big) \cap \cO$, we obtain a $C^1$ branch of solutions parametrized by initial time and measure. By uniqueness of solutions, this branch coincides with the MFG solution map. Since $(t_0, \mu_0) \in [0,T) \times \cP_0(\G)$ was arbitrary, $\cS$ is $C^1$ on $[0,T) \times \cP_0(\G)$.

Differentiating the identity $\Phi(\tau, \mu, \tilde\phi^{\tau,\mu}, \tilde\rho^{\tau,\mu}) = 0$ in the direction $\nu$ (with $\tau$ fixed) gives
\[
0 = D_\mu \Phi[\nu] + L_{\tau,\mu} \big( D_\mu(\tilde\phi^{\tau,\mu}, \tilde\rho^{\tau,\mu})[\nu] \big),
\]
where $L_{\tau,\mu} := D_{(\tilde\phi, \tilde\rho)} \Phi(\tau, \mu, \tilde\phi^{\tau,\mu}, \tilde\rho^{\tau,\mu})$.
Since $\Phi$ depends on $\mu$ only through the last component $\tilde\rho(0) - \mu$, we have $D_\mu \Phi[\nu] = (0, 0, 0, -\nu)$. Therefore $(\tilde\psi, \tilde\eta) := D_\mu\cS(t,\mu)[\nu]$ solves $L_{\tau,\mu}(\tilde\psi, \tilde\eta) = (0, 0, 0, \nu)$. Defining $(\psi, \eta)(s) := (\tilde\psi, \tilde\eta)\big((s-t)/(T-t)\big)$ for $s \in [t,T]$, we obtain that $(\psi, \eta)$ solves the linearized system \eqref{eq:lin-mfg}.

Finally, we extend the regularity up to $t=T$ (i.e., $\tau=0$). Fix $\mu_0 \in \cP_0(\G)$. When $\tau=0$, the rescaled system \eqref{eq:rescaled-mfg-C1} reduces to $(\tilde\phi)'=0$, $(\tilde\rho)'=0$ with $\tilde\phi(1)=g(\tilde\rho(1))$, $\tilde\rho(0)=\mu$; the unique solution is $\tilde\phi \equiv g(\mu_0)$, $\tilde\rho \equiv \mu_0$, which gives $\cS(T,\mu_0)(\theta)=(g(\mu_0),\mu_0)$. Set $\tilde\phi_0 \equiv g(\mu_0)$, $\tilde\rho_0 \equiv \mu_0$. Then $\Phi(0, \mu_0, \tilde\phi_0, \tilde\rho_0) = 0$. Since the nonlinear terms in $\Phi$ are multiplied by $\tau$, the linearization at $\tau=0$ is
\[
L_0(\tilde\psi, \tilde\eta) = \big( \tilde\psi', \, \tilde\eta', \, \tilde\psi(1) - Dg(\mu_0)[\tilde\eta(1)], \, \tilde\eta(0) \big).
\]
The inverse $L_0^{-1}(f_1, f_2, \xi, \nu) = (\tilde\psi, \tilde\eta)$ is given explicitly by $\tilde\eta(\theta) := \nu + \int_0^\theta f_2$ and $\tilde\psi(\theta) := q + \int_0^\theta f_1$ where $q := \xi + Dg(\mu_0)[\nu + \int_0^1 f_2] - \int_0^1 f_1$. Arguing as before with the implicit function theorem, we deduce that $\cS$ is of class $C^1$ on $[0,T] \times \cP_0(\G)$.
\end{proof}
 When $t\leq s$, it is convenient to use the notation 
\begin{equation}\label{eq:defn-rho-v}
\rho(s, t, \mu):=\rho^{t, \mu}(s), \quad \phi(s, t, \mu):=\phi^{t, \mu}(s).
\end{equation}
%
%
\begin{lemma}\label{lem:jan24.2026.6} For $\mu \in \cP_0(\G)$ and $0\leq t \leq t_1 \leq s < T$,
\[
\rho(s, t, \mu)=\rho\big(s, t_1, \rho(t_1, t, \mu)\big), \quad  \phi(s, t, \mu)=\phi\big(s, t_1, \rho(t_1, t, \mu)\big).
\]
\end{lemma}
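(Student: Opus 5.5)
This is a flow (semigroup) property, and I will derive it from uniqueness for the MFG system in Proposition~\ref{prop:mfg-classical}. Since Lemma~\ref{lem:jan24.2026.6} is stated inside the master-equation discussion, I assume the Lasry--Lions conditions \eqref{as:uniqueness 1}--\eqref{as:uniqueness 2}, which make the solution $(\phi^{t,\mu},\rho^{t,\mu})$ unique. The idea is that restricting a solution to a later subinterval gives another solution, which must then be the unique one.

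Fix $0\le t\le t_1\le s<T$ and $\mu\in\cP_0(\G)$. Set $\nu:=\rho(t_1,t,\mu)=\rho^{t,\mu}(t_1)$.

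\begin{enumerate}
\item[(1)] By Proposition~\ref{prop:mfg-classical}, $\rho^{t,\mu}$ takes values in $(0,1)^n$. Mass is conserved, because the right-hand side of the $\rho$-equation is a divergence plus $\Delta_\G\rho$, and both sum to zero. Hence $\nu\in\cP_0(\G)$, so $(\phi^{t_1,\nu},\rho^{t_1,\nu})$ is well defined. The case $t_1=t$ is trivial.

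\item[(2)] For $t_1>t$, let $(\bar\phi,\bar\rho)$ be the restriction of $(\phi^{t,\mu},\rho^{t,\mu})$ to $[t_1,T]$. The ODEs in \eqref{eq:mfg} are pointwise in time, so they hold on $(t_1,T)$. The terminal condition $\bar\phi(T)=g(\bar\rho(T))$ is unchanged, and $\bar\rho(t_1)=\nu$. Regularity is inherited: $(\bar\phi,\bar\rho)\in C^2([t_1,T];\R^n\times(0,1)^n)$. So $(\bar\phi,\bar\rho)$ is a classical solution of \eqref{eq:mfg} with initial data $(t_1,\nu)$.

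\item[(3)] Uniqueness from Proposition~\ref{prop:mfg-classical}, which rests on the Lasry--Lions computation together with connectedness of $\G$, gives $(\bar\phi,\bar\rho)=(\phi^{t_1,\nu},\rho^{t_1,\nu})$ on $[t_1,T]$. Evaluating at $s$ yields both identities.
\end{enumerate}

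There is no real obstacle here. The only points needing care are that uniqueness is genuinely available under the standing assumptions, and that $\nu$ stays in the open simplex. The latter follows from Theorem~\ref{thm:main-interiority} through Lemma~\ref{lem:rho lower bound mfg lambda}, so that $\nu$ is admissible initial data.
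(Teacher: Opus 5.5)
Your proposal is correct and takes the same approach as the paper. Both arguments note that the restriction of $(\phi^{t,\mu},\rho^{t,\mu})$ to $[t_1,T]$ solves \eqref{eq:mfg} with initial measure $\rho(t_1,t,\mu)$ at time $t_1$, and then conclude by the uniqueness in Proposition~\ref{prop:mfg-classical}.
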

\proof{} Both $\big(\rho(\cdot, t, \mu), \phi(\cdot, t, \mu)\big)$ and $\big(\rho(\cdot, t_1, \rho(t_1, t, \mu)), \phi(\cdot, t_1, \rho(t_1, t, \mu))\big)$ satisfy \eqref{eq:mfg} on $[t_1, T]$ with initial measure $\rho(t_1, t, \mu)$ at time $t_1$. The result follows from uniqueness in Proposition~\ref{prop:mfg-classical}. \endproof

\begin{proof}[Proof of Theorem~\ref{thm:main-mfg}]
The existence and uniqueness of a classical solution to~\eqref{eq:mfg} is Proposition~\ref{prop:mfg-classical}. It remains to show that the value function $u(t,\mu):=\phi(t,t,\mu)$ belongs to $C^1([0,T]\times\cP_0(\G);\R^n)$ and is the unique classical solution of~\eqref{eq:master}.
By Proposition~\ref{prop:C1-dependence}, $\cS$ is $C^1$ on $[0,T]\times\cP_0(\G)$. Since $u(t,\mu)=\cS(t,\mu)(0)$ and evaluation at $\theta=0$ is a bounded linear functional on $C^1([0,1];\R^n)$, $u\in C^1([0,T]\times\cP_0(\G);\R^n)$ and $u(T,\cdot)=g$.

Fix $(t, \mu) \in (0,T) \times \cP_0(\G).$ If $h>0$ is small enough, then by Lemma \ref{lem:jan24.2026.6} 
\[
u\big(t+h, \rho(t+h, t, \mu)\big)=\phi\big(t+h, t+h, \rho(t+h, t, \mu)\big)= \phi(t+h, t, \mu).
\]
This allows us to integrate the first identity in \eqref{eq:mfg} over $[t, t+h]$ to obtain
\[
{u\big(t+h, \rho(t+h, t, \mu)\big)-u(t, \mu) \over h}={1\over h}\int_t^{t+h} \Big( H\big(\rho(\tau, t, \mu),\nabla_{\G}\phi(\tau, t, \mu)\big)-\Delta_{\G}\phi(\tau, t, \mu)\Big) d\tau.
\]
By continuity of the integrand, we obtain 
\begin{align} 
\lim_{h \to 0^+} {u\big(t+h, \rho(t+h, t, \mu)\big)-u(t, \mu) \over h} 
=& H(\mu,\nabla_{\G}u(t, \mu))-\Delta_{\G}u(t, \mu).  \label{eq:jan24.2026.4}
\end{align}
Since $\rho(t, t, \mu) = \mu$, the chain rule, integration by parts, and the second identity in \eqref{eq:mfg} give
\begin{align}
& u\big(t+h, \rho(t+h, t, \mu)\big)-u(t+h,\mu)\nonumber\\
=&- \int_t^{t+h} \Big( \nabla_\cW u\big(t+h, \rho(\tau, t, \mu)\big), B\big(\rho(\tau, t, \mu),\nabla_{\G}\phi(\tau, t, \mu)\big)+\nabla_{\G}\rho(\tau, t, \mu) \Big)d\tau\nonumber\\
=&- \int_t^{t+h} \Big( \nabla_\cW u\big(t, \rho(\tau, t, \mu)\big), B\big(\rho(\tau, t, \mu),\nabla_{\G}\phi(\tau, t, \mu)\big)+\nabla_{\G}\rho(\tau, t, \mu) \Big)d\tau
\label{eq:jan24.2026.7}\\
&- \int_t^{t+h} \Big(a(\tau, t, h, \mu) , B\big(\rho(\tau, t, \mu),\nabla_{\G}\phi(\tau, t, \mu)\big)+\nabla_{\G}\rho(\tau, t, \mu) \Big)d\tau,\nonumber
\end{align}
where $a(\tau, t, h, \mu):=\nabla_\cW u\big(t+h, \rho(\tau, t, \mu)\big)-\nabla_\cW u\big(t, \rho(\tau, t, \mu)\big)$. Since $u \in C^1$,
\[
\lim_{h \to 0} {1\over h} \int_t^{t+h} \Big(a(\tau, t, h, \mu) , B(\rho(\tau, t, \mu),\nabla_{\G}\phi(\tau, t, \mu))+\nabla_{\G}\rho(\tau, t, \mu) \Big)d\tau=0
\]
and so, by continuity of the integrand in \eqref{eq:jan24.2026.7}, we conclude that 
\begin{align} 
 \lim_{h \to 0}  {u\big(t+h, \rho(t+h, t, \mu)\big)-u(t+h,\mu)\over h}
=&- \Big( \nabla_\cW u(t, \mu), B(\mu ,\nabla_{\G}u(t, \mu))+\nabla_{\G}\mu\Big)\label{eq:jan24.2026.5}
\end{align}
Combining \eqref{eq:jan24.2026.4} and \eqref{eq:jan24.2026.5}, we have 
\begin{align*} 
\partial_t u(t, \mu)=&\lim_{h \to 0^+} {u(t+h,\mu)-u\big(t+h, \rho(t+h, t, \mu)\big) +u\big(t+h, \rho(t+h, t, \mu)\big)-u(t,\mu)\over h}\\
=& \big( \nabla_\cW u(t, \mu), B(\mu ,\nabla_{\G}u(t, \mu))\big)-\Delta_\text{ind}u(t,\mu)+ H(\mu,\nabla_{\G}u(t, \mu))-\Delta_{\G}u(t, \mu),
\end{align*}
where we used $\Delta_\text{ind}u(t,\mu)=-(\nabla_\cW u(t,\mu), \nabla_\G\mu)$.

Next, we prove uniqueness. Let $v \in C^1([0,T] \times \cP_0(\G); \R^n)$ be another classical solution to \eqref{eq:master}. Fix $(t, \mu) \in [0,T) \times \cP_0(\G)$ and let $(\phi(\cdot), \rho(\cdot)):=(\phi(\cdot,t,\mu),\rho(\cdot,t,\mu))$ be the unique classical solution of \eqref{eq:mfg} on $[t,T]$ with initial measure $\mu$ at time $t$.
Moreover, we define the function $z:[t,T] \to \R^n$ by
\[z(s):=v(s, \rho(s)).\]
Using the chain rule, we compute for $s \in (t,T)$,
\begin{align*}
\dot z(s)= &\partial_t v(s, \rho(s)) +  \delta_\mu v (s, \rho(s))[\dot \rho(s)]\\
= &\partial_t v(s, \rho(s)) - \big( \nabla_\cW v(s, \rho(s)), B(\rho(s),\nabla_{\G}\phi(s))+\nabla_{\G}\rho(s) \big).
\end{align*}
Now, using the fact that $v$ solves the master equation, we obtain
\begin{align*}
\dot z(s) =& \big(\nabla_\cW v, B(\rho, \nabla_\G v)\big) - \Delta_{\text{ind}} v + H(\rho, \nabla_\G v) - \Delta_\G v - \big(\nabla_\cW v, B(\rho, \nabla_\G\phi) + \nabla_\G\rho\big)\\
= & \big(\nabla_\cW v, B(\rho, \nabla_\G v)\big) + \big(\nabla_\cW v, \nabla_\G\rho\big) + H(\rho, \nabla_\G v) - \Delta_\G v - \big(\nabla_\cW v, B(\rho, \nabla_\G\phi) + \nabla_\G\rho\big)\\
= & \big(\nabla_\cW v, B(\rho, \nabla_\G v) - B(\rho, \nabla_\G\phi)\big) + H(\rho, \nabla_\G v) - \Delta_\G v.
\end{align*}
Letting $\eta(s)=z(s)-\phi(s)$ for $s \in [t,T]$ and $\Gamma(s)=\nabla_{\cW}v(s, \rho(s))$, we have $\eta(T)=0$ and
\begin{align*}
\dot \eta(s) &= \big(\Gamma, B(\rho, \nabla_\G\phi + \nabla_\G\eta) - B(\rho, \nabla_\G\phi)\big) + H(\rho, \nabla_\G\phi + \nabla_\G\eta) - H(\rho, \nabla_\G\phi) - \Delta_\G\eta=: f(s, \eta(s)).
\end{align*}
Since $\eta \equiv 0$ is a solution to this ODE (with terminal condition $\eta(T) = 0$), it must be the only solution due to the uniqueness coming from the regularity of $f$. Hence $v(s, \rho(s)) = \phi(s)$ for all $s \in [t,T]$. In particular, $v(t, \mu) = \phi(t) = u(t, \mu)$.
\end{proof}
%
%
%
%
\section{Classical solutions to the HJB equation on \texorpdfstring{$\cP_0(\G)$}{P\_0(G)}}\label{sec:ClassicalHJE}
%
\subsection{Assumptions}\label{subsection:assumptions-g}
Throughout this section, $\cL:[0,+\infty)^n\times\bS(n)\to\bR\cup\{+\infty\}$, $\cF:[0,1]^n\to\bR$, and $\cU_T:[0,1]^n\to\bR$ are assumed to satisfy the hypotheses collected below, in which $C_\cL>0$ is a constant, $p_0>1$ is a growth exponent with conjugate $p_0':=p_0/(p_0-1)$, and $a:(0,\infty)\times\bS(n)\to[0,\infty)$ is a locally bounded function. On the Lagrangian $\cL$:
\begin{align}
& \cL \in C^\infty\big((0,+\infty)^n \times \bS(n)\big), \quad \cL(\mu,  0)=0, \label{eq:assump2L}\\
& \cL\; \text{is (jointly) convex and (jointly) lower semicontinuous},\label{eq:assump1L}\\
& D_{mm}^2 \cL(\mu, m)>0\quad \forall (\mu, m) \in (0,1)^n \times \bS(n),\label{eq:assump1LDec2025}\\
& \cL(\mu, m)\geq C_{\cL}\big(\|m\|_{\ell_2}^{p_0}-1\big)\quad \forall (\mu,m)\in \cP(\bG)\times \bS(n),\label{eq:may24.2025.3}\\
& (m, D_m \cL(\mu, m)) + (\mu, D_\mu \cL(\mu, m)) \;\text{and}\; \min_i -D_{\mu^i} \cL(\mu, m)\; \text{are bounded from below}.\label{eq:assump2Llower}
\end{align}
On the Hamiltonian $\cH(\mu,\cdot)$, defined as the Legendre transform of $\cL(\mu,\cdot)$:
\begin{align}
& \cH \in C^\infty((0,1)^n \times \bS(n)),\label{eq:dec20.2025.2}\\
& |D_{p^{ij}} \cH(\mu, p)|\leq (\mu^i+\mu^j) a\Big({\mu^j \over \mu^i},p \Big)\quad \forall (\mu, p) \in (0,1)^n \times \bS(n),\label{eq:dec21.2025.1}\\
& \lim_{u\to 0}a(u,p) = \lim_{u\to \infty}a(u,p)=0 \;\text{ locally uniformly in } p\in \bS(n).\label{eq:dec21.2025.3}
\end{align}
On the source $\cF$ and terminal cost $\cU_T$:
\begin{align}
& \cF \in C^2([0,1]^n) \cap C^\infty((0,1)^n), \quad D^2 \cF < 0 \text{ on } (0,1)^n,\label{eq:assumpF}\\
& \cU_T\in C^2([0,1]^n) \cap C^\infty((0,1)^n), \quad \cU_T|_{\cP(\bG)} \text{ is convex}.\label{eq:assumpcUT}
\end{align}

For instance, $\cL_\theta$ satisfies \eqref{eq:assump2L}, \eqref{eq:assump1L} and the first identity in \eqref{eq:assump2Llower}. The fact that $\partial_1 \theta \geq 0$ implies that $\cL_\theta$ satisfies the second identity in \eqref{eq:assump2Llower}. Finally, \eqref{eq:dec21.2025.2} yields \eqref{eq:dec21.2025.1}.

By Legendre duality, for every $\mu\in\cP_0(\G)$ and $w\in\bS(n)$, setting $p:=D_m\cL(\mu,w)$ yields
\begin{equation}\label{eq:aug23.2025.1}
D_\mu\cL(\mu,w)=-D_\mu\cH(\mu,p).
\end{equation}
\begin{remark} \label{rem:dec20.2025.1}
For every $\varrho \in (0,1)^n$, $p \in \bS(n)$, and $(\eta, q) \in (\bR^n_0 \times \bS(n)) \setminus \{(0,0)\}$,
\begin{equation}\label{eq:strong monotonicity from assumptions}
(\eta, D_{\mu\mu} \cH(\varrho,p)[\eta] + D^2\cF(\varrho)[\eta]) < (q, D_{pp}\cH(\varrho,p)[q]).
\end{equation}
Indeed, \eqref{eq:assump1LDec2025} and Legendre duality give $D_{pp}\cH=(D_{mm}^2\cL)^{-1}>0$; joint convexity~\eqref{eq:assump1L} and preservation of convexity under partial minimization give $D_{\mu\mu}\cH\leq 0$; and \eqref{eq:assumpF} gives $D^2\cF<0$. For $\eta\neq 0$, the left-hand side is at most $(\eta,D^2\cF[\eta])<0\leq(q,D_{pp}\cH[q])$; for $\eta=0$, the left-hand side vanishes while $q\neq 0$ makes the right-hand side positive.
\end{remark}

We shall apply the theory developed in Section \ref{sec:classical} by setting $F:= D_\mu \cF$, $U_T:= D_\mu \cU_T$, and
\begin{equation}\label{eq:define-all-functions}
H(\mu, p):=D_\mu\cH(\mu, -p)+F(\mu), \;\; B(\mu, p)=-D_p\cH(\mu, -p), \;\; g(\mu)=U_T(\mu).
\end{equation}
By \eqref{eq:assumpF} and \eqref{eq:assumpcUT}, $F$ and $U_T$ are of class $C^1$, so by \eqref{eq:dec20.2025.2} we have $H \in C^1((0,1)^n \times \bS(n);\R^n)$ and $B\in C^1((0,1)^n\times \bS(n);\bS(n))$. Furthermore, $g = U_T \in C^1([0,1]^n)$. By Remark~\ref{rem:dec20.2025.1}, condition~\eqref{eq:strong monotonicity from assumptions} holds; since $(\eta, DF[\eta]) = (\eta, D^2\cF[\eta])$ for $\eta \in \bR^n_0$, this implies \eqref{as:uniqueness 3} and hence \eqref{as:uniqueness 1}. Assumption \eqref{eq:assump2Llower} implies that \eqref{as:B coercivity} and \eqref{as:H lower bound} hold, and \eqref{eq:mobility-dominated flux} follows from \eqref{eq:dec21.2025.1}.
\begin{remark}[H\"older regularity from coercivity]\label{rem:dec22.2025.1}
Assume \eqref{eq:may24.2025.3}. Let $(\rho,m)\in \cC_t^T(\mu,\cdot)$ and set $w:=m+\nabla_\bG\rho$. Then
\[
\int_t^T \|w(s)\|_{\ell_2}^{p_0}\,ds
\le \frac{1}{C_{\cL}}\int_t^T \cL(\rho(s),w(s))\,ds + (T-t).
\]
Since $\rho(s)\in \cP(\bG)$ for all $s$, $\|\nabla_\bG\rho(s)\|_{\ell_2}$ is bounded uniformly in $s$ by a constant depending only on $(\bG,\omega)$; hence
\[
\int_t^T \|m(s)\|_{\ell_2}^{p_0}\,ds
\le C\Big(1+\int_t^T \cL(\rho(s),m(s)+\nabla_\bG\rho(s))\,ds\Big),
\]
for a constant $C$ depending only on $p_0,C_{\cL},T$ and the graph data.
Moreover, the continuity equation implies $\dot\rho=-\nabla_\bG\cdot m$ in the distributional sense and thus
$\|\dot\rho(s)\|_{\ell_2}\le C_{\rm div}\|m(s)\|_{\ell_2}$ for a.e.\ $s$, where $C_{\rm div}$ depends only on $(\G,\omega)$.
Consequently $\dot\rho\in L^{p_0}(t,T;\ell_2)$ and for all $s_1,s_2\in[t,T]$,
\[
\|\rho(s_2)-\rho(s_1)\|_{\ell_2}
\le |s_2-s_1|^{1/p_0'}\|\dot\rho\|_{L^{p_0}(t,T;\ell_2)}.
\]
In particular, $\rho$ is $1/p_0'$--H\"older continuous in time.
\end{remark}

%
%
\subsection{Existence and uniqueness of a minimizer for \texorpdfstring{$\cU(t, \mu)$}{U(t,mu)}}
We set  
\begin{equation}\label{eq:june15.2025.1}
\iota_0:=-\max \cU_T^- -T \max |\cF| - TC_{\cL}, \qquad \iota^T:=T \max_{\cP(\bG)} |\cF|+ \max_{\cP(\bG)} \cU_T.
\end{equation}

\begin{proposition}[Existence of a unique minimizer in $\cC_t^T(\mu, \cdot)$]\label{prop:action-minimizer} Let $\mu \in \cP(\bG)$ and  $t\in [0,T).$ Then, the following hold: 
\begin{enumerate}
\item[(i)]  $ \iota_0 \leq \cU(t, \mu) \leq \iota^T$.
\item[(ii)] There exists a unique $(\rho, m)$ that minimizes $(\bar \rho, \bar m) \to \cA_t^T(\rho, \bar m)+\cU_T(\bar \rho(T))$ over $\cC_t^T(\mu, \cdot)$.  
\item[(iii)] $\cU(t, \cdot)$ is strictly convex on $\cP(\bG).$
\item[(iv)] There is a constant $R>0$ depending only on  $\max |\cF|$, $\max |\cU_T|$, $p_0$, $C_{\cL}$ and $T$ (but independent of $t$ or $\mu$) such that if   $(\rho, m)$ is as in (ii), then $\|\dot \rho\|_{L^{p_0}(t, T)} \leq R.$
\end{enumerate}
\end{proposition}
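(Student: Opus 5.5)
The plan is to prove (i) by explicit competitors and a pointwise lower bound, (ii) by the direct method plus strict convexity, (iii) by convex interpolation of minimizers, and (iv) by combining (i) with Remark~\ref{rem:dec22.2025.1}.

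\textbf{(i) Bounds on $\cU(t,\mu)$.}
For the upper bound I will use the heat-flow competitor. Let $\rho$ solve $\dot\rho=\Delta_\G\rho$ with $\rho(t)=\mu$, and set $m:=-\nabla_\G\rho$.
\begin{itemize}
\item By the integration by parts formula \eqref{eq:integration by parts formula}, $\nabla_\G\cdot\nabla_\G=\Delta_\G$. Hence $\dot\rho+\nabla_\G\cdot m=0$, so the continuity equation holds.
\item The heat flow preserves $\cP(\bG)$, so $\rho\in C([t,T];\cP(\bG))$ and $(\rho,m)\in\cC_t^T(\mu,\cdot)$.
\item Since $m+\nabla_\G\rho\equiv 0$ and $\cL(\cdot,0)=0$ by \eqref{eq:assump2L}, the action reduces to $-\int_t^T\cF(\rho)\,ds+\cU_T(\rho(T))\le \iota^T$.
\end{itemize}
For the lower bound, \eqref{eq:may24.2025.3} gives $\cL\ge -C_\cL$ pointwise. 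Therefore every admissible pair has cost at least $-TC_\cL-T\max|\cF|-\max\cU_T^-=\iota_0$.

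\textbf{(ii) Existence and uniqueness of a minimizer.}
Take a minimizing sequence $(\rho_k,m_k)$; by (i) its costs are bounded above.
\begin{itemize}
\item Remark~\ref{rem:dec22.2025.1} then bounds $\|m_k\|_{L^{p_0}}$ and gives a uniform $1/p_0'$-H\"older bound on $\rho_k$.
\item Since $p_0>1$, a subsequence satisfies $m_k\rightharpoonup m$ weakly in $L^{p_0}$. By Arzel\`a--Ascoli, $\rho_k\to\rho$ uniformly.
\item The continuity equation passes to the limit in distributions, and $\rho(t)=\mu$.
\item The terms $\int_t^T\cF(\rho_k)\,ds$ and $\cU_T(\rho_k(T))$ converge by continuity of $\cF$ and $\cU_T$.
\item The term $\int_t^T\cL(\rho_k,m_k+\nabla_\G\rho_k)\,ds$ is lower semicontinuous under (strong in $\rho$, weak in $w$) convergence. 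This is the standard Ioffe/Tonelli result for a jointly convex, lower semicontinuous, bounded-below integrand, \eqref{eq:assump1L}. So the limit is a minimizer.
\end{itemize}
For uniqueness, note three convexity facts:
\begin{itemize}
\item The constraint set is convex, since the continuity equation is linear.
\item The map $(\rho,m)\mapsto \cL(\rho,m+\nabla_\G\rho)$ is jointly convex, being a convex function composed with a linear map.
\item $-\cF$ is convex, strictly so by \eqref{eq:assumpF}, and $\cU_T|_{\cP(\bG)}$ is convex by \eqref{eq:assumpcUT}.
\end{itemize}
Suppose two minimizers $(\rho_1,m_1)\neq(\rho_2,m_2)$ are given, and average them. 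There are two cases.
\begin{itemize}
\item If $\rho_1\ne\rho_2$, they differ on a time interval by continuity, and strict concavity of $\cF$ gives a strictly smaller cost. This contradicts minimality.
\item If $\rho_1=\rho_2$, then $m_1\ne m_2$ on a set of positive measure, and strict convexity of $\cL(\rho,\cdot)$ from \eqref{eq:assump1LDec2025} gives the contradiction.
\end{itemize}

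\textbf{(iii) Strict convexity of $\cU(t,\cdot)$.}
Take minimizers $(\rho_0,m_0)$ from $\mu_0$ and $(\rho_1,m_1)$ from $\mu_1\neq\mu_0$. The convex combination $(\rho_\lambda,m_\lambda)$ is admissible from $\mu_\lambda$, so
\[
\cU(t,\mu_\lambda)\le (1-\lambda)\,\cU(t,\mu_0)+\lambda\,\cU(t,\mu_1),
\]
by the same convexity facts as in (ii). Strictness holds because $\rho_0(t)\neq\rho_1(t)$, so $\rho_0\neq\rho_1$ on an interval $[t,t+\delta)$, where $-\cF$ is strictly convex.

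\textbf{(iv) Bound on $\dot\rho$.}
Combine (i) with the bounds on $\cF$ and $\cU_T$ to get
\[
\int_t^T\cL(\rho,m+\nabla_\G\rho)\,ds\le \iota^T+T\max|\cF|+\max|\cU_T|.
\]
Remark~\ref{rem:dec22.2025.1} then gives $\|\dot\rho\|_{L^{p_0}(t,T)}\le R$, with $R$ independent of $t$ and $\mu$.

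\textbf{Main obstacle: strictness near the boundary.}
The difficulty is strictness when the curves touch $\partial\cP(\bG)$. The hypotheses $D^2\cF<0$ and $D^2_{mm}\cL>0$ are only assumed on the open cube.
\begin{itemize}
\item For the case $\rho_1\neq\rho_2$, I will note that the segment between two distinct points of $\cP(\bG)$ meets $(0,1)^n$ unless both points lie in a common face. In that case I would argue on the relative interior of the face, or use that $\cF\in C^2$ on the closed cube to get strict concavity along segments.
\item For the case $\rho_1=\rho_2$ with $m_1\neq m_2$, consider edges where $\theta_{ij}(\rho)=0$. There finiteness of the action forces $w^{ij}=0$ in the model case. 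In general I would fall back on strict convexity of $\cL(\rho,\cdot)$ extended to the closure, or on (iii)-type arguments only at interior times.
\end{itemize}
This is the step I expect to need the most care.
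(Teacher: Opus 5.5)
Your argument follows the paper's proof essentially step for step. It uses the heat-flow competitor together with $\cL\ge -C_\cL$ for (i), the direct method via Remark~\ref{rem:dec22.2025.1} for existence, strict convexity of the cost plus existence for uniqueness and (iii), and (i) with Remark~\ref{rem:dec22.2025.1} for (iv). Your two-case uniqueness argument is in fact more careful than the paper's, which cites only strict convexity of $-\cF$. When $\bG$ has a cycle, two pairs with the same $\rho$ can have fluxes differing by a nonzero divergence-free field, and only \eqref{eq:assump1LDec2025} rules this out.

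The boundary issue you flag is real, and the paper's proof does not address it. However, neither of your patches closes it.
\begin{itemize}
\item $\cF\in C^2([0,1]^n)$ only gives $D^2\cF\le 0$ on faces, and $\cF$ can be affine on a whole face. For $n=3$, $\cF(\mu)=-(\mu^1)^2\big(1+((\mu^2)^2+(\mu^3)^2)/8\big)$ has $D^2\cF<0$ on $(0,1)^3$ yet vanishes on $\{\mu^1=0\}$. So arguing in the relative interior of a face gains nothing.
\item Strict convexity of $\cL(\rho,\cdot)$ at boundary $\rho$ is not a hypothesis.
\end{itemize}
The missing idea is that any finite-cost admissible curve spends only a Lebesgue-null set of times on $\partial\cP(\bG)$. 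Granted this, both of your cases and your proof of (iii) need \eqref{eq:assumpF} and \eqref{eq:assump1LDec2025} only at a.e.\ time, where they hold.

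Step 1: $\cL(\mu,w)=+\infty$ whenever $\mu\in\cP(\bG)$, $(i,j)\in\bE$, $\mu^i\mu^j=0$ and $w^{ij}\neq 0$.
\begin{itemize}
\item Put $\mu_\lambda:=(1-\lambda)\frac{{\bf 1}}{n}+\lambda\mu$, and let $E\in\bS(n)$ be supported on $\{(i,j),(j,i)\}$ with $E^{ij}=\operatorname{sign}w^{ij}$.
\item Fenchel--Young gives $\cL(\mu_\lambda,w)\ge s|w^{ij}|-\cH(\mu_\lambda,sE)$.
\item By \eqref{eq:may24.2025.3}, $\cH(\mu_\lambda,0)=-\inf\cL(\mu_\lambda,\cdot)\le C_\cL$.
\item By \eqref{eq:dec21.2025.1}--\eqref{eq:dec21.2025.3}, $|\cH(\mu_\lambda,sE)-\cH(\mu_\lambda,0)|\le s(\mu_\lambda^i+\mu_\lambda^j)\sup_{0\le r\le s}a(\mu_\lambda^j/\mu_\lambda^i,rE)\to 0$ as $\lambda\uparrow 1$.
\item A closed convex function equals the limit of its values along a segment issuing from an interior point of its domain. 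Hence $\cL(\mu,w)=\lim_{\lambda\uparrow1}\cL(\mu_\lambda,w)\ge s|w^{ij}|-C_\cL$ for every $s>0$.
\end{itemize}

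Step 2: boundary times are null.
\begin{itemize}
\item Let $(\rho,m)$ have finite cost and set $w=m+\nabla_\G\rho$. For a.e.\ $s$ with $\rho^i(s)=0$, Step 1 forces $w^{ji}(s)=0$ for all $j\in N(i)$.
\item The continuity equation then reads $\dot\rho^i(s)=\sum_{j\in N(i)}\omega_{ij}\rho^j(s)$; the heat part refills empty vertices.
\item Since $\rho$ is absolutely continuous, $\dot\rho^i=0$ a.e.\ on $\{\rho^i=0\}$. So every neighbour of $i$ also vanishes at a.e.\ such time.
\item Propagate along the connected graph and use $\sum_k\rho^k\equiv 1$. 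This shows $\{s:\rho^i(s)=0\}$ is null for each $i$.
\end{itemize}
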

\proof{} Let $\mu \in \cP(\bG)$ and  $t\in [0,T).$ Using \eqref{eq:may24.2025.3}, we obtain the first identity in (i). We set 
\[
A^{ij}= \left\{
     \begin{array}{lr}
       \quad  \omega_{ij}& \text{if} \quad \qquad j\in N(i)  \\
       \quad 0 & \quad \text{if} \;  j\not \in N(i), i \not =j\\
       -\sum_{k \in N(i)} \omega_{ik}  & \text{if} \; \qquad \qquad i=j.
     \end{array}
   \right.
\]
Setting 
\[
\rho(s)=\mu e^{(s-t)A}, \qquad m:=- \nabla_\bG \rho \qquad \forall s \in [t, T], 
\]
we find that $\rho(t)=\mu$ and 
\[\dot \rho-\nabla_\bG \cdot \big( \nabla_\bG \rho \big)=0, \quad \text{and} \quad m+ \nabla_\bG \rho=0.\]
By the second identity in \eqref{eq:assump2L}, $\cL(\rho,  m+ \nabla_\bG \rho)=0$ and thus  
\begin{equation}\label{eq:aug1.2025.4new}
\cU(t, \mu) \leq \cA_t^T(\rho,  m)+\cU_T(\rho_T)= -\int_t^T \cF(\rho)ds+\cU_T(\rho_T) \leq \iota^T,
\end{equation}
which proves the second identity in (i).

The convexity of $\cL$ and $\cU_T$, together with the strict convexity of $-\cF$, ensures that $(\bar \rho, \bar m) \to \cA_t^T(\bar \rho, \bar m)+\cU_T(\bar \rho(T))$ is strictly convex over $\cC_t^T(\mu, \cdot)$, and thus there is at most one minimizer. Combined with the existence of a minimizer (established below), this implies (iii).

Let $(\rho_k, m_k)_k \subset \cC_t^T(\mu, \cdot)$ be a minimizing sequence and assume without loss of generality that $\cA_t^T(\rho_k, m_k)+\cU_T(\rho_k(T)) \leq \cU(t, \mu)+1.$ From the boundedness of $\cF$ and $\cU_T^-$, we deduce an upper bound on $\int_t^T \cL(\rho_k, m_k+\nabla_\bG\rho_k)\,ds$. By the coercivity assumption \eqref{eq:may24.2025.3} and Remark~\ref{rem:dec22.2025.1}, $(m_k)_k$ is bounded in $L^{p_0}\big((t, T);\bS(n)\big)$ and $(\dot\rho_k)_k$ is bounded in $L^{p_0}\big((t, T);\ell_2\big)$. Consequently, the $1/p_0'$-H\"older constant of each $\rho_k$ is bounded by a constant independent of $k$. Passing to a subsequence if necessary, we apply the Arzel\`a--Ascoli theorem to obtain that  $(\rho_k)_k$ is uniformly convergent to some $\rho.$ Furthermore, since $p_0>1$, we may assume that $(m_k)_k$ converges weakly to some $m$ in $L^{p_0}\big((t, T);\bS(n)\big)$. We have that $(\rho, m) \in \cC_t^T(\mu, \cdot)$. We use the joint convexity of $\cL$ and the fact that $\cL$ is bounded from below  to conclude that
\[
\liminf_{k \to+\infty} \int_t^T\cL(\rho_k, m_k+  \nabla_\bG \rho_k)ds \geq  \int_{t}^T \Big(\cL(\rho, m+  \nabla_\bG \rho)\Big)ds.
\]
Since  $\cF$ and $\cU_T$ are continuous, we conclude that $(\rho, m)$ is a minimizer, which proves (ii). As written above, this also proves (iii). By Remark~\ref{rem:dec22.2025.1}, we find an upper bound for $\|\dot \rho\|_{L^{p_0}(t, T)}$ which depends only on  $\max |\cU|$, $\max |\cF|$, $\max |\cU_T|$, $p_0$, $C_{\cL}$ and $T$. This, together with (i), implies (iv).
\endproof
\begin{proposition}[Characterization of minimizing paths in $\cC_t^T(\mu, \cdot)$]\label{prop:sufficient-min} Given $\epsilon>0$ and $\mu \in \cP_\epsilon(\G)$, there exists a unique $(\rho, m)$ minimizing $(\bar \rho, \bar m) \to \cA_t^T(\bar \rho, \bar m)+\cU_T(\bar \rho(T))$ over $\cC_t^T(\mu, \cdot)$. Furthermore:
\begin{enumerate}
\item[(i)] $(\rho, m)$ is uniquely characterized as follows: $\rho\in C^\infty([t,T];(0,1)^n)$, and there exists $\phi\in C^\infty([t,T];\R^n)$ such that $m:= D_p \cH(\rho, -\nabla_{\G}\phi)-\nabla_\bG \rho$ and the pair $(\phi,\rho)$ solves the forward--backward system~\eqref{eq:mfg-withcH}.
\item[(ii)] There exists $c_\epsilon>0$ independent of $\mu$ and $t$, but depending on $\epsilon$ and $T$, such that the range of $\rho$ is contained in $\cP_{c_\epsilon}(\G).$
\item[(iii)] There exists a constant $C$ independent of $\mu$, $\epsilon$, and $t$, but depending on the data, such that $\|\phi\|_{\ell_\infty} \leq C/\epsilon$ on $[t, T].$
\end{enumerate}
\end{proposition}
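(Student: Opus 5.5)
Existence and uniqueness of the minimizer $(\rho,m)$ are already given by Proposition~\ref{prop:action-minimizer}(ii), so the plan is to identify it with the MFG solution. The identification goes through a sufficiency (verification) argument, not through first-order necessary conditions at the boundary.

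First, with $H,B,g$ defined by \eqref{eq:define-all-functions}, the discussion after Remark~\ref{rem:dec20.2025.1} shows that all hypotheses of Theorem~\ref{thm:main-mfg} hold, including \eqref{as:uniqueness 1}--\eqref{as:uniqueness 3}. Proposition~\ref{prop:mfg-classical} then gives a unique classical solution $(\phi,\rho)$ of \eqref{eq:mfg}. Since $H(\mu,\nabla_\G\phi)=D_\mu\cH(\rho,-\nabla_\G\phi)+D_\mu\cF(\rho)$ and $B=-D_p\cH(\rho,-\nabla_\G\phi)$, this is exactly \eqref{eq:mfg-withcH}, with $\rho\in C^1([t,T];(0,1)^n)$. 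Smoothness follows by bootstrapping: $\cH,\cF,\cU_T$ are $C^\infty$ on the open cube and $\rho$ stays inside it, so differentiating the ODE repeatedly gives $C^\infty$.

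Next, set $m:=D_p\cH(\rho,-\nabla_\G\phi)-\nabla_\G\rho$, so that $w:=m+\nabla_\G\rho=D_p\cH(\rho,-\nabla_\G\phi)$. By Legendre duality, $D_m\cL(\rho,w)=-\nabla_\G\phi$ and, by \eqref{eq:aug23.2025.1}, $D_\mu\cL(\rho,w)=-D_\mu\cH(\rho,-\nabla_\G\phi)$. The continuity equation $\dot\rho+\nabla_\G\cdot m=0$ holds because $\nabla_\G\cdot\nabla_\G\rho$ equals $\Delta_\G\rho$ with the sign conventions of the paper.

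For minimality, take any competitor $(\bar\rho,\bar m)\in\cC_t^T(\mu,\cdot)$ with $\bar w:=\bar m+\nabla_\G\bar\rho$. Joint convexity of $\cL$ at the interior point $(\rho,w)$ gives
\[
\cL(\bar\rho,\bar w)\ge \cL(\rho,w)+(D_\mu\cL(\rho,w),\bar\rho-\rho)+(D_m\cL(\rho,w),\bar w-w).
\]
This holds even when $\bar\rho$ touches the boundary, by lower semicontinuity and extended-value convexity. Concavity of $\cF$ and convexity of $\cU_T|_{\cP(\bG)}$ give the analogous inequalities. Then one integrates and uses integration by parts \eqref{eq:integration by parts formula} together with the $\phi$-equation and the continuity equations, tested against $\phi$. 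This is the usual Lasry--Lions/duality computation, and the boundary terms combine with $\phi(T)=D_\mu\cU_T(\rho(T))$. The $\bR^n$ versus $\bR^n_0$ issue is harmless because $\bar\rho-\rho\in\bR^n_0$. The $\Delta_\G\phi$ and $\nabla_\G\bar\rho$ terms should cancel via $(\nabla_\G\phi,\nabla_\G(\bar\rho-\rho))=-(\Delta_\G\phi,\bar\rho-\rho)$.

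The conclusion is $\cA_t^T(\bar\rho,\bar m)+\cU_T(\bar\rho(T))\ge\cA_t^T(\rho,m)+\cU_T(\rho(T))$, so $(\rho,m)$ is the minimizer. Uniqueness of the minimizer (Proposition~\ref{prop:action-minimizer}) together with uniqueness of the MFG solution gives the characterization in (i). The main obstacle is making this verification rigorous for competitors with $\bar m$ only in $L^{p_0}$ and $\bar\rho$ only H\"older. I would first treat $\cA<\infty$ (otherwise there is nothing to prove) and pair the distributional continuity equation with the $C^1$ test function $\phi$.

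Finally, (ii) is Lemma~\ref{lem:rho lower bound mfg lambda} with $\lambda=1$, whose constant depends on $\epsilon,T$ and the data but not on $t$ or $\mu$. Item (iii) is Proposition~\ref{prop:phi bd} with $\lambda=1$, giving $\|\phi\|_{\ell_\infty}\le (4T+3)C_1/\epsilon$. Here $C_1$ comes from \eqref{eq:assump2Llower} and the bounds on $\cF,\cU_T$, so it is independent of $\epsilon$.
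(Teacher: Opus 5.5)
Your proposal is correct and follows essentially the same route as the paper. You obtain $(\phi,\rho)$ from Proposition~\ref{prop:mfg-classical} and verify minimality through the subgradient inequalities for $\cL$, $-\cF$ and $\cU_T$, combined with the $\phi$-equation, graph integration by parts and the continuity equation, so that only the terminal term $-(\phi(T),\bar\rho(T)-\rho(T))$ survives; you then get uniqueness from Proposition~\ref{prop:action-minimizer} and read off (ii)--(iii) from Lemma~\ref{lem:rho lower bound mfg lambda} and Proposition~\ref{prop:phi bd}, exactly as the paper does, while your extra justifications (absolute continuity of $\bar\rho$ when pairing with $\phi$, and the $C^\infty$ bootstrap) supply details the paper leaves implicit.
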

\proof{} By Proposition~\ref{prop:mfg-classical}, the system \eqref{eq:mfg-withcH} admits a unique solution $(\phi, \rho)$. Set $m:= D_p \cH(\rho, -\nabla_{\G}\phi)-\nabla_\bG \rho$. By standard convex analysis (see \eqref{eq:aug23.2025.1}), 
\[
\cL(\rho, m+\nabla_\bG \rho)+\cH(\rho, -\nabla_{\G}\phi)=(m+\nabla_{\G}\rho, -\nabla_{\G}\phi),
\]
and 
\begin{equation}  \label{eq:dec17.2025.1}
 D_m \cL(\rho, m+\nabla_\bG \rho)=-\nabla_{\G}\phi \quad \text{and} \quad D_\mu  \cL(\rho, m+\nabla_\bG \rho)= - D_\mu \cH(\rho, -\nabla_{\G}\phi)
\end{equation}
Let $(\bar \rho, \bar m) \in \cC_t^T(\mu, \cdot).$ Using the fact that $\cL$ and $-\cF$ are convex, we infer 
 \begin{align*}
 \cA_t^T(\bar \rho,  \bar m)  \geq   \cA_t^T( \rho, m)
 +&\int_t^T \big( D_m \cL(\rho, m+\nabla_\bG \rho), \bar m+\nabla_\bG \bar \rho-m-\nabla_\bG \rho\big)ds\\
  +& \int_t^T \Big( \big( D_\mu  \cL(\rho, m+\nabla_\bG \rho), \bar \rho -\rho\big) - \big(\delta_\mu \cF(\rho), \bar \rho-\rho\big)\Big)ds.
 \end{align*}
Since $F-\delta_\mu \cF$ lies in the span of ${\bf 1}$ and $\bar \rho - \rho \in \bR^n_0$, we may replace $\delta_\mu \cF$ by $F$. Combining this with the second identity in \eqref{eq:mfg-withcH} and \eqref{eq:dec17.2025.1}, we conclude that  
 \begin{align*}
 \cA_t^T(\bar \rho,  \bar m)  \geq   \cA_t^T(\rho,  m)
 -&\int_t^T \big(\nabla_{\G}\phi, \bar m-m+\nabla_\bG \bar \rho-\nabla_\bG \rho\big) ds\\
  -& \int_t^T\Big(\big(\dot \phi +\Delta_\G \phi-F(\rho) , \bar \rho -\rho\big)+ \big(F(\rho), \bar \rho-\rho\big) \Big) ds\\
 = \cA_t^T(\rho, m) -&  \int_t^T\Big( \big(\nabla_{\G}\phi, \bar m-m\big) +
 \big(\dot \phi, \bar \rho -\rho\big)\Big) ds
 \end{align*}
Using the fact that $(\bar \rho, \bar m)$ and $(\rho, m)$ belong to $\cC_t^T(\mu, \cdot)$, we obtain 
\[
\cA_t^T(\bar \rho,  \bar m)  \geq  \cA_t^T(\rho, m) - \big(\phi(T), \bar \rho(T)-\rho(T)\big) = \cA_t^T(\rho, m)- \big(U_T(\rho(T)), \bar \rho(T)-\rho(T) \big).
\]
%
Since $\cU_T$ is convex and 
\[
\big(\delta_\mu \cU_T(\rho(T)), \bar \rho(T)-\rho(T) \big)=  \big(U_T(\rho(T)), \bar \rho(T)-\rho(T) \big),
\]
we conclude that 
\[
 \cA_t^T(\bar \rho,  \bar m)+\cU_T(\bar \rho(T))  \geq \cA_t^T( \rho,  m)+\cU_T( \rho(T)).
\]
This proves that $(\rho, m)$ minimizes $(\bar \rho, \bar m) \mapsto \cA_t^T(\bar \rho, \bar m)+\cU_T(\bar \rho(T))$ over $\cC_t^T(\mu, \cdot)$. By Proposition \ref{prop:action-minimizer} (ii), $(\rho, m)$ is uniquely determined, proving (i). For (ii) and (iii), Lemma~\ref{lem:rho lower bound mfg lambda} and Proposition~\ref{prop:phi bd} applied to~\eqref{eq:mfg-withcH} on $[t,T]$ yield constants that depend monotonically on $T$, hence uniform in $t\in[0,T)$. \endproof

%
%

%
%
\subsection{Properties of the subdifferential of \texorpdfstring{$\cU(t, \cdot)$}{U(t,.)}}

We denote by $\partial_\cdot \cU(t, \cdot)(\mu)$ the subdifferential of $\cU(t, \cdot)$ with respect to the $\ell_2$--metric. Since $\cU(t, \cdot)$ is convex, this is the set of $q \in \bR^n_0$ such that  
\[
\cU(t, \nu)-\cU(t, \mu) \geq (\nu-\mu, q), \qquad \forall \nu \in \cP(\bG).
\]
When $\partial_\cdot \cU(t, \cdot)(\mu)$ contains a unique element, we denote it by $\delta_{\mu} \cU (t, \mu).$ 
\begin{lemma}\label{lem:bound-on-m-rho-dotb} There exists a constant $\gamma$ (which depends only on the data, including $p_0$ and $C_{\cL}$) such that the following holds:  if $\mu \in \cP(\bG)$,  $t\in [0,T)$, $(\rho, m) \in \cC_t^T(\mu, \cdot)$, and $\cA_t^T(\rho, m)+ \cU_T(\rho(T)) \leq \cU(t, \mu)+1$, then the following hold:
\[
\int_t^T \|m\|^{p_0}_{\ell_2} ds \leq \gamma\big(1 +|\cU(t, \mu)|\big), \qquad \int_t^T|\dot \rho|^{p_0} ds \leq C_{\rm div}^{p_0} \gamma\big(1 +|\cU(t, \mu)|\big),
\]
where $C_{\rm div}$ depends only on $(\G,\omega)$.
Therefore, the $1/p_0'$-H\"older constant of $\rho$ is bounded by a constant $R_{\rm data}$ independent of $t$ and $\mu$ that depends only on the data.
\end{lemma}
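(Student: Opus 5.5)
The plan is to follow the argument of Remark~\ref{rem:dec22.2025.1}, using the near-optimality hypothesis to bound the Lagrangian part of the action.

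First, set $w:=m+\nabla_\bG\rho$. The assumption $\cA_t^T(\rho,m)+\cU_T(\rho(T))\le \cU(t,\mu)+1$ gives
\[
\int_t^T \cL(\rho,w)\,ds \le \cU(t,\mu)+1+\int_t^T \cF(\rho)\,ds-\cU_T(\rho(T)).
\]
Since $\cF$ and $\cU_T$ are continuous on the compact set $[0,1]^n$, the right-hand side is at most
\[
|\cU(t,\mu)| + 1 + T\max|\cF| + \max|\cU_T|.
\]

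Next, use the coercivity \eqref{eq:may24.2025.3}, which applies because $\rho(s)\in\cP(\bG)$. It gives $\int_t^T\|w\|_{\ell_2}^{p_0}\,ds \le C_\cL^{-1}\int_t^T\cL(\rho,w)\,ds + T$. To pass from $w$ to $m$, write $m=w-\nabla_\bG\rho$ and use $\|m\|^{p_0}\le 2^{p_0-1}(\|w\|^{p_0}+\|\nabla_\bG\rho\|^{p_0})$. Here $\|\nabla_\bG\rho(s)\|_{\ell_2}$ is bounded by a constant depending only on $(\bG,\omega)$, since $\rho(s)\in[0,1]^n$. This yields $\int_t^T\|m\|^{p_0}_{\ell_2}\,ds\le \gamma(1+|\cU(t,\mu)|)$, with $\gamma$ depending on $p_0$, $C_\cL$, $T$, $\max|\cF|$, $\max|\cU_T|$ and the graph data.

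For $\dot\rho$, the distributional identity $\dot\rho=-\nabla_\bG\cdot m$ gives $|\dot\rho(s)|\le C_{\rm div}\|m(s)\|_{\ell_2}$ for a.e.\ $s$. Here $C_{\rm div}$ is the operator norm of $\nabla_\bG\cdot$, which depends only on $(\bG,\omega)$. Raising to the power $p_0$ and integrating gives the second bound.

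For the H\"older statement, apply H\"older's inequality to $\rho(s_2)-\rho(s_1)=\int_{s_1}^{s_2}\dot\rho$. This gives a $1/p_0'$-H\"older constant at most $\|\dot\rho\|_{L^{p_0}}$. To make it independent of $t$ and $\mu$, invoke Proposition~\ref{prop:action-minimizer}(i): $|\cU(t,\mu)|\le\max(|\iota_0|,|\iota^T|)$, so
\[
R_{\rm data}:=C_{\rm div}\,\big(\gamma(1+\max(|\iota_0|,|\iota^T|))\big)^{1/p_0}.
\]

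The only mildly delicate point is the constant bookkeeping, ensuring no dependence on $t$ or $\mu$. This is handled by the uniform bounds $\iota_0,\iota^T$ and by $T-t\le T$.
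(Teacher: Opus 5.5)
Your proposal is correct and follows the paper's argument. You bound $\int_t^T \cL(\rho, m+\nabla_\bG\rho)\,ds$ via near-optimality and the boundedness of $\cF,\cU_T$, apply the coercivity \eqref{eq:may24.2025.3} as in Remark~\ref{rem:dec22.2025.1}, control $\dot\rho$ through $\|\dot\rho\|_{\ell_2}\le C_{\rm div}\|m\|_{\ell_2}$, and make the H\"older constant uniform via Proposition~\ref{prop:action-minimizer}(i). Your explicit bookkeeping fills in details the paper leaves implicit: the $2^{p_0-1}$ splitting of $m=w-\nabla_\bG\rho$, and the use of both $\iota_0$ and $\iota^T$ to control $|\cU(t,\mu)|$.
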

\proof{} From the hypothesis $\cA_t^T(\rho, m)+ \cU_T(\rho(T)) \leq \cU(t, \mu)+1$ and the boundedness of $\cF$ and $\cU_T^-$, we deduce an upper bound on $\int_t^T \cL(\rho, m+\nabla_\bG\rho)\,ds$ in terms of $1+|\cU(t,\mu)|$. By the coercivity assumption \eqref{eq:may24.2025.3} and the argument in Remark~\ref{rem:dec22.2025.1}, we obtain the first inequality. For the second, we use that by \eqref{eq:may-continuity.ter}, $\|\dot \rho\|_{\ell_2} \leq C_{\rm div} \|m\|_{\ell_2}$ where $C_{\rm div}$ depends only on $(\G,\omega)$. Since Proposition \ref{prop:action-minimizer} provides an upper bound on $\cU$, the $1/p_0'$-H\"older constant $R_{\rm data}$ is bounded independently of $t$ and $\mu$. \endproof

\begin{proposition}\label{prop:action-frechet} Let $\epsilon>0$ and $\mu \in \cP_\epsilon(\G)$. Then
\begin{enumerate}
\item[(i)]  $\partial_\cdot   \cU(t, \cdot)(\mu)\not=\emptyset$ and $\cU(t, \cdot)$ is Fr\'echet differentiable almost everywhere on $\cP_0(\G)$.
\item[(ii)] If $q \in \partial_\cdot \cU(t, \cdot)(\mu)$ and $\mu \in \cP_\epsilon(\G)$, then $ \epsilon\|q\|_{\ell_2} \leq 2\big(\iota^T-\iota_0\big)$ for all $t\in [0,T].$
\end{enumerate}
\end{proposition}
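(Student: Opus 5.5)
The argument rests on convexity of $\cU(t,\cdot)$, which is Proposition~\ref{prop:action-minimizer}(iii), together with the two-sided bound $\iota_0\le \cU(t,\cdot)\le \iota^T$ from Proposition~\ref{prop:action-minimizer}(i). We regard $\cU(t,\cdot)$ as a convex function on the $(n-1)$-dimensional affine hyperplane containing $\cP(\bG)$. It is finite everywhere on the compact convex set $\cP(\bG)$, and $\cP_0(\G)$ is its relative interior.

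For (i), take $\mu\in\cP_\epsilon(\G)$. This is a relative interior point, so standard finite-dimensional convex analysis (a supporting hyperplane to the epigraph at an interior point of the domain) gives $\partial_\cdot\cU(t,\cdot)(\mu)\neq\emptyset$; its elements can be taken in $\bR^n_0$. Convexity also makes $\cU(t,\cdot)$ locally Lipschitz on the relative interior. Rademacher's theorem, or Alexandrov/Mignot's result that convex functions are differentiable a.e., then shows that $\cU(t,\cdot)$ is Fr\'echet differentiable almost everywhere on $\cP_0(\G)$.

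For (ii), let $q\in\partial_\cdot\cU(t,\cdot)(\mu)$ with $q\in\bR^n_0$ and $q\neq0$. We test the subgradient inequality at the point $\nu:=\mu+s\,q/\|q\|_{\ell_2}$ for a suitable step size $s>0$. We need $\nu\in\cP(\bG)$. The entries of $\nu$ sum to $1$ because $q\in\bR^n_0$, and each coordinate satisfies $\nu^i\ge \mu^i - s|q^i|/\|q\|_{\ell_2}\ge \epsilon-s$. So the choice $s=\epsilon$ is admissible. The subgradient inequality then gives
\[
\epsilon\|q\|_{\ell_2}=(\nu-\mu,q)\le \cU(t,\nu)-\cU(t,\mu)\le \iota^T-\iota_0,
\]
which is even stronger than the claimed bound with the factor $2$. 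The factor $2$ leaves room to use $\nu$ with $s=\epsilon/2$ if strict positivity of $\nu$ is wanted, for instance to stay in the region where $\mu\in\cP_\epsilon$ is defined by strict inequalities. For $t=T$ we have $\cU(T,\cdot)=\cU_T$, which also lies between $\iota_0$ and $\iota^T$ by their definitions in \eqref{eq:june15.2025.1}, so the same argument applies.

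The main subtlety is the ambient space. Subgradients are only determined modulo multiples of ${\bf 1}$, so we must restrict to $q\in\bR^n_0$, as the definition in the text does, so that the direction $q$ is tangent to the simplex. We must also check that convexity and the bounds of Proposition~\ref{prop:action-minimizer} hold on all of $\cP(\bG)$, including the boundary, since $\nu$ may lie there. Proposition~\ref{prop:action-minimizer} is stated for $\mu\in\cP(\bG)$, so this holds.
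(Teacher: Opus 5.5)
Your proof is correct and follows the paper's argument. Convexity and finiteness from Proposition~\ref{prop:action-minimizer} give local Lipschitz continuity, and Rademacher's theorem then gives (i); testing the subgradient inequality along $q/\|q\|_{\ell_2}$ against $\iota_0\le\cU\le\iota^T$ gives (ii). The only difference is that the paper takes step $\epsilon/2$, which produces the factor $2$, whereas your step $\epsilon$ is also admissible because $\mu^i>\epsilon$, and it gives the sharper bound $\epsilon\|q\|_{\ell_2}\le\iota^T-\iota_0$.
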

\proof{} (i) By Proposition \ref{prop:action-minimizer}, $\cU(t, \cdot)$ is convex and thus, since it assumes only finite values, locally Lipschitz continuous. Since $\cP_0(\G)$ is an open subset of the affine hyperplane $\{\nu\in\bR^n:\sum_i\nu_i=1\}$, part (i) follows from Rademacher's theorem. (ii) Suppose that $q \in \partial_\cdot \cU(t, \cdot)(\mu)$ and assume without loss of generality that $q \not =0.$ Then 
\[
\cU\bigg(t, \mu+{\epsilon \over 2 \|q\|_{\ell_2}}q \bigg)-\cU(t, \mu) \geq \bigg({\epsilon \over 2 \|q\|_{\ell_2}}q, q\bigg)={\epsilon \over 2}\|q\|_{\ell_2}.
\]
We use Proposition \ref{prop:action-minimizer} (i) to conclude the proof of (ii).  \endproof

\begin{proposition}\label{thm:main2} Let $\epsilon>0$, $\mu \in \cP_\epsilon(\G)$, and let $(\rho, m)$ be the unique minimizer from Proposition \ref{prop:sufficient-min}. Let $\phi$ be as in that proposition, so that $m+\nabla_\bG \rho= D_p \cH(\rho, -\nabla_{\bG}\phi)$, and set $p:=-\nabla_\bG \phi$. Then:
\begin{enumerate}
\item[(i)] For any $\tau \in [t, T)$, $\cU(\tau, \cdot)$ is differentiable at $\rho({\tau})$.
\item[(ii)] $p$ satisfies the initial value problem
 \begin{equation}\label{eq:june18.2025.3}
\dot p= -\nabla_{\cW} \cH(\rho, p)-\nabla_{\cW} \cF(\rho)- \nabla_\bG (\nabla_\bG \cdot p) \quad \text{on} \; (t, T), \qquad \nabla_{\cW}\cU(t, \mu)=-p(t).
 \end{equation}
 Hence, $\nabla_{\cW}\cU(\cdot, \rho(\cdot))=\nabla_{\bG}\phi$ holds on $[t,T]$, and thus these functions are continuous.
\end{enumerate}
\end{proposition}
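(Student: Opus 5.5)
We prove (i) by showing that the subdifferential $\partial_\cdot\cU(\tau,\cdot)(\rho(\tau))$ is the single point $\Pi_{\bR^n_0}\phi(\tau)$. Part (ii) then follows by applying $\nabla_\bG$ and using the MFG system. By the dynamic programming principle and Lemma~\ref{lem:jan24.2026.6}, the restriction of $(\rho,m)$ to $[\tau,T]$ is the unique minimizer for $\cU(\tau,\rho(\tau))$. The associated costate is $\phi|_{[\tau,T]}$, and $\rho(\tau)\in\cP_{c_\epsilon}(\G)$ by Proposition~\ref{prop:sufficient-min}(ii). It therefore suffices to treat $\tau=t$, with $\mu\in\cP_\epsilon(\G)$ arbitrary.

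\emph{Upper bound (superdifferential).} Fix $\nu\in\bR^n_0$ small. We build a competitor for $\cU(t,\mu+\nu)$ by perturbing the minimizer: take $\bar\rho(s)=\rho(s)+\beta(s)\nu$ with $\beta$ smooth, $\beta(t)=1$, $\beta\equiv 0$ on $[t+\delta,T]$, and $\bar m=m+\beta(s)\,\xi$. Here $\xi\in\bS(n)$ is a fixed flux with $-\nabla_\bG\cdot\xi=\nu$ scaled so that $\dot{\bar\rho}+\nabla_\bG\cdot\bar m=0$. Since $\bG$ is connected, the divergence is onto $\bR^n_0$, so such a $\xi$ exists. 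Concretely we use $\bar m=m+\beta\xi$ together with $\dot\beta$ absorbed, i.e. we solve $-\nabla_\bG\cdot\zeta(s)=\dot\beta(s)\nu$ for $\zeta$. Because $\rho$ stays in $\cP_{c_\epsilon}$ and $\cL$ is smooth there, a first-order expansion gives
\[
\cU(t,\mu+\nu)\le\cU(t,\mu)+\int_t^T\Big[(D_m\cL,\zeta+\beta\nabla_\bG\nu)+(D_\mu\cL-D_\mu\cF,\beta\nu)\Big]ds+o(|\nu|).
\]
We then substitute $D_m\cL=-\nabla_\G\phi$ and $D_\mu\cL=-D_\mu\cH$ from \eqref{eq:dec17.2025.1}, together with the backward equation in \eqref{eq:mfg-withcH}, and integrate by parts in $s$ and on the graph exactly as in the proof of Proposition~\ref{prop:sufficient-min}. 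The integrand collapses to a total derivative $-\frac{d}{ds}(\beta\nu,\phi)$, so the first-order term equals $(\nu,\phi(t))$. Hence $\cU(t,\mu+\nu)\le\cU(t,\mu)+(\nu,\phi(t))+o(|\nu|)$.

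\emph{Lower bound and uniqueness.} The subdifferential of the convex function $\cU(t,\cdot)$ at $\mu$ is nonempty by Proposition~\ref{prop:action-frechet}. Any $q$ in it satisfies $(\nu,q)\le\cU(t,\mu+\nu)-\cU(t,\mu)\le(\nu,\phi(t))+o(|\nu|)$ for all small $\nu\in\bR^n_0$. Letting $\nu$ range over $\pm$ directions forces $q=\Pi_{\bR^n_0}\phi(t)$. So the subdifferential is a singleton, and $\cU(t,\cdot)$ is differentiable at $\mu$ with $\delta_\mu\cU(t,\mu)=\Pi_{\bR^n_0}\phi(t)$. Consequently $\nabla_\cW\cU(\tau,\rho(\tau))=\nabla_\bG\phi(\tau)=-p(\tau)$, since $\nabla_\bG$ kills constants.

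\emph{ODE for $p$.} Apply $\nabla_\bG$ to the second equation of \eqref{eq:mfg-withcH}:
\[
\dot p=-\nabla_\bG\dot\phi=-\nabla_\bG D_\mu\cH(\rho,p)+\nabla_\bG\Delta_\G\phi-\nabla_\bG D_\mu\cF(\rho).
\]
Here $\nabla_\bG D_\mu=\nabla_\cW$, because constants are annihilated, and $\nabla_\bG\Delta_\G\phi=\nabla_\bG(\nabla_\bG\cdot\nabla_\bG\phi)=-\nabla_\bG(\nabla_\bG\cdot p)$. This gives \eqref{eq:june18.2025.3}, and continuity follows from $\phi\in C^\infty$.

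The main obstacle is the upper-bound step. The competitor must remain admissible: $\bar\rho$ must stay in $\cP(\G)$, which holds for small $\nu$ by interiority. The remainder must also be $o(|\nu|)$ uniformly, which uses smoothness of $\cL$ on a compact set around the trajectory. Finally, the bookkeeping must show that the boundary term at $s=t$ comes out exactly as $(\nu,\phi(t))$.
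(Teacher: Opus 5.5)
Your argument is correct and is essentially the paper's: both perturb the minimizer on $[\tau,T]$ by $(\rho+f,\,m+\dot A)$ with $f=-\nabla_\G\cdot A$ and $A(T)=0$, and both identify every subgradient with $\Pi_{\bR^n_0}\phi(\tau)$ through the boundary term. In your case $A=\beta\xi$, so the flux correction must be $\dot\beta\,\xi$ (your $\zeta$), not $\beta\xi$ as in your first, discarded description. The differences are minor: you compute the first variation explicitly as $(\nu,\phi(\tau))$ via \eqref{eq:dec17.2025.1} and the backward equation and then conclude by convexity, whereas the paper differentiates the subgradient inequality in $r$; and you read the ODE for $p$ directly off \eqref{eq:mfg-withcH} instead of re-deriving it from the Euler--Lagrange identity.
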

 \proof{}
Set $\tilde \cL(\mu,w):=\cL(\mu,w)-\cF(\mu)$, so that
\[
\cA_{\tau}^{T}(\rho,m)=\int_\tau^{T}\tilde \cL\big(\rho,\,m+\nabla_\bG\rho\big)\,ds .
\]

Fix $\tau\in[t,T)$. By Proposition~\ref{prop:action-frechet}(i), there exists $q\in \partial_\cdot \cU(\tau,\cdot)(\rho(\tau))$. Take $A\in C^\infty([\tau,T];\bS(n))$ with $A(T)=0$, and set
\[
f:=-\nabla_\bG\cdot A,\qquad \rho^r:=\rho+r f,\qquad m^r:=m+r\dot A \quad \text{on }[\tau,T].
\]
For $|r|$ small, $(\rho^r,m^r)\in\cC_\tau^{T}(\rho^r(\tau),\cdot)$ with $\rho^r(T)=\rho(T)$. By optimality of $(\rho,m)$ and the subgradient inequality for $q$,
\[
(q,\rho^r(\tau)-\rho(\tau))
\le \cA_\tau^{T}(\rho^r,m^r)-\cA_\tau^{T}(\rho,m).
\]
Differentiating in $r$ at $r=0$ and using $f=-\nabla_\bG\cdot A$, $(u,-\nabla_\bG\cdot A)=(\nabla_\bG u,A)$, we obtain
\begin{equation}\label{eq:EL-main2}
0=\int_\tau^{T}\Big(
(\nabla_\cW \tilde \cL(\rho,w),A)
+(D_m\tilde \cL(\rho,w),\dot A-\nabla_\bG(\nabla_\bG\cdot A))
\Big)\,ds
-(\nabla_\bG q,A(\tau)).
\end{equation}
By the Legendre relations \eqref{eq:dec17.2025.1}, integrating by parts in time (using $A(T)=0$), and applying the graph integration by parts formula, we obtain
\[
0=\int_\tau^{T}
\big(
-\nabla_\cW \cH(\rho,p)-\nabla_\cW \cF(\rho)
-\dot p-\nabla_\bG(\nabla_\bG\cdot p)
,\;A
\big)\,ds
-\big(p(\tau)+\nabla_\bG q,\;A(\tau)\big).
\]
Choosing $A$ supported in $(\tau,T)$ gives
\[
\dot p
= -\nabla_\cW \cH(\rho,p)-\nabla_\cW \cF(\rho)-\nabla_\bG(\nabla_\bG\cdot p)
\qquad\text{on }(\tau,T),
\]
and allowing arbitrary $A(\tau)$ gives the boundary identity
\begin{equation}\label{eq:main2-boundary}
p(\tau)=-\nabla_\bG q.
\end{equation}
Since $\bG$ is connected, \eqref{eq:main2-boundary} determines $q\in \mathbb{R}^n_0$ uniquely as a smooth function of $\tau$. Hence, since the subdifferential is a singleton, $\cU(\tau,\cdot)$ is differentiable at $\rho(\tau)$, and $\nabla_\cW \cU(\tau,\rho(\tau))=-p(\tau)$. Since $\tau\in[t,T)$ was arbitrary, the differential equation holds on $(t,T)$ with initial condition at $\tau=t$, proving (ii). The continuity of $\nabla_\cW\cU(\cdot,\rho(\cdot))=\nabla_\bG\phi$ follows from $\phi\in C^\infty([t,T])$. 
\endproof
\subsection{\texorpdfstring{$C^{1,1}$-properties of $\cU(t, \cdot)$ on compact subsets of $\cP_0(\G)$}{C\^{}\{1,1\}-properties of U(t,.) on compact subsets of P\_0(G)}}\label{subsec:LocalSemi}

In this subsection, we fix $\epsilon>0$, $\mu \in \cP_\epsilon(\G)$, $t\in [0,T)$, and let $(\rho, m)$ be the minimizer from Proposition \ref{prop:action-minimizer}.
\begin{remark}\label{rem:bound-on-m} By Proposition \ref{prop:action-frechet} and \eqref{eq:june18.2025.3} in Proposition \ref{thm:main2}, there exists a constant $C(T)$ (depending on $T$ and the data but independent of $t$, $\epsilon$, and $\mu$) such that
\[
\epsilon \|m+ \nabla_\bG \mu\|_{\ell_\infty}=\epsilon\|D_p\cH\big(\mu, -\nabla_{\cW}\cU(t, \mu)\big)\|_{\ell_\infty}\leq C(T).
\]
\end{remark}
\begin{proposition}[Spatial semiconcavity]\label{thm:semi-concave} There exists a constant $C_\epsilon$ (depending on $\epsilon$ and the data, but independent of $\mu$ and $t$) such that
\[
\cU(t, \mu+h)+ \cU(t, \mu-h)-2 \cU(t, \mu) \leq C_\epsilon \|h\|^2_{\ell_2},
\]
whenever $h \in \bR^n_0$ is such that $\|h\|_{\ell_2}\ll1$.
\end{proposition}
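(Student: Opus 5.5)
The plan is the standard semiconcavity argument for value functions: perturb the optimal path of $\cU(t,\mu)$ into competitors for $\cU(t,\mu\pm h)$ and use a second-order Taylor expansion of the integrand along the minimizer. Let $(\rho,m)$ be the minimizer of Proposition~\ref{prop:sufficient-min}, with $w:=m+\nabla_\bG\rho=D_p\cH(\rho,-\nabla_\bG\phi)$. By part~(ii) of that proposition, $\rho(s)\in\cP_{c_\epsilon}(\G)$ for all $s\in[t,T]$. By part~(iii) and Remark~\ref{rem:bound-on-m}, together with smoothness of $D_p\cH$ on compact subsets of $(0,1)^n\times\bS(n)$, the curve $w$ is bounded in $\ell_\infty$ by a constant $C_\epsilon$ independent of $\mu$ and $t$. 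Hence $(\rho(s),w(s))$ stays in a compact set $K_\epsilon\subset(0,1)^n\times\bS(n)$ that depends only on $\epsilon$ and the data.

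For the construction, fix a cutoff $\chi\in C^\infty([t,T])$ with $\chi(t)=1$ and $\chi(T)=0$. To avoid degeneracy as $T-t\to0$, take $\chi(s)=1-(s-t)/(T-t)$. Define $\rho^\pm:=\rho\pm\chi h$, so that $\rho^\pm(t)=\mu\pm h$ and $\rho^\pm(T)=\rho(T)$. Because $h\in\bR^n_0$ and $\G$ is connected, we can choose $A_h\in\bS(n)$ solving $-\nabla_\bG\cdot A_h=h$, namely $A_h=\nabla_\bG\xi$ with $\Delta_\G\xi=h$ up to sign, and with $\|A_h\|\le C\|h\|$. Then set $m^\pm:=m\pm\dot\chi A_h$, which keeps the continuity equation $\dot\rho^\pm+\nabla_\bG\cdot m^\pm=0$. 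For $\|h\|$ small, $\rho^\pm\in\cP_{c_\epsilon/2}(\G)$, so $(\rho^\pm,m^\pm)\in\cC_t^T(\mu\pm h,\cdot)$. Since the terminal points coincide, we get
\[
\cU(t,\mu+h)+\cU(t,\mu-h)-2\cU(t,\mu)\le\int_t^T\Big(\tilde\cL(\rho^+,w^+)+\tilde\cL(\rho^-,w^-)-2\tilde\cL(\rho,w)\Big)ds,
\]
where $\tilde\cL=\cL-\cF$ and $w^\pm=w\pm(\chi\nabla_\bG h+\dot\chi A_h)$.

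The integrand is a symmetric second difference of $\tilde\cL$ in the direction $\pm(\chi h,\chi\nabla_\bG h+\dot\chi A_h)$, and the segments between $(\rho^\pm,w^\pm)$ and $(\rho,w)$ stay in a slightly enlarged compact $K'_\epsilon$. Since $\tilde\cL\in C^\infty$ there, the integrand is bounded by $\sup_{K'_\epsilon}|D^2\tilde\cL|\,(\chi^2\|h\|^2+\|\chi\nabla_\bG h+\dot\chi A_h\|^2)$. Integrating over $[t,T]$ then gives a bound $C(1+(T-t)^{-1})\|h\|^2$.

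The main obstacle is the factor $1/(T-t)$ that comes from $\dot\chi$, which blows up as $t\to T$ and would make the constant depend on $t$. I would first remove it by taking $\chi$ with a transition layer of fixed length $\min(1,T-t)$. When $T-t$ is small, I would instead let the perturbation persist to the endpoint, taking $\chi\equiv1$ so that $\rho^\pm(T)=\rho(T)\pm h$. The terminal contribution is then the second difference of $\cU_T$, which is bounded by $\|D^2\cU_T\|_\infty\|h\|^2$ since $\cU_T\in C^2([0,1]^n)$. In fact the choice $\chi\equiv1$, $m^\pm=m$ works for every $t$: the continuity equation is preserved because $h$ is constant in time, so this option avoids $\dot\chi$ altogether and yields $C_\epsilon(T+1)\|h\|^2$ uniformly in $t$. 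This is the version I would write. The remaining check is that $\rho\pm h$ stays inside $\cP_{c_\epsilon/2}(\G)$ whenever $\|h\|\le c_\epsilon/2$, which gives uniformity of $C_\epsilon$ in $\mu$ and $t$.
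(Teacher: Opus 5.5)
Your final version ($\chi\equiv 1$, $m^\pm=m$, so that $\rho^\pm=\rho\pm h$ and $w^\pm=w\pm\nabla_\bG h$, with the terminal term controlled by $\sup|D^2\cU_T|\,\|h\|^2_{\ell_2}$) is correct and is exactly the paper's argument. It uses the same competitors $(\rho\pm h,m)\in\cC_t^T(\mu\pm h,\cdot)$, the same confinement of $(\rho,m+\nabla_\bG\rho)$ to a compact subset of $\cP_{c_\epsilon/2}(\G)\times\bS(n)$ via Proposition~\ref{prop:sufficient-min}, and the same Hessian bounds on $\cL-\cF$ and $\cU_T$; as you note yourself, the cutoff detour is unnecessary.
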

\proof{}  We assume without loss of generality that $\epsilon \in (0,1).$ By Proposition \ref{prop:sufficient-min} (ii), there exists $c_\epsilon>0$ (depending on $\epsilon$ and $T$, but independent of $\mu$ and $t$) such that $\rho([t,T]) \subset \cP_{c_\epsilon}(\G).$ If
\begin{equation}\label{eq:dec26.2025.2}
\|h\|_{\ell_\infty} \leq  \min\Big\{ {c_\epsilon\over 4}, {C(T)\over c_{\epsilon}\sqrt{\omega_{\max}}}\Big\},
\end{equation}
then $\rho([t, T])\pm h \subset \cP_{c_\epsilon/2}(\bG)$. For each $\tau \in [t, T)$, the restriction $(\rho, m)|_{[\tau, T]}$ is optimal for $\cU(\tau, \rho(\tau))$. Hence, applying Remark \ref{rem:bound-on-m} with $\epsilon = c_\epsilon$ to the minimizer starting from $(\tau, \rho(\tau))$ yields $(m + \nabla_\bG \rho)([t, T]) \subset \bB_{2C(T)/c_\epsilon}$. Since $\|\nabla_\bG h\|_{\ell_\infty} \leq 2\sqrt{\omega_{\max}}\|h\|_{\ell_\infty}$, we have $(m+ \nabla_\bG \rho\pm \nabla_\G h)([t, T]) \subset \bB_{4C(T)/c_\epsilon}.$ Set $\rho^{\pm h}:=\rho\pm h$. Let $\tilde \cL:=\cL-\cF$ and let $\bar c_\epsilon$ bound the largest eigenvalues of $D^2\tilde \cL$ over $\cP_{c_\epsilon/2}(\G) \times \bB_{4C(T)/c_\epsilon}$ and of $D^2\cU_T$ over $\cP_{c_\epsilon/2}(\G).$ Since $(\rho^{\pm h}, m) \in \cC_t^T(\mu\pm h, \cdot)$, we have $\cU(t, \mu\pm h)\leq \cA_t^{T}(\rho^{\pm h}, m) +\cU_T(\rho^{\pm h}(T)).$ Using optimality of $(\rho, m)$ and the bound on $D^2\tilde \cL$ and $D^2\cU_T$,
\begin{multline*}
\cU(t, \mu+ h)+\cU(t, \mu- h) -2 \cU(t, \mu) \leq  \cA_t^{T}(\rho^{+h}, m)+\cA_t^{T}(\rho^{-h}, m) -2\cA_t^{T}(\rho, m)\\
 + \cU_T(\rho^{+h}(T))+\cU_T(\rho^{-h}(T)) -2\cU_T(\rho(T)) \leq  \bar c_{\epsilon}\bigg( \int_t^T \big(\|h\|^2_{\ell_2}+ \|\nabla_\bG h\|^2_{\ell_2}\big)ds +\|h\|^2_{\ell_2}\bigg).
\end{multline*}
\endproof
\begin{corollary}\label{cor:continuous} Given $\epsilon>0$, there exists $C_\epsilon>0$ such that the first and second derivatives of $\cU(t, \cdot)$ are bounded on $\cP_\epsilon(\G)$ by $C_\epsilon$ for all $t \in [0, T].$ 
\end{corollary}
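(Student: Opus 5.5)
We combine convexity of $\cU(t,\cdot)$ (Proposition~\ref{prop:action-minimizer}(iii)) with the semiconcavity bound of Proposition~\ref{thm:semi-concave}, and control first derivatives through Proposition~\ref{prop:action-frechet}(ii). Throughout we work on $\cP_{\epsilon}(\G)$ as a subset of $\cP_{\epsilon/2}(\G)$, so that points of $\cP_\epsilon(\G)$ stay a uniform distance $\epsilon/2$ from $\partial\cP_{\epsilon/2}(\G)$. The constants from the two propositions are then those associated with $\epsilon/2$, and they are uniform in $t\in[0,T)$. The case $t=T$ is immediate, since $\cU(T,\cdot)=\cU_T$ and $\cU_T\in C^2([0,1]^n)$.

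\emph{First derivatives.} Fix $\mu\in\cP_\epsilon(\G)$ and $t\in[0,T)$. Proposition~\ref{prop:sufficient-min} gives a minimizer starting at $\mu$, and Proposition~\ref{thm:main2}(i) with $\tau=t$ shows that $\cU(t,\cdot)$ is differentiable at $\mu=\rho(t)$. Hence $\partial_\cdot\cU(t,\cdot)(\mu)=\{\delta_\mu\cU(t,\mu)\}$, and Proposition~\ref{prop:action-frechet}(ii) gives
\[
\epsilon\,\|\delta_\mu\cU(t,\mu)\|_{\ell_2}\le 2(\iota^T-\iota_0),
\]
so $C_\epsilon:=2(\iota^T-\iota_0)/\epsilon$ works for the gradient. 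A convex function that is differentiable at every point of an open set is automatically $C^1$ there, so $\delta_\mu\cU(t,\cdot)$ is continuous on $\cP_0(\G)$.

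\emph{Second derivatives.} For $\mu\in\cP_\epsilon(\G)$ and $h\in\bR^n_0$ with $\|h\|_{\ell_2}\le r_\epsilon$, where $r_\epsilon$ is the smallness threshold of Proposition~\ref{thm:semi-concave} (an explicit function of $c_{\epsilon}$ and $C(T)$, independent of $\mu,t$), convexity and semiconcavity give
\[
0\le \cU(t,\mu+h)+\cU(t,\mu-h)-2\cU(t,\mu)\le C_\epsilon\|h\|^2_{\ell_2}.
\]
A function that is convex and semiconcave with constant $C_\epsilon$ on a ball has a gradient that is Lipschitz there with constant $C_\epsilon$. One way to see this is that $C_\epsilon|x|^2/2-\cU$ is convex, so the subgradients of $\cU$ are trapped between a supporting plane and a touching paraboloid. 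The standard inequality
\[
\big(\delta_\mu\cU(t,\nu)-\delta_\mu\cU(t,\mu),\nu-\mu\big)\ge C_\epsilon^{-1}\|\delta_\mu\cU(t,\nu)-\delta_\mu\cU(t,\mu)\|^2
\]
then yields the Lipschitz bound. Consequently the second derivatives exist almost everywhere (Alexandrov/Rademacher), as symmetric forms $0\le D^2\cU(t,\cdot)\le C_\epsilon$ on $\bR^n_0$, and they are bounded by $C_\epsilon$ wherever they exist. The later bootstrap upgrades this to classical second derivatives. The same bound holds at any point where the distributional Hessian is defined.

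The main obstacle is uniformity. The semiconcavity constant must not depend on $\mu$, which is why we pass to $\epsilon/2$ and rely on the $\mu,t$-independence of $c_\epsilon$ and $C(T)$ from Proposition~\ref{prop:sufficient-min}(ii) and Remark~\ref{rem:bound-on-m}. We must also check that the local Lipschitz estimate, established on balls of radius $r_\epsilon$, propagates to all of $\cP_\epsilon(\G)$. To do this, we cover segments between points by finitely many such balls, which is possible because $\cP_{\epsilon}(\G)$ is convex.
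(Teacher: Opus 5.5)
Your argument is the paper's: the gradient bound comes from Proposition~\ref{prop:action-frechet}(ii), nonnegativity of the second derivatives comes from the convexity in Proposition~\ref{prop:action-minimizer}(iii), and the upper bound comes from the uniform semiconcavity of Proposition~\ref{thm:semi-concave}; your extra steps (everywhere differentiability via Proposition~\ref{thm:main2}(i), the $C^{1,1}$ upgrade, the covering of segments) spell out what the paper leaves implicit. One small caution: the co-coercivity inequality you quote is a whole-space fact whose usual proof uses points that can leave the ball, but you do not need it, since the two-sided bound $0\le \cU(t,\mu+h)+\cU(t,\mu-h)-2\cU(t,\mu)\le C_\epsilon\|h\|_{\ell_2}^2$ already gives $0\le D^2\cU(t,\mu)\le C_\epsilon$ wherever the second derivative exists, which is the level of precision of the paper's proof.
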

\proof{} Proposition \ref{prop:action-frechet} provides an upper bound, independent of $t$, on $\|\delta_{\mu} \cU(t, \cdot)\|_{\ell_2}$ restricted to $\cP_\epsilon(\G)$. By Proposition \ref{prop:action-minimizer}, $\cU(t, \cdot)$ is convex on $\cP_0(\G)$ for all $t \in [0, T]$, and thus the eigenvalues of the $\ell_2$ second derivatives of $\cU(t, \cdot)$ are nonnegative. Since by Proposition \ref{thm:semi-concave} these eigenvalues are bounded from above on $\cP_\epsilon(\G)$ by a constant independent of $t$, the result follows. \endproof

%
%
%
%
%
\subsection{\texorpdfstring{$C^2$-regularity}{C2-regularity} of the value function in both variables}\label{sec:visc}
\begin{remark} \label{rem:diff_char_val} Fix $t\in[0,T)$ and $\mu \in \cP_0(\G)$, and let $(\rho,m)$ be the optimizer for $\cU(t,\mu)$. Since the restriction of $(\rho,m)$ to $[\tau,T]$ is optimal for $\cU(\tau,\rho(\tau))$, we have
\[
\cU(\tau,\rho(\tau)) = \int_\tau^T \big[\cL(\rho, m + \nabla_\bG\rho) - \cF(\rho)\big]\,ds + \cU_T(\rho(T)).
\]
By Proposition \ref{prop:sufficient-min}, $(\rho,m)$ is smooth, so differentiating gives
\[
-\left.\frac{d}{d\tau}\right|_{\tau = t^+}\cU(\tau,\rho(\tau)) = \cL\big(\rho(\tau),\,m(\tau) + \nabla_\bG\rho(\tau)\big) - \cF(\rho(\tau))\qquad \text{on} \;\; [t, T).
\]
\end{remark}
\begin{lemma}\label{lem:cty_int} For every $\epsilon>0$, the value function $\cU$ is Lipschitz on $[0,T]\times \cP_\epsilon(\G)$ when we endow $\cP(\G)$ with the $\ell_2$-norm.
\end{lemma}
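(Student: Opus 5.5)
Our plan is to separate the spatial and the temporal Lipschitz bounds and then combine them with the triangle inequality. \emph{Spatial part.} Corollary~\ref{cor:continuous}, or directly Proposition~\ref{prop:action-frechet}(ii), gives for each $t\in[0,T]$ that every subgradient of the convex function $\cU(t,\cdot)$ at a point of $\cP_\epsilon(\G)$ has $\ell_2$-norm at most $2(\iota^T-\iota_0)/\epsilon$. The set $\cP_\epsilon(\G)$ is convex, so along the segment joining $\mu,\nu\in\cP_\epsilon(\G)$ we obtain
\[
|\cU(t,\mu)-\cU(t,\nu)|\le \tfrac{2(\iota^T-\iota_0)}{\epsilon}\|\mu-\nu\|_{\ell_2},
\]
uniformly in $t$. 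At $t=T$ we use instead $\cU(T,\cdot)=\cU_T$, which is $C^2$ on $[0,1]^n$.

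\emph{Temporal part.} Fix $\mu\in\cP_\epsilon(\G)$ and $t<s\le T$, and estimate $|\cU(t,\mu)-\cU(s,\mu)|$ from both sides.

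For the bound from above on $\cU(t,\mu)$, we use the dynamic programming principle implicit in Remark~\ref{rem:diff_char_val}. Concatenate the heat path $\rho(\tau)=\mu e^{(\tau-t)A}$ on $[t,s]$ (with $m=-\nabla_\bG\rho$, hence zero Lagrangian, as in the proof of Proposition~\ref{prop:action-minimizer}) with the optimizer for $\cU(s,\rho(s))$. This gives
\[
\cU(t,\mu)\le (s-t)\max|\cF|+\cU(s,\rho(s)).
\]
The heat flow preserves $\cP_\epsilon(\G)$, since it averages over neighbors and the minimum coordinate is nondecreasing, and $\|\rho(s)-\mu\|_{\ell_2}\le C(s-t)$. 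The spatial Lipschitz bound then gives $\cU(t,\mu)-\cU(s,\mu)\le C_\epsilon(s-t)$.

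For the bound in the other direction, let $(\rho,m)$ be the optimizer for $\cU(t,\mu)$, which is smooth by Proposition~\ref{prop:sufficient-min}. Then
\[
\cU(t,\mu)=\int_t^s\big(\cL(\rho,m+\nabla_\bG\rho)-\cF(\rho)\big)\,d\tau+\cU(s,\rho(s)).
\]
We need three uniform bounds:
\begin{itemize}
\item By Proposition~\ref{prop:sufficient-min}(ii), $\rho$ stays in $\cP_{c_\epsilon}(\G)$.
\item By Remark~\ref{rem:bound-on-m}, applied with $c_\epsilon$ at each $(\tau,\rho(\tau))$, $\|m+\nabla_\bG\rho\|_{\ell_\infty}\le C(T)/c_\epsilon$.
\item As a consequence, $\cL$ is bounded on the compact set $\overline{\cP_{c_\epsilon/2}(\G)}\times\bB_{C(T)/c_\epsilon}$ by smoothness, and $\|\dot\rho\|\le C_\epsilon'$.
\end{itemize}
These give $\cU(t,\mu)\ge \cU(s,\rho(s))-C(s-t)$ and $\|\rho(s)-\mu\|_{\ell_2}\le C(s-t)$. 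Applying the spatial Lipschitz bound on $\cP_{c_\epsilon}(\G)$, with constant $2(\iota^T-\iota_0)/c_\epsilon$, yields $\cU(s,\mu)-\cU(t,\mu)\le C_\epsilon(s-t)$.

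The main obstacle is the uniformity in this second direction. It requires the velocity bound of Remark~\ref{rem:bound-on-m} to hold along the entire optimal trajectory with constants independent of $t$ and $\mu$. That is why we pass through the degraded interiority level $c_\epsilon$ from Proposition~\ref{prop:sufficient-min}(ii), and we must check that all constants depend only on $\epsilon$, $T$ and the data. The case $s=T$ is covered by the same argument with $\cU(T,\cdot)=\cU_T$.
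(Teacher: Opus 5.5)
Your proof is correct, and its optimal-trajectory half is essentially the paper's argument. Interiority from Proposition~\ref{prop:sufficient-min}(ii) plus a uniform momentum bound make $\cL-\cF$ bounded along the minimizer, so $\cU(\cdot,\rho(\cdot))$ is Lipschitz in time; the uniform spatial Lipschitz constant on $\cP_{c_\epsilon}(\G)$ and the triangle inequality then finish. The heat-flow competitor you use for the other inequality is valid but unnecessary, since that bound on $\cL-\cF$ is two-sided and the paper gets $|\cU(t,\mu)-\cU(s,\mu)|\le C_\epsilon(s-t)$ at once from the optimizer alone.
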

\begin{proof} Let $\epsilon>0$, $\mu_0, \mu_1 \in \cP_\epsilon(\G)$, $t_0, t_1 \in [0, T]$, and assume $t_0<t_1$. Let $(\rho, m)$ be the minimizer of $\cA_{t_0}^{T}(\bar \rho, \bar m) +\cU_{T}(\bar \rho({T}))$ over $\cC_{t_0}^{T}(\mu_0, \cdot)$. By Proposition \ref{prop:sufficient-min}, the range of $\rho$ is contained in $\cP_{c_\epsilon}(\G)$ for some $c_\epsilon>0$ depending only on $\epsilon$ and $T$, and $\|\phi\|_{\ell_\infty} \leq C/\epsilon$ for a constant $C$ independent of $\mu_0$ and $t_0$. By the first equation in \eqref{eq:mfg-withcH}, these uniform estimates imply $\|\dot \rho\|_{\ell_2} \leq M_\epsilon$ for some $M_\epsilon>0$ depending only on $\epsilon$.
By Corollary \ref{cor:continuous}, there exists $C_\epsilon>0$ such that $\cU(t, \cdot)$ is $C_\epsilon$--Lipschitz on $\cP_{c_\epsilon}(\G)$ for all $t \in [0, T]$. Increasing $C_\epsilon$ if necessary, Remark \ref{rem:diff_char_val} implies that $\cU(\cdot, \rho(\cdot))$ is $C_\epsilon$--Lipschitz on $[t_0, T]$. Thus
\begin{align*}
|\cU(t_0,\mu_0) - \cU(t_1,\mu_0)| &= |\cU(t_0,\rho(t_0)) - \cU(t_1,\rho(t_1)) + \cU(t_1,\rho(t_1)) - \cU(t_1,\rho(t_0))|\\
 &\leq  C_\epsilon(t_1-t_0)+C_\epsilon\|\rho(t_1)-\rho(t_0)\|_{\ell_2} \leq C_\epsilon(1+M_\epsilon)(t_1-t_0),
\end{align*}
and the result follows from the triangle inequality, noting that $|\cU(t_1,\mu_0)-\cU(t_1,\mu_1)|\le C_{\epsilon}\|\mu_0-\mu_1\|_{\ell_2}.$ 
\end{proof}

\begin{proposition}[Time differentiability of $\cU$] \label{prop:1differential} The time derivative of $\cU$ exists on $(0,T)\times \cP_0(\G)$ and the equation
\[
-\partial_t \cU + \cH\big(\cdot, -\nabla_{\cW}\cU\big)- \Delta_\text{ind}\cU + \cF=0
\]
holds pointwise on $(0,T)\times \cP_0(\G)$. Furthermore, $\partial_t \cU$, $\nabla_{\cW}\cU$, and $\Delta_\text{ind}\cU$ are bounded on $(0, T) \times \cP_\epsilon(\G)$ by a constant that depends only on $\epsilon$ and the data $\cU_T$, $\cL$.
\end{proposition}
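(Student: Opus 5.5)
Fix $(t,\mu)\in(0,T)\times\cP_0(\G)$ with $\mu\in\cP_\epsilon(\G)$, and let $(\rho,m)$ be the minimizer of Proposition~\ref{prop:sufficient-min}, with associated $\phi$ and $p=-\nabla_\G\phi$. The plan is to compute $\partial_t\cU(t,\mu)$ by splitting a time increment into a move along the optimal trajectory plus a spatial correction. The first piece is handled by Remark~\ref{rem:diff_char_val}, the second by the spatial differentiability and $C^{1,1}$ bounds of Corollary~\ref{cor:continuous} together with Proposition~\ref{thm:main2}.

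\emph{Right derivative.} For small $h>0$ write
\[
\cU(t+h,\mu)-\cU(t,\mu)=\big[\cU(t+h,\rho(t+h))-\cU(t,\mu)\big]+\big[\cU(t+h,\mu)-\cU(t+h,\rho(t+h))\big].
\]
By Remark~\ref{rem:diff_char_val}, the first bracket divided by $h$ tends to $-\cL(\mu,w)+\cF(\mu)$, where $w:=m(t)+\nabla_\G\mu=D_p\cH(\mu,p(t))$.

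For the second bracket, $\cU(t+h,\cdot)$ is differentiable on $\cP_{c_\epsilon/2}(\G)$ with second derivatives bounded by $C_\epsilon$ uniformly in time (Corollary~\ref{cor:continuous}). A Taylor expansion therefore gives
\[
\cU(t+h,\mu)-\cU(t+h,\rho(t+h))=-\big(\delta_\mu\cU(t+h,\rho(t+h)),\rho(t+h)-\mu\big)+O(h^2).
\]
Proposition~\ref{thm:main2} applied at $\tau=t+h$ identifies $\nabla_\cW\cU(t+h,\rho(t+h))=\nabla_\G\phi(t+h)$, which is continuous in $h$. Since $\rho(t+h)-\mu=-h\nabla_\G\cdot m(t)+o(h)$, integration by parts shows that the second bracket divided by $h$ tends to
\[
-(\nabla_\G\phi(t),m(t))=(p(t),\,w-\nabla_\G\mu).
\]

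Combining the two pieces with Legendre duality, $\cL(\mu,w)=(w,p)-\cH(\mu,p)$, we obtain
\[
\partial_t^+\cU=\cH(\mu,p)-(p,\nabla_\G\mu)-\cF(\mu).
\]
Now $p=-\nabla_\cW\cU(t,\mu)$ and $-(p,\nabla_\G\mu)=(\nabla_\cW\cU,\nabla_\G\mu)=-\Delta_\text{ind}\cU$ by~\eqref{eq:sep06.2025.2}. This is exactly the stated equation for the right derivative.

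\emph{Left derivative and the main obstacle.} The difficulty is that the optimal trajectory issuing from $(t,\mu)$ gives no information about times before $t$. To handle this, for $h>0$ I take $\mu_h:=\rho^{t-h,\mu}$, the point obtained by running the ODE system~\eqref{eq:mfg-withcH} backwards in time. Alternatively, I use the observation that the right derivative $\partial_t^+\cU(s,\nu)=\cH(\nu,-\nabla_\cW\cU)-\Delta_\text{ind}\cU-\cF$ is a continuous function of $(s,\nu)$. Continuity here follows because $\nabla_\cW\cU(s,\nu)=\nabla_\G\phi^{s,\nu}(s)$, and this depends continuously on $(s,\nu)$ by Proposition~\ref{prop:C1-dependence}, applied with the data~\eqref{eq:define-all-functions}. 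Since $\cU(\cdot,\mu)$ is Lipschitz (Lemma~\ref{lem:cty_int}), a continuous function with a continuous right derivative is $C^1$ by the standard Dini-derivative lemma. Hence $\partial_t\cU$ exists and the equation holds pointwise; I would use this second route.

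The bounds follow from the estimates already in hand. On $\cP_\epsilon(\G)$, $\nabla_\cW\cU$ is bounded by Proposition~\ref{prop:action-frechet}(ii). Consequently $\Delta_\text{ind}\cU=-(\nabla_\cW\cU,\nabla_\G\mu)$ is bounded, $\cH$ is continuous on the corresponding compact set, and the equation then bounds $\partial_t\cU$.
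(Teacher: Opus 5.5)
Your proof is correct apart from a sign slip. Combine your first-bracket limit $-\cL(\mu,w)+\cF(\mu)$ with $(p,w-\nabla_\G\mu)$ and $\cL(\mu,w)=(w,p)-\cH(\mu,p)$. This gives $\partial_t^+\cU=\cH(\mu,p)-(p,\nabla_\G\mu)+\cF(\mu)$, not $-\cF(\mu)$. Only the $+\cF$ version is the stated equation, and the same slip reappears in your Dini paragraph.

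Your right-derivative computation is the paper's in different packaging. The paper writes $\cU(t_0+h,\rho(t_0+h))-\cU(t_0+h,\mu_0)$ as $\int_{t_0}^{t_0+h}(\delta_\mu\cU(t_0+h,\rho(\tau)),\dot\rho(\tau))\,d\tau$. It then splits off an $o(h)$ error using the Lipschitz bound on $\delta_\mu\cU(t_0+h,\cdot)$, which is your Taylor expansion in integrated form.

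The genuine difference is the left derivative. The paper handles it directly:
\begin{itemize}
\item it takes the minimizers $(\rho^h,m^h)$ for $\cU(t_0-h,\mu_0)$;
\item it uses the uniform bounds of Proposition~\ref{prop:sufficient-min}, equicontinuity, and uniqueness for \eqref{eq:mfg-withcH} to get $(\rho^h,m^h,p^h)\to(\rho,m,p)$ in $C^1([t_0,T])$ and $p^h(t_0-h)\to p(t_0)$;
\item it then repeats the two-bracket decomposition.
\end{itemize}

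You instead note that the right derivative exists at every $(s,\mu)$ and is a continuous function of $\nabla_\cW\cU(s,\mu)=\nabla_\G u(s,\mu)$. That quantity is continuous by Propositions~\ref{thm:main2} and~\ref{prop:C1-dependence}. This is legitimate: the hypotheses of Section~\ref{sec:classical} were verified for the data \eqref{eq:define-all-functions}, and nothing there depends on the present proposition. You then conclude with the elementary fact that a continuous function with a continuous right derivative is $C^1$.

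What each route buys:
\begin{itemize}
\item Your route is shorter, sidesteps the stability argument for the shifted minimizers, and gives continuity of $\partial_t\cU$ for free, anticipating Proposition~\ref{thm:C2-regularity}.
\item The paper's route stays inside the variational framework of this section, needing only a priori bounds and uniqueness rather than the implicit-function-theorem machinery behind Proposition~\ref{prop:C1-dependence}.
\end{itemize}
In fact you only need continuity of $u$, which a stability argument like the paper's would also supply.

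Your explicit treatment of the bounds via Proposition~\ref{prop:action-frechet}(ii) is fine, and it fills in what the paper leaves implicit.
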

\proof{} Fix $t_0 \in (0,T)$, $\epsilon>0$, and $\mu_0 \in \cP_\epsilon(\G)$. Given $h \in (0, T-t_0)$, we denote by $(\rho, m)$ the minimizer of $(\bar \rho, \bar m) \mapsto \cA_{t_0}^{T}(\bar \rho, \bar m) +\cU_{T}(\bar \rho({T}))$ over $\cC_{t_0}^{T}(\mu_0, \cdot)$. Note that $(\rho, m)|_{[t_0, t_0+h]}$ is the minimizer over the interval $[t_0, t_0+h]$. We have 
\[
\cU(t_0+h, \rho(t_0+h)) -\cU(t_0+h,\rho(t_0))=\int_{t_0}^{t_0+h} \Big(\delta_{\mu} \cU\big(t_0+h, \rho(\tau)\big), \dot \rho(\tau)\Big) d\tau:=U_1(h)+U_2(h)
\]
where, noting that by Corollary~\ref{cor:continuous} and Proposition~\ref{prop:sufficient-min}, $\delta_{\mu} \cU(t_0+h,\cdot)$ is Lipschitz and $\dot \rho$ is bounded,
\begin{align*}
U_1(h)&:= \int_{t_0}^{t_0+h} \Big(\delta_{\mu} \cU\big(t_0+h, \rho(\tau)\big)-\delta_{\mu}  \cU\big(t_0+h, \rho(t_0+h)\big), \dot \rho(\tau)\Big) d\tau = o(h),\\
\intertext{and, using the continuity equation and integration by parts,}
U_2(h)&:=\int_{t_0}^{t_0+h} \Big(\delta_{\mu}  \cU\big(t_0+h, \rho(t_0+h)\big), \dot \rho(\tau)\Big) d\tau = \Big(\nabla_\cW \cU\big(t_0+h, \rho(t_0+h)\big), \int_{t_0}^{t_0+h} m(\tau)d\tau\Big).
\end{align*}
By Proposition~\ref{thm:main2}, $\nabla_\cW \cU\big(\cdot, \rho(\cdot)\big)$ is continuous, and since $m$ is continuous,  
\begin{equation}\label{eq:aug29.2025.1}
\lim_{h \to 0^+}{U_2(h)\over h}=\big(\nabla_\cW \cU\big(t_0, \rho(t_0)\big), m(t_0)\big).
\end{equation}
We use Remark~\ref{rem:diff_char_val} to infer
\begin{align}
 \lim_{h \to 0^+}{\cU(t_0+h,\rho(t_0+h)) - \cU(t_0,\rho(t_0)) \over h}
= \cH\big(\rho(t_0), p(t_0)\big)- \big( m(t_0)+ \nabla_\bG \rho(t_0), p(t_0)\big) + \cF(\rho(t_0)).\label{eq:aug29.2025.2}
\end{align}
Using the decomposition
\begin{align*}
\cU(t_0+h,\mu_0)-\cU(t_0,\mu_0)  =& \cU(t_0+h,\mu_0) - \cU(t_0+h,\rho(t_0+h)) +  \cU(t_0+h,\rho(t_0+h))-\cU(t_0,\rho(t_0)) \\
=& -U_1(h)-U_2(h)+  \cU(t_0+h,\rho(t_0+h)) -\cU(t_0,\rho(t_0)),
\end{align*}
and, in light of \eqref{eq:aug29.2025.1} and \eqref{eq:aug29.2025.2}, we deduce
\begin{align}\label{eq:jan18.2026.3}
\lim_{h \to 0^+}{\cU(t_0+h,\mu_0)-\cU(t_0,\mu_0)\over h}=&\big(p(t_0), m(t_0)\big)+\cH\big(\rho(t_0), p(t_0)\big) - \big( m(t_0)+ \nabla_\bG \rho(t_0), p(t_0)\big) + \cF(\rho(t_0))\nonumber\\
= &\cH\big(\mu_0, p(t_0)\big)- \big( \nabla_\bG \mu_0, p(t_0)\big)+ \cF(\mu_0).
\end{align}
It remains to compute $\lim_{h \to 0^+}(\cU(t_0,\mu_0)-\cU(t_0-h,\mu_0))/h$. For $h \in (0, t_0)$, let $(\rho^h, m^h)$ denote the minimizer for $\cU(t_0-h, \mu_0)$ and set $p^h := D_m \cL(\rho^h, m^h + \nabla_\bG \rho^h)$. By Proposition~\ref{prop:sufficient-min}, $(\rho^h, m^h, p^h)$ and their derivatives are uniformly bounded and equicontinuous. Combined with the characterization of minimizers as the unique solution to \eqref{eq:mfg-withcH}, this implies $(\rho^h, m^h, p^h)|_{[t_0, T]} \to (\rho, m, p)$ in $C^1([t_0, T])$ as $h \to 0^+$; in particular, $p^h(t_0) \to p(t_0)$. The uniform bound on $\|\dot p^h\|_{\ell_\infty}$ hence implies $p^h(t_0-h) \to p(t_0)$.
Repeating the same decomposition as for the right derivative, we write
\[
\cU(t_0, \mu_0) - \cU(t_0-h, \mu_0) = -\bar U_1(h) -\bar U_2(h) + \cU(t_0, \rho^h(t_0)) - \cU(t_0-h, \rho^h(t_0-h)),
\]
where $\bar U_1(h) = o(h)$ by the same bounded-Hessian argument, and
\[
\bar U_2(h) = \Big(\nabla_\cW \cU\big(t_0, \rho^h(t_0)\big), \int_{t_0-h}^{t_0} m^h(\tau)\,d\tau\Big).
\]
By continuity, $\lim_{h \to 0^+} \bar U_2(h)/h = \big(-p(t_0), m(t_0)\big)$. Combined with the analogue of \eqref{eq:aug29.2025.2} for $(\rho^h, m^h, p^h)$ and the convergence $p^h(t_0-h) \to p(t_0)$, we obtain
\[
\lim_{h \to 0^+}{\cU(t_0 ,\mu_0)-\cU(t_0-h,\mu_0)\over h}  =  \cH\big(\mu_0, p(t_0)\big)- \big( \nabla_\bG \mu_0, p(t_0)\big)+ \cF(\mu_0).
\]
Together with \eqref{eq:jan18.2026.3}, this shows that $\cU(\cdot, \mu_0)$ is differentiable at $t_0$. Since $\rho(t_0)=\mu_0$ and, by Proposition~\ref{thm:main2}~(ii), $p(t_0)=-\nabla_{\cW}\cU(t_0, \mu_0)$, we conclude that
\begin{equation}\label{eq:aug30.2025.2}
\partial_t \cU(t_0, \mu_0)=  \cH\big(\mu_0, -\nabla_{\cW}\cU(t_0, \mu_0)\big)- \Delta_\text{ind}\cU(t_0,\mu_0) + \cF(\mu_0).
\end{equation} \endproof

\begin{proposition}\label{thm:main3} For any $\epsilon>0$, the functions $\partial_t \cU$ and $\delta_{\mu} \cU$ are Lipschitz on $(0, T) \times \cP_\epsilon(\G).$ In particular, $\cU$ is twice differentiable almost everywhere on $(0, T) \times \cP_0(\G).$ For each $\epsilon>0$, all partial second derivatives are bounded on $(0, T) \times \cP_\epsilon(\G)$ by a constant depending only on $\epsilon$.
\end{proposition}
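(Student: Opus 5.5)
The plan is to identify $\delta_\mu\cU$ and $\partial_t\cU$ with quantities given by the MFG solution map, whose $C^1$ regularity is already available from Section~\ref{sec:classical}. Set $H,B,g$ as in \eqref{eq:define-all-functions}, and let $u(t,\mu)=\phi^{t,\mu}(t)$ be the $C^1$ solution of the master equation given by Theorem~\ref{thm:main-mfg}. The hypotheses are satisfied: \eqref{as:uniqueness 3} holds by Remark~\ref{rem:dec20.2025.1}, and the other conditions were verified right after \eqref{eq:define-all-functions}. By Proposition~\ref{prop:sufficient-min}(i), the costate $\phi$ of the minimizer for $\cU(t,\mu)$ is exactly $\phi^{t,\mu}$. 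Proposition~\ref{thm:main2}(ii) then gives
\[
\nabla_\cW\cU(t,\mu)=\nabla_\G\phi^{t,\mu}(t)=\nabla_\G u(t,\mu).
\]
Since $\G$ is connected and both $\delta_\mu\cU(t,\mu)$ and $\Pi_{\bR^n_0}u(t,\mu)$ lie in $\bR^n_0$, it follows that $\delta_\mu\cU=\Pi_{\bR^n_0}u$ on $[0,T)\times\cP_0(\G)$. By Proposition~\ref{prop:C1-dependence} this map is $C^1$ on $[0,T]\times\cP_0(\G)$, so it is Lipschitz on $(0,T)\times\cP_\epsilon(\G)$, provided its derivatives are bounded there.

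For $\partial_t\cU$ I will use the HJB identity of Proposition~\ref{prop:1differential}:
\[
\partial_t\cU(t,\mu)=\cH\big(\mu,-\nabla_\G u(t,\mu)\big)+\big(\nabla_\G u(t,\mu),\nabla_\G\mu\big)+\cF(\mu).
\]
The right-hand side is a composition of the $C^\infty$ functions $\cH$ and $\cF$ (on $(0,1)^n$) with the $C^1$ map $(t,\mu)\mapsto(\mu,\nabla_\G u)$. So $\partial_t\cU$ is $C^1$ on $(0,T)\times\cP_0(\G)$. Its Lipschitz bound on $(0,T)\times\cP_\epsilon(\G)$ reduces to bounds on $\nabla_\G u$ and on the first derivatives of $u$ there, together with bounds on $D\cH$ over the corresponding compact set of $(\mu,p)$. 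Corollary~\ref{cor:continuous} and Proposition~\ref{prop:action-frechet}(ii) already give $|\nabla_\G u|\le C_\epsilon$, so $p$ ranges over a compact set.

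The main obstacle is obtaining bounds on $D_\mu u$ and $\partial_t u$ that are uniform on the non-compact set $(0,T)\times\cP_\epsilon(\G)$, since the closure of $\cP_\epsilon(\G)$ in $\cP(\G)$ still lies inside $\cP_0(\G)$ but the implicit function argument is only local.
\begin{itemize}
\item \emph{Spatial derivatives.} For $D_\mu u$ this is immediate: $\delta_\mu u$ in the directions of $\bR^n_0$ projects to $D^2_\mu\cU$, which Corollary~\ref{cor:continuous} bounds by $C_\epsilon$ uniformly in $t$.
\item \emph{Time derivative, first option.} Because $\overline{\cP_\epsilon(\G)}\subset\cP_0(\G)$ and $[0,T]$ are compact, and $\cS$ is $C^1$ on $[0,T]\times\cP_0(\G)$ including $t=T$ by Proposition~\ref{prop:C1-dependence}, the derivative $\partial_t u$ is continuous on the compact set $[0,T]\times\overline{\cP_\epsilon(\G)}$ and hence bounded. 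This compactness argument in fact handles all derivatives at once, and it is the route I would take first.
\item \emph{Time derivative, second option.} If one prefers, $\partial_t u$ can be read off from the master equation, where it is expressed through $\nabla_\cW u$, $\nabla_\G u$, $H$ and $B$, all of which are bounded on compacts.
\end{itemize}

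Finally, once $\partial_t\cU$ and $\delta_\mu\cU$ are Lipschitz, Rademacher's theorem applied to these gradients gives twice differentiability almost everywhere on $(0,T)\times\cP_0(\G)$. The second partial derivatives are then bounded by the Lipschitz constants, which depend only on $\epsilon$ and the data.
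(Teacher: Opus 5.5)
Your argument is correct, but it is not the route the paper takes for this proposition.

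The paper stays inside the HJB framework. It differentiates the identity of Proposition~\ref{prop:1differential} in $\mu$. It bounds $\nabla_\cW\partial_t\cU$ using only the uniform first- and second-derivative bounds on $\cU(t,\cdot)$ over $\cP_\epsilon(\G)$ from Corollary~\ref{cor:continuous}, which come from convexity and the semiconcavity of Proposition~\ref{thm:semi-concave}. It then commutes $\partial_t$ and $\nabla_\cW$ in the sense of distributions, which makes $\delta_\mu\cU$ Lipschitz in $t$. Finally it differentiates the HJB identity once more in $t$ to bound $\partial^2_{tt}\cU$.

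You instead use the characteristics. You identify $\delta_\mu\cU=\Pi_{\bR^n_0}u$ via Proposition~\ref{thm:main2} and the uniqueness in Proposition~\ref{prop:sufficient-min}. You then invoke the $C^1$-dependence of Proposition~\ref{prop:C1-dependence}. You conclude by compactness of $[0,T]\times\overline{\cP_\epsilon(\G)}\subset[0,T]\times\cP_0(\G)$.

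This is essentially the argument the paper gives afterwards in Proposition~\ref{thm:C2-regularity}. In your approach the present statement therefore becomes a corollary of the $C^2$ result. You gain continuity of the second derivatives, hence twice differentiability everywhere, so the appeal to Rademacher is superfluous. The cost is reliance on the implicit-function-theorem machinery and the linearized invertibility from \eqref{as:uniqueness 3}, and a Lipschitz constant obtained by compactness rather than tracked. The paper's route yields only Lipschitz bounds and a.e.\ second derivatives. In exchange, its constants come directly from $C_\epsilon$, and it never needs smooth dependence of the MFG flow on $(t,\mu)$.

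One small imprecision in your ``spatial derivatives'' bullet: the Hessian bound of Corollary~\ref{cor:continuous} controls only the projected part $\Pi_{\bR^n_0}\delta_\mu u$, not all of $D_\mu u$. That projected part is all you need for $\delta_\mu\cU=\Pi_{\bR^n_0}u$, and your compactness argument covers the rest anyway.
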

\proof{} Combining Proposition \ref{thm:semi-concave} and Proposition \ref{prop:1differential}, the temporal and spatial derivatives of $\cU$ exist. Differentiating the identity in Proposition \ref{prop:1differential} directly shows that, pointwise and distributionally, 
\[
\nabla_{\cW} \partial_t \cU = \nabla_{\cW}\cH\big(\cdot, -\nabla_{\cW}\cU\big) - D_p \cH\big(\cdot, -\nabla_{\cW}\cU\big)\nabla^2_{\cW\cW}\cU + \nabla_{\cW} \cF +
\nabla_{\cW}\big(\nabla_\G \mu\big)\nabla_{\cW} \cU + \nabla^2_{\cW\cW} \cU \nabla_\G \mu.
\]
Hence if $\epsilon>0$, then $\|\nabla_{\cW} \partial_t \cU\|_{\ell_\infty}$ is bounded on $(0, T) \times \cP_\epsilon(\G)$ by a constant independent of $t$ that depends only on $\epsilon$. Furthermore, $\partial_t \nabla_{\cW} \cU=\nabla_{\cW} \partial_t \cU$ in the sense of distributions. Differentiating the identity in Proposition \ref{prop:1differential} once more, we conclude that, pointwise and distributionally, 
\[
\partial^2_{tt} \cU = -D_p \cH\big(\cdot, -\nabla_{\cW}\cU\big) \partial_t \nabla_{\cW}\cU - \Delta_\text{ind}\partial_t \cU.
\]
We conclude that $|\partial^2_{tt} \cU|$ is bounded on $(0, T) \times \cP_\epsilon(\G)$ by a constant depending only on $\epsilon$. \endproof

\begin{proposition}[Space--time $C^{2}$ regularity of $\cU$]\label{thm:C2-regularity}
The value function $\cU$ is $C^2$ on $[0,T]\times \cP_0(\G)$.
\end{proposition}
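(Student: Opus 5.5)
The plan is to identify the Wasserstein gradient of $\cU$ with the solution of the master equation built in Section~\ref{sec:classical}, and then read off continuity of all second derivatives from the $C^1$ regularity already proved there.

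\textbf{Step 1 (identification).} Let $u:[0,T]\times\cP_0(\G)\to\R^n$ be the value function of Theorem~\ref{thm:main-mfg}, associated with the data \eqref{eq:define-all-functions}. All hypotheses needed for that theorem have been checked after \eqref{eq:define-all-functions}, including the differential monotonicity \eqref{as:uniqueness 3} via Remark~\ref{rem:dec20.2025.1}. By Proposition~\ref{prop:sufficient-min}, for $(t,\mu)\in[0,T)\times\cP_0(\G)$ the minimizer for $\cU(t,\mu)$ is generated by the unique solution $(\phi,\rho)$ of \eqref{eq:mfg-withcH}, which is exactly $(\phi^{t,\mu},\rho^{t,\mu})$. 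Proposition~\ref{thm:main2}(ii) at $s=t$ then gives
\[
\nabla_\cW\cU(t,\mu)=\nabla_\G\phi^{t,\mu}(t)=\nabla_\G u(t,\mu).
\]
Since $\G$ is connected and $\delta_\mu\cU\in\bR^n_0$, this yields $\delta_\mu\cU(t,\mu)=\Pi_{\bR^n_0}u(t,\mu)$. At $t=T$ the same identity holds with $\Pi_{\bR^n_0}D_\mu\cU_T=\delta_\mu\cU_T$.

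\textbf{Step 2 (spatial regularity).} By Theorem~\ref{thm:main-mfg} (through Proposition~\ref{prop:C1-dependence}), $u\in C^1([0,T]\times\cP_0(\G);\R^n)$. It follows that $\delta_\mu\cU=\Pi_{\bR^n_0}u$ is $C^1$ in $(t,\mu)$. Hence $\delta^2_{\mu\mu}\cU$ and $\partial_t\delta_\mu\cU$ exist and are continuous on $[0,T]\times\cP_0(\G)$, and the same holds for $\nabla_\cW\cU=\nabla_\G u$. This step also gives $C^1$ regularity of $\cU(t,\cdot)$, improving on the a.e.\ statements of Proposition~\ref{thm:main3}.

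\textbf{Step 3 (time derivatives).} Proposition~\ref{prop:1differential} gives, on $(0,T)\times\cP_0(\G)$,
\[
\partial_t\cU=\cH(\mu,-\nabla_\G u)+(\nabla_\G u,\nabla_\G\mu)+\cF(\mu).
\]
The right-hand side is $C^1$ in $(t,\mu)$ because $\cH,\cF$ are smooth on $(0,1)^n$ and $u$ is $C^1$. So $\partial_t\cU$ extends continuously, with continuous $\partial_{tt}\cU$ and $\delta_\mu\partial_t\cU$, up to $t=0$ and $t=T$.

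\textbf{Step 4 (assembling $C^2$).} Since $\cU$ is Lipschitz by Lemma~\ref{lem:cty_int}, a one-sided argument through the mean value theorem shows that $\partial_t\cU$ exists up to the endpoints and equals this extension. Together with Steps 2 and 3, all first and second partial derivatives are then continuous, and the mixed partials coincide by the Schwarz theorem, so $\cU\in C^2$.

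\textbf{Main obstacle.} The delicate point is the identification at the endpoints, especially near $t=T$ where the problem has vanishing horizon. There I would rely on the continuity of $u$ up to $t=T$ from Proposition~\ref{prop:C1-dependence}. I would also use the uniform-in-$t$ bounds of Corollary~\ref{cor:continuous} to pass to the limit in the difference quotients of $\cU$ and $\delta_\mu\cU$.
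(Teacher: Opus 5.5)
Your proposal is correct and follows the paper's proof essentially step for step. You identify $\delta_\mu\cU=\Pi_{\bR^n_0}u$ via Proposition~\ref{thm:main2} and connectedness of the graph, deduce $C^1$ regularity of $\delta_\mu\cU$ from that of $u$, and read off $C^1$ regularity of $\partial_t\cU$ from the identity in Proposition~\ref{prop:1differential}; your extra mean-value argument at $t=0$ and $t=T$ (using Lemma~\ref{lem:cty_int}) is a welcome detail that the paper leaves implicit.
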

\proof{}
By the verification in Section~\ref{subsection:assumptions-g}, the hypotheses of Theorem~\ref{thm:main-mfg} hold, so $(t,\mu) \mapsto u(t,\mu):=\phi^{t,\mu}(t)$ is $C^1$ on $[0,T]\times\cP_0(\G)$. By Proposition~\ref{thm:main2}, $\nabla_\cW \cU(t,\mu)=\nabla_\G u(t,\mu)$. Since $\nabla_\cW \cU=\nabla_\G(\delta_\mu \cU)$ by definition, we have $\nabla_\G(\delta_\mu \cU - u)=0$. Connectedness of $(\bV,\G,\omega)$ implies that $\delta_\mu \cU - u$ is spatially constant, so $\delta_\mu \cU(t,\mu)=\Pi_{\bR^n_0} u(t,\mu)$. Since $u$ is $C^1$ and $\Pi_{\bR^n_0}$ is linear, $\delta_\mu \cU$ is $C^1$. By Proposition~\ref{prop:1differential},
\[
\partial_t \cU(t,\mu) = \cH(\mu,p(t,\mu))-(\nabla_\G \mu,p(t,\mu))+\cF(\mu),
\]
where $p(t,\mu):=-\nabla_\G u(t,\mu)$. Since $u$ is $C^1$, so is $p$. The functions $\cH$ and $\cF$ are smooth on the interior by assumptions \eqref{eq:dec20.2025.2} and \eqref{eq:assumpF}, and $(\mu,p)\mapsto(\nabla_\G\mu,p)$ is bilinear, so $\partial_t\cU$ is $C^1$.
We conclude that $\cU$ is $C^2$ on $[0,T]\times\cP_0(\G)$.
\endproof

\begin{proof}[Proof of Theorem~\ref{thm:main-hjb}]
Part~(i) combines Proposition~\ref{prop:action-minimizer}, which yields existence of a unique minimizer for $\cA_t^T(\cdot,\cdot)+\cU_T(\cdot(T))$ over $\cC_t^T(\mu,\cdot)$, with Proposition~\ref{prop:sufficient-min}, which characterizes that minimizer via $m=D_p\cH(\rho,-\nabla_\G\phi)-\nabla_\G\rho$, where $(\phi,\rho)$ is the unique classical solution of the forward--backward system~\eqref{eq:mfg-withcH}; the gradient identity $\nabla_\cW\cU(\cdot,\rho(\cdot))=\nabla_\G\phi$ is Proposition~\ref{thm:main2}(ii). For part~(ii), the $C^2$ regularity of $\cU$ on $[0,T]\times\cP_0(\G)$ is Proposition~\ref{thm:C2-regularity}, and the fact that $\cU$ satisfies the HJB equation is Proposition~\ref{prop:1differential}, combined with the $C^2$ regularity just established.
\end{proof}

%
%
%
%
\section{Master equation and Nash equilibria via Markov chains}\label{sec:NE-discreteMFG}
In this section, we assume that $(\cL, \cH, \cF, \cU_T)$ are as in Section \ref{sec:ClassicalHJE}. For $\mu \in \cP_0(\G)$ and $w \in \bS(n)$, we write
\[
\bar \cL(\mu, w):=\cL(\mu, m), \qquad m^{ij}:=\theta(\mu^i, \mu^j) w^{ij} \quad \forall (i,j) \in \bE.
\]
We keep $(H, g)$ as in \eqref{eq:define-all-functions}. We assume that $\cL(\cdot, m), \cH(\cdot, p)$ extend to smooth functions on $(0, 1)^n\times \bS(n):$
\[
\cL, \cH \in C^2\big((0,1)^n \times \bS(n)\big).
\]
Similarly, we assume that $\cF$ and $\cU_T$ have a smooth extension on $(0,1)^n.$ We set 
\[
L(\mu,  w):=D_\mu \bar \cL(\mu, w)-D_\mu \cF(\mu).
\]
%
%
%
%
\subsection{A variational formula for \texorpdfstring{$D_{\mu^i}\cH$}{D\_(mu i) H}}
\begin{definition}\label{defn:feb24.2026.1} We say that $(\bar \cL, \cH)$ satisfies the unique momentum  property if the following holds: whenever $\mu_1, \mu_2 \in \cP_0(\G)$, $\bar v, p_1, p_2\in \bS(n)$ and
$$
\bar \cL(\mu_i, \bar v)+\cH(\mu_i, p_i)=( \bar v, p_i)_{\mu_i}, \quad \forall i\in\{1,2\}
$$ 
then $ p_1=p_2.$
\end{definition}
The proof of the following useful lemma is left to the reader.
\begin{lemma}\label{lem:mar03.2026.2} Assume that $\bar v, p \in \bS(n)$ and $\mu \in (0,1)^n$ are such that $\bar \cL(\mu, \bar v)+\cH(\mu, p)=(\bar v, p)_\mu$. Then, 
\[
D_{v^{ij}} \bar \cL(\mu, \bar v)=\theta(\mu^i, \mu^j)p^{ij}, \quad D_{p^{ij}} \cH(\mu, p)=\theta(\mu^i, \mu^j)\bar v^{ij}
\]
and 
\[
D_{\mu^i}\bar \cL(\mu, \bar v)+ D_{\mu^i}\cH(\mu, p)=\sum_{j \in N(i)} \partial_1 \theta(\mu^i, \mu^j) \bar v^{ij} p^{ij}
\]
\end{lemma}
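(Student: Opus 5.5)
The plan is to reduce everything to the classical Fenchel--Young equality for the pair $(\cL(\mu,\cdot),\cH(\mu,\cdot))$ in the $m$-variable. Fix $\mu\in(0,1)^n$ and write $\Theta_\mu(w)^{ij}:=\theta(\mu^i,\mu^j)w^{ij}$, so that $\bar\cL(\mu,w)=\cL(\mu,\Theta_\mu w)$. By \eqref{eq:inner-product-and-norm},
\[
(\bar v,p)_\mu=(\Theta_\mu\bar v,p).
\]
With $\bar m:=\Theta_\mu\bar v$, the hypothesis therefore reads $\cL(\mu,\bar m)+\cH(\mu,p)=(\bar m,p)$. Since $\cH(\mu,\cdot)$ is the Legendre transform of the convex function $\cL(\mu,\cdot)$, the function
\[
G(\nu,m,q):=\cL(\nu,m)+\cH(\nu,q)-(m,q)
\]
is nonnegative everywhere. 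It vanishes at $(\mu,\bar m,p)$, which is equivalent to $p=D_m\cL(\mu,\bar m)$ and $\bar m=D_p\cH(\mu,p)$; here we use that $\cL(\mu,\cdot)$ is $C^1$ and strictly convex by \eqref{eq:assump1LDec2025}.

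First I treat the momentum derivatives. The chain rule gives $D_{v^{ij}}\bar\cL(\mu,\bar v)=\theta(\mu^i,\mu^j)\,D_{m^{ij}}\cL(\mu,\bar m)=\theta(\mu^i,\mu^j)p^{ij}$. The identity $D_{p^{ij}}\cH(\mu,p)=\bar m^{ij}=\theta(\mu^i,\mu^j)\bar v^{ij}$ is just the second Legendre relation. This gives the first display.

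Next, the $\mu$-derivatives. Since $G\ge 0$ and $G(\mu,\bar m,p)=0$, the point $(\mu,\bar m,p)$ is a global minimum of the $C^1$ function $G$ on $(0,1)^n\times\bS(n)\times\bS(n)$. Hence $D_\nu G=0$ there, which gives
\[
D_\mu\cL(\mu,\bar m)=-D_\mu\cH(\mu,p).
\]
This is exactly \eqref{eq:aug23.2025.1}, now on all of $(0,1)^n$ using the extensions assumed in this section.

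Then I differentiate $\bar\cL(\mu,\bar v)=\cL(\mu,\Theta_\mu\bar v)$ in $\mu^i$ with $\bar v$ held fixed:
\[
D_{\mu^i}\bar\cL(\mu,\bar v)=D_{\mu^i}\cL(\mu,\bar m)+\big(D_m\cL(\mu,\bar m),\partial_{\mu^i}\Theta_\mu\bar v\big).
\]
The matrix $\partial_{\mu^i}\Theta_\mu\bar v$ has nonzero entries only at $(i,j)$ and $(j,i)$ with $j\in N(i)$. At $(i,j)$ the entry is $\partial_1\theta(\mu^i,\mu^j)\bar v^{ij}$. At $(j,i)$ it is $\partial_2\theta(\mu^j,\mu^i)\bar v^{ji}$, which equals $\partial_1\theta(\mu^i,\mu^j)\bar v^{ij}\cdot(-1)$ after using the symmetry $\theta(r,s)=\theta(s,r)$ and the skew-symmetry of $\bar v$. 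The corresponding entries of $p$ are $p^{ij}$ and $p^{ji}=-p^{ij}$. So the two products $p^{ij}\cdot(\ldots)$ and $p^{ji}\cdot(\ldots)$ coincide, and the factor $\frac12$ in $(\cdot,\cdot)$ cancels against this doubling. The pairing therefore equals $\sum_{j\in N(i)}\partial_1\theta(\mu^i,\mu^j)\bar v^{ij}p^{ij}$. Adding $D_{\mu^i}\cH(\mu,p)$ and using the Legendre relation above yields the claimed formula.

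The only delicate points are bookkeeping. The first is to check this factor-$\frac12$/edge-doubling computation, including the differentiability of $\theta$ on $(0,\infty)^2$ at the diagonal (it is smooth there). The second is to justify that the global-minimum argument for $G$ is legitimate in the $\nu$ variable. That requires $\cL$ and $\cH$ to be $C^1$ jointly on $(0,1)^n\times\bS(n)$ and $G\ge 0$ for all $\nu\in(0,1)^n$, not just $\nu\in\cP_0(\G)$. This holds because the Legendre duality is taken pointwise in $\nu$ for the extended functions.
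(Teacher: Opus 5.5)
Your proof is correct. The paper leaves this lemma to the reader, and your argument is the one it implicitly relies on: the Fenchel gap is nonnegative and vanishes at the given point, so the identities are first-order conditions. Indeed, the paper invokes the lemma in the proof of Lemma~\ref{lem:feb24.2026.2new} through a critical point of $(\nu,\bar w,\pi)\mapsto\bar\cL(\nu,\bar w)+\cH(\nu,\pi)-(\bar w,\pi)_\nu$. That functional equals $G(\nu,\Theta_\nu\bar w,\pi)\geq 0$, so working with it directly would give all three identities at once, with your chain-rule and edge-doubling bookkeeping absorbed into the $\nu$-derivative of $(\bar w,\pi)_\nu$.
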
 
For $a=(a^1, \cdots, a^{i-1}, a^{i+1}, \cdots, a^n) \in \bR^{n-1}$ we define $v[a] \in \bS(n)$ by setting  $v^{kl}[a]=0$ if $(k, l) \not \in \bE$ and for $(k, l) \in \bE$ we set
\[
v^{kl}[a]:=\left\{
     \begin{array}{lr}
      \hfill v^{kl}& \; \text{if} \quad i \not\in \{k, l\}\\
     \hfill v^{il}+ a^l &  \; \text{if} \hfill k =i\\
      \hfill  v^{ki}-a^k&  \; \text{if} \hfill l=i.
     \end{array}
   \right.
\]
Set 
\[
 \ell(a, v, i)\equiv  \ell(\mu, p, a, v, i):=  D_{\mu^i} \bar \cL(\mu, v[a]+\nabla_\G \log \mu)-\sum_{l \in N(i)}  \big(v[a]+\nabla_\G \log \mu\big)^{il} p^{il} \partial_1 \theta(\mu^i, \mu^l)
\]
and 
\[
K_i(\mu, p, v):=D_{\mu^i} \bar \cL(\mu, v+\nabla_\G \log \mu)-\sum_{l \in N(i)}  \big(v+\nabla_\G \log \mu\big)^{il} p^{il} \partial_1 \theta(\mu^i, \mu^l)
\]
so that 
\[
K_i(\mu, p, v[a])= \ell(\mu, p, a, v, i).
\]
Given $\mu \in \cP_0(\G),$  $v, p \in \bS(n)$ we often use the notation  
\[
\bar v:= v+\nabla_\G \log \mu
\] 
Observe that if $k<l$ and $(k, l)\in \bE$ then
\begin{equation}\label{eq:mar10.2026.1}
D_{v^{kl}}K_i(\mu, p, v)=
\left\{
     \begin{array}{lcr}
      \hfill D_{v^{il}} D_{\mu^i} \bar \cL(\mu, \bar v)-p^{il} \partial_1\theta(\mu^i, \mu^l) & \; \text{if} &k=i\\
     \hfill D_{v^{ki}} D_{\mu^i} \bar \cL(\mu, \bar v)-p^{ki} \partial_1\theta(\mu^k, \mu^i) & \; \text{if} &l=i\\
     \hfill  D_{v^{kl}} D_{\mu^i} \bar \cL(\mu, \bar v) &  \; \text{if}&k, l\not=i.
     \end{array}
   \right.
\end{equation}
\begin{lemma}\label{lem:feb24.2026.2new} 
Suppose that $(\bar \cL, \cH)$ satisfies the unique momentum  property, $i \in \bV$, $\mu \in \cP_0(\G),$ $D_{\mu^i} \bar \cL(\mu, \cdot)$  is convex and $v, p \in \bS(n)$ are such that $\bar \cL(\mu,  v+\nabla_\G \log \mu)+ \cH(\mu, p)=(v+\nabla_\G \log  \mu, p)_{\mu}.$ Then 
$$-\ell(0, v, i)=\max_a \{-\ell(a, v, i)\}= D_{\mu^i}\cH (\mu,  p).$$
If we further assume that  $D_{v^{kl}} D_{\mu^i} \bar \cL(\mu, \bar v)\equiv 0$ whenever $i\not \in \{k, l \}$ then 
\[
D_{\mu^i}\cH(\mu, p)= \sup_{w \in \bS(n)}  \sum_{l \in N(i)}  \big(w+\nabla_\G \log \mu\big)^{il} p^{il} \partial_1 \theta(\mu^i, \mu^l)- D_{\mu^i} 
\bar \cL(\mu, w+\nabla_\G \log \mu)
\]
and the maximum is attained when $w=v$. 
\end{lemma}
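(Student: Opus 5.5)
The plan is to obtain the value of the maximum from Lemma~\ref{lem:mar03.2026.2}, and then to show that $v$ is a critical point of the convex map $w\mapsto K_i(\mu,p,w)$. Write $\bar v:=v+\nabla_\G\log\mu$. The hypothesis is exactly the Fenchel equality $\bar\cL(\mu,\bar v)+\cH(\mu,p)=(\bar v,p)_\mu$. The last identity of Lemma~\ref{lem:mar03.2026.2} therefore gives
\[
D_{\mu^i}\cH(\mu,p)=\sum_{l\in N(i)}\partial_1\theta(\mu^i,\mu^l)\,\bar v^{il}p^{il}-D_{\mu^i}\bar\cL(\mu,\bar v)=-K_i(\mu,p,v)=-\ell(0,v,i).
\]
It then remains to show that $a\mapsto \ell(a,v,i)$ attains its minimum at $a=0$. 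For the second statement we need more: that $w\mapsto K_i(\mu,p,w)$ attains its minimum over all of $\bS(n)$ at $w=v$.

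Convexity is immediate. $K_i(\mu,p,\cdot)$ is the convex function $w\mapsto D_{\mu^i}\bar\cL(\mu,w+\nabla_\G\log\mu)$ minus a function that is linear in $w$. Also $a\mapsto v[a]$ is affine. Hence $\ell(\cdot,v,i)=K_i(\mu,p,v[\cdot])$ is convex, and a vanishing gradient at $a=0$ (respectively at $w=v$) forces a global minimum. By the chain rule, $\partial_{a^l}\ell(0,v,i)=\pm D_{v^{il}}K_i(\mu,p,v)$. By \eqref{eq:mar10.2026.1}, it therefore suffices to prove the following two facts:
\[
D_{v^{il}}D_{\mu^i}\bar\cL(\mu,\bar v)=p^{il}\,\partial_1\theta(\mu^i,\mu^l)\quad(l\in N(i)),
\qquad
D_{v^{kl}}D_{\mu^i}\bar\cL(\mu,\bar v)=0\quad(i\notin\{k,l\}).
\]
(For the edge written as $(k,i)$, use the symmetry $\partial_2\theta(\mu^k,\mu^i)=\partial_1\theta(\mu^i,\mu^k)$ together with skew-symmetry of $p$ and $v$.) The second fact is only needed for the $\sup_w$ statement, and there it is exactly the additional hypothesis.

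To obtain the first fact, I will use the unique momentum property. Fix $\bar v$. For $\nu$ near $\mu$, define $P(\nu)\in\bS(n)$ by
\[
P^{kl}(\nu):=\frac{D_{v^{kl}}\bar\cL(\nu,\bar v)}{\theta(\nu^k,\nu^l)} .
\]
Strict convexity of $\bar\cL(\nu,\cdot)$ and Legendre duality give $\bar\cL(\nu,\bar v)+\cH(\nu,P(\nu))=(\bar v,P(\nu))_\nu$. The unique momentum property then says $P(\nu)$ does not depend on $\nu$, so $P(\nu)=P(\mu)=p$, where the last equality is the first identity of Lemma~\ref{lem:mar03.2026.2}. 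Consequently $D_{v^{kl}}\bar\cL(\nu,\bar v)=p^{kl}\theta(\nu^k,\nu^l)$. Differentiating in $\nu^i$ and exchanging the order of differentiation (legitimate since $\cL\in C^2$) gives
\[
D_{v^{il}}D_{\mu^i}\bar\cL(\mu,\bar v)=p^{il}\,\partial_1\theta(\mu^i,\mu^l).
\]
The same computation yields the vanishing of the mixed derivatives off the star of $i$ for free.

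The main obstacle is that the unique momentum property is stated only for $\mu_1,\mu_2\in\cP_0(\G)$. Thus the identity $D_{v^{kl}}\bar\cL(\nu,\bar v)=p^{kl}\theta(\nu^k,\nu^l)$ is initially known only on the simplex, and differentiating it yields only tangential derivatives $D_{\mu^i}-D_{\mu^j}$. The full partial $D_{\mu^i}$ is defined through the extension to $(0,1)^n$. I would first try to show that the relevant quantities (in particular the Legendre relation and $\theta$) behave compatibly with the extension, so that the identity holds on an open subset of $(0,1)^n$. Failing that, I would interpret the unique momentum property for the extended pair, or note that only the combination appearing in $K_i$ matters, and check that it is unaffected by normal components.

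Once the gradient vanishes, convexity gives $\min_a\ell(a,v,i)=\ell(0,v,i)$, which is the first claim. Under the extra hypothesis, $D_vK_i(\mu,p,v)=0$ in every direction. Hence $v$ globally minimizes $K_i(\mu,p,\cdot)$ on $\bS(n)$, and
\[
D_{\mu^i}\cH(\mu,p)=-K_i(\mu,p,v)=\sup_{w\in\bS(n)}\bigl(-K_i(\mu,p,w)\bigr),
\]
with the supremum attained at $w=v$, which is the second claim.
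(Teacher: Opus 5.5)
Your proof follows the paper's proof. The paper also reduces, via convexity of $\ell(\cdot,v,i)$ and \eqref{eq:mar10.2026.1}, to $D_{\mu^i}D_{v^{ij}}\bar\cL(\mu,\bar v)=\partial_1\theta(\mu^i,\mu^j)p^{ij}$, which it obtains by applying the unique momentum property to $(\mu+te_i,\bar v,p_t)$ and then Lemma~\ref{lem:mar03.2026.2}; the paper leaves the value identity $-\ell(0,v,i)=D_{\mu^i}\cH(\mu,p)$ implicit, and you derive it explicitly. Since $\mu+te_i\notin\cP_0(\G)$, the paper implicitly takes your second fallback, reading the unique momentum property on an open subset of $(0,1)^n$ (it holds there in the paper's Example, and there, as you note, \eqref{eq:march12.2026.3} becomes automatic). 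This reading is genuinely needed, so your other two fallbacks cannot work. For instance, near the simplex, $\cL(\mu,m)=\frac14\sum_{(k,l)\in\bE}(m^{kl})^2/\big(\theta_{kl}(\mu)+\kappa(\sum_j\mu^j-1)\big)$ with $\kappa<0$ coincides with $\cL_\theta$ on $\cP_0(\G)$ and meets the lemma's hypotheses as literally stated, yet it gives $D_{v^{il}}K_i(\mu,p,v)=-\kappa\,\bar v^{il}$, which is nonzero whenever $\bar v^{il}\neq0$.
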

\proof{}  We set $\bar v:=v+\nabla_\G \log \mu$ and note that since $\ell(\cdot, v, i)$ is a convex function, its critical points are minimizers. To conclude the first part of the proof, it suffices to show that $D_{v^{ij}}K_i(\mu, p, v)=0$ for $j \in N(i).$ But if $j \in N(i)$,   then in light of \eqref{eq:mar10.2026.1}
\begin{equation}\label{eq:27Fev2026.1}
D_{v^{ij}}K_i(\mu, p, v)= D_{v^{ij}}D_{\mu^i} \bar \cL(\mu, \bar v)- p^{ij} \partial_1 \theta(\mu^i, \mu^j)=D_{\mu^i} D_{v^{ij}}\bar \cL(\mu, \bar v)-  p^{ij} \partial_1 \theta(\mu^i,  \mu^j).
\end{equation}
Let $p_t \in \bS(n)$ be such that 
$$
\bar \cL(\mu+te_i, \bar v)+\cH(\mu+te_i, p_t)=(p_t, \bar v)_{\mu+te_i}.
$$ 
Since $(\bar \cL, \cH)$ satisfies the unique momentum  property, $p_t=p$. We have that $(\mu+te_i, \bar v, p)$ is a critical point for $(\nu, \bar w, \pi) \to \bar \cL(\nu, \bar w)+\cH(\nu, p)-(\bar w, \pi)_{\nu}$, and so by Lemma \ref{lem:mar03.2026.2} 
\[
D_{v^{ij}} \bar \cL(\mu+te_i, \bar v)=\theta( \mu^i+t, \mu^j) p^{ij}.
\]
Thus, $D_{\mu^i}D_{v^{ij}} \bar \cL(\mu, \bar v)= \partial_1 \theta(\mu^i, \mu^j) p^{ij}$ and thus by \eqref{eq:27Fev2026.1}, $D_{v^{ij}}K_i(\mu, p, v)=0,$ which proves the first claim.

Now, using the fact that $K_i(\mu, p, \cdot)$ is a  convex function and since we know that  $D_{v^{ij}}K_i(\mu, p, v)=0$ for $j \in N(i)$, it remains to show that  $D_{v^{kl}}K_i(\mu, p, v)=0$ when $k\not =i$ and $l\not =i.$ But by assumption $D_{v^{kl}} D_{\mu^i} \bar \cL(\mu, \bar v)\equiv 0$, which together with \eqref{eq:mar10.2026.1} implies that $D_{v^{kl}}K_i(\mu, p, v)=0$. \endproof
\begin{example}Let $l_{ij}, h_{ij} \in C^2(\bR)$ be strictly convex functions such that $l_{ij}$ is the Legendre transform of $h_{ij}$. We assume that $h_{ij} \geq 0$ and there exist $C, C_1>0$ and $C_2 \in \bR$ such that $$C_1|s|^{p_0}+C_2\leq l_{ij}(s) \leq C(|s|^{p_0}+1).$$ We set 
\begin{equation}\label{eq:feb22.2026.5bisb}
\bar \cL(\mu, v)= {1\over 2} \sum_{(i, j) \in \bE} \theta(\mu^i, \mu^j)l_{ij}\big(v^{ij}\big), \qquad \cL(\mu, m)= {1\over 2}\sum_{(i, j) \in \bE}\theta(\mu^i, \mu^j)l_{ij}\bigg({m^{ij}\over \theta(\mu^i, \mu^j)}\bigg), 
\end{equation}
for $v, m \in \bS(n)$ and $\mu \in (0,1)^n$ and for $p \in \bS(n)$, we set 
\[
\cH(\mu, p):=\sup_{m \in \bS(n)} \big(m, p\big)-\cL(\mu, m) \equiv {1\over 2}\sum_{(i, j) \in \bE} \theta(\mu^i, \mu^j)h_{ij}\big(p^{ij}\big).
\]
Then $\cL$ is jointly convex, $D_{\mu^i} \bar \cL(\mu, \cdot)$ is convex and the pair $(\bar\cL, \cH)$ satisfies the unique momentum property. Furthermore, for any $i \in \bV,$ $D_{v^{kl}} D_{\mu^i} \bar \cL\equiv 0$ for all $(k, l)\in \bE$ such that $i\not \in \{k, l \}$.
\end{example}

%
\subsection{Markov chains and Nash equilibria}
In this section, we assume that $(\bar\cL, \cH)$ satisfies the unique momentum property and that $D_{\mu^i} \bar \cL(\mu, \cdot)$ is a convex function for all $i \in \{1, \cdots, n\}.$ When needed, we further assume that
\begin{equation}\label{eq:march12.2026.3}
D_{v^{kl}} D_{\mu^i} \bar \cL\equiv 0 \quad \forall (k, l)\in \bE, \; \forall i \in \{1, \cdots, n\}\; \text{such that}\;  i\not \in \{k, l \}.
\end{equation}
We call $Q \in C^1([0, T], \bR^{n \times n})$ a path of rate matrices if $Q^{ij}_t \geq 0$ for $i \not =j$ and $t \in [0, T]$ and 
\[
Q^{ii}_t=-\sum_{j \not =i}Q^{ij}_t \qquad (\forall i=1, \cdots, n).
\]
In this case, if $I_n \in \bR^{n \times n}$ is the identity matrix then for each $s \in [0,T]$, we denote by $\Psi(s, \cdot):[0,T] \to \bR^{n \times n}$ the unique solution to the system of ODEs
\begin{equation}\label{eq:feb10.2025.0sum}
\left\{
     \begin{array}{lr}
      \partial_t \Psi(s, t) = \Psi(s, t) Q(t)& \; \text{on} \; (0,T)\\
      \Psi(s, s) = I_n.&
     \end{array}
   \right.
\end{equation}
Given $f: \bV \to \bR$ and $0\leq s\leq t \le T$, we define $\hat Q_{s, t} f: \bV \to \bR$ by 
$$
\hat Q_{s, t} (f)= \Psi(s, t) f.
$$ 

We identify each $a \in \bR^n$ with the function on $\bV$ given by $a(e_i)=a^i$; equivalently, if $x \in \bV$ then $a(x)=(a, x)$.

The standard theory of Markov chains provides us with a probability space $(\Omega, \bP^\mu, \cF)$ and a filtration $\big(\cF_t\big)_{0 \leq t\leq T}$ such that $\cF_t$ is a sub-$\sigma$-algebra of $\cF$ and contains the $\bP^\mu$--null sets. Furthermore, there exists a Markov chain $(X_t)_{0 \leq t\leq T}$ which is $\big(\cF_t\big)_{0 \leq t\leq T}$--progressively measurable and such that the following proposition holds. 
\begin{proposition}\label{prop:feb05.2025.6sum} Set $\rho_t:=\mu \Psi(0, t)$. Then the  following hold.
\begin{enumerate}
\item[(i)] For every $\omega \in \Omega,$ $t \to X_t(\omega)$ is c\`adl\`ag (right continuous and has a left limit).
\item[(ii)] We have $\rho^i_t=\bP^\mu\big\{X_t=e_i \big\}$, $\dot \rho_t=\rho_t Q_t$ and $\rho_0=\mu$.
\item[(iii)] If $0\leq s<t$ then  $\bE\big[f(X_t)| \cF_s \big]=\bE\big[f(X_t)| X_s \big]=\big(\hat Q_{s,t} f\big)(X_s).$ 
\item[(iv)] $(M_t)_t:=(X_t-X_0 -\int_0^t X_\tau Q_\tau d\tau)_t$ is a martingale and for all $f \in C^1([0,T];\R^n)$,     
\begin{equation}\label{eq:Feb07.2026.9}
\big(f_t, X_t\big)=\big(f_0, X_0\big)+\int_0^t \big( \dot f_r, X_r\big)dr  + \int_0^t \big( f_r,X_r Q_{r} \big) dr +\int_0^t \big( f_r, dM_r\big)
\end{equation}
\end{enumerate}
\end{proposition}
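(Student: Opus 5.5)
The plan is to build the chain from its transition kernel and then check (i)--(iv) in turn using only the forward equation \eqref{eq:feb10.2025.0sum} and the Markov property. The key object is the propagator $\Psi(s,t)$. First I will show that, because $Q$ is a continuous path of rate matrices, $\Psi(s,t)$ is a stochastic matrix for every $s\le t$:
\begin{enumerate}
\item[(a)] $\Psi(s,t)\mathbf 1=\mathbf 1$, since $\partial_t(\Psi(s,t)\mathbf 1)=\Psi(s,t)Q_t\mathbf 1=0$.
\item[(b)] $\Psi(s,t)$ has nonnegative entries. To see this, write $Q_t=D_t+N_t$ with $D_t$ diagonal and $N_t\ge 0$ off-diagonal, and use a Dyson/variation-of-constants expansion around the diagonal flow $e^{\int D}$. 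Each term of the expansion is nonnegative.
\item[(c)] The Chapman--Kolmogorov identity $\Psi(s,r)\Psi(r,t)=\Psi(s,t)$ holds, by uniqueness for the linear ODE.
\end{enumerate}

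By the Kolmogorov extension theorem, the consistent family of finite-dimensional distributions $\mu\Psi(0,t_1)$, $\Psi(t_1,t_2),\dots$ defines a Markov process on the finite set $\bV$. Its natural filtration, augmented by the null sets, gives $(\cF_t)$.

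For (i), I will use that the jump rates are bounded ($\sup_t|Q_t|<\infty$ by continuity on $[0,T]$). Concretely, I would construct the process pathwise by uniformization: take a Poisson clock of rate $\lambda\ge\sup_{t,i}|Q^{ii}_t|$, and at each ring at time $\tau$ jump from $e_i$ according to the row of the stochastic matrix $I_n+Q_\tau/\lambda$. This has finitely many jumps on $[0,T]$ a.s., so the paths are piecewise constant and c\`adl\`ag; modifying on a null set makes this hold for every $\omega$. I would then check that this construction has the transition kernel $\Psi$, by verifying that the conditional law solves the same forward equation.

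Part (iii) is then the Markov property. Part (ii) follows by taking expectations: $\rho^i_t=\bP^\mu\{X_t=e_i\}=(\mu\Psi(0,t))^i$, and $\dot\rho_t=\mu\Psi(0,t)Q_t=\rho_tQ_t$.

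For (iv), the plan is to show the martingale property of $M_t$ and then derive \eqref{eq:Feb07.2026.9}.
\begin{enumerate}
\item[(a)] By (iii), $\bE[X_t-X_s\mid\cF_s]=X_s(\Psi(s,t)-I_n)=\int_s^t X_s\Psi(s,r)Q_r\,dr$.
\item[(b)] Also by (iii), $\bE[\int_s^tX_rQ_r\,dr\mid\cF_s]=\int_s^tX_s\Psi(s,r)Q_r\,dr$, using Fubini and the boundedness of $X$.
\item[(c)] Subtracting (a) and (b) gives $\bE[M_t-M_s\mid\cF_s]=0$.
\end{enumerate}
To get \eqref{eq:Feb07.2026.9}, apply the integration-by-parts (product) formula to $(f_t,X_t)$. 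Since $f\in C^1$ has finite variation and $X$ is a pure-jump semimartingale of finite variation, we get $d(f,X)=(\dot f,X)\,dr+(f,dX)$ with no bracket term, because $f$ is continuous. Then substitute $dX=X_rQ_r\,dr+dM_r$.

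The main obstacle is the pathwise construction giving c\`adl\`ag paths for every $\omega$ together with the correct time-inhomogeneous kernel. I would handle it by the uniformization construction above and rely on standard references rather than redoing it in full. The remaining steps are routine.
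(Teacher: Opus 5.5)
The paper gives no proof of this proposition: it asserts that ``the standard theory of Markov chains'' supplies $(\Omega,\bP^\mu,\cF)$, the augmented filtration and the chain, and records (i)--(iv) as known facts. Your sketch is a correct way to supply that theory.

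Uniformization with $\lambda\ge\sup_{t,i}|Q^{ii}_t|$ gives piecewise constant c\`adl\`ag paths. The null-set modification is harmless because each $\cF_t$ is augmented. The short-time expansion $P(t,t+h)=I_n+hQ_t+o(h)$, together with the Markov property of the construction, shows that the transition matrix $P(s,\cdot)$ solves \eqref{eq:feb10.2025.0sum}. Hence $P(s,\cdot)=\Psi(s,\cdot)$, which yields (ii) and (iii). Your conditional-expectation computation then gives the martingale property of $M$. Since $X$ has finitely many jumps and $f$ is continuous, \eqref{eq:Feb07.2026.9} is pathwise Lebesgue--Stieltjes integration by parts, with $\int_0^t(f_r,dM_r)$ defined pathwise.

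Two steps can be dropped. Once $X$ is built by uniformization, the Kolmogorov extension step is superfluous, and on its own it would not give (i). The Dyson-series positivity of $\Psi(s,t)$ also comes for free from $P=\Psi$.

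For (iii) with respect to $\cF_s$ rather than $\sigma(X_s)$, there are two options. You can take $\cF_s$ to be the augmented natural filtration of $X$ and run a $\pi$--$\lambda$ argument on cylinder sets. Alternatively, take the filtration generated by the Poisson clock and the jump marks, and use their independence after time $s$.

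What your route buys over the citation is a self-contained argument that needs only continuity of $Q$. It also gives the fact actually used in the proof of Theorem~\ref{thm:main-nash}: $\bE[\int_t^T(f_r,dM_r)\mid\cF_t]=0$ for deterministic continuous $f$. To see this, approximate the integral by sums $\sum_k(f_{r_k},M_{r_{k+1}}-M_{r_k})$ and apply conditional dominated convergence. Domination holds because the total variation of $M$ on $[0,T]$ is bounded by a constant plus a multiple of the number of clock rings, which is integrable.
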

\begin{definition}\label{defn:mar10.2026.5} Let $\big(\cF_t\big)_{0 \leq t\leq T}$ be a filtration on a probability space $(\Omega, \bP^\mu, \cF)$ such that each $\cF_t$ contains the $\bP^\mu$--null sets. If $(X_t)_{0 \leq t\leq T}$ is a Markov chain which is $\big(\cF_t\big)_{0 \leq t\leq T}$--progressively measurable and satisfies the properties in Proposition \ref{prop:feb05.2025.6sum}, we call $(X_t)_{0 \leq t\leq T}$ a Markov chain generated by the pair $(\mu, Q)$. 
\end{definition}
\begin{definition}\label{defn:mar10.2026.3} Let $\rho_* \in C^2([0, T]; \cP_0(\G)).$ We call $v \in C^1([0, T]; \bS(n))$ an admissible control for $\rho_*$ if for all $(i, j) \in \bE$ the functions
\begin{equation}\label{eq:mar10.2025.3}
Q^{ij}[v|\rho_*]:=\omega_{ij}-\sqrt{\omega_{ij}}\partial_1 \theta(\rho^i_*, \rho^j_*)\big(v + \nabla_\G \log \rho_*\big)^{ij}
\end{equation}
are non-negative on $[0, T].$ In this case, we set $Q^{ij}[v|\rho_*]=0$ if $(i, j) \not \in \bE$ and $i \not =j,$ and 
\begin{equation}\label{eq:mar10.2025.4}
Q^{ii}[v|\rho_*]:=-\sum_{j \in N(i)} Q^{ij}[v|\rho_*].
\end{equation}
\end{definition}
\begin{remark} Fix $\mu\in\cP_0(\G)$ and let $\rho_*\in C^2([0,T];\cP_0(\G))$ satisfy $\rho_{*,0}=\mu$. If $v\in C^1([0,T];\bS(n))$ is an admissible control for $\rho_*$, let $\Psi$ solve~\eqref{eq:feb10.2025.0sum} with $Q=Q[v|\rho_*]$ and set $\rho_t:=\mu\,\Psi(0,t)$. By Proposition~\ref{prop:feb05.2025.6sum}, $\dot \rho_t=\rho_t Q[v|\rho_*]_t$ and so, using the fact that $\partial_1 \theta(r, s)\equiv \partial_2 \theta(s, r)$ we conclude that
\[
\dot \rho^j= \sum_{i \in N(j)} \omega_{ij}(\rho^i-\rho^j)- \sum_{i \in N(j)} \sqrt{\omega_{ij}}\Big( \rho^i \partial_1 \theta(\rho^i_*, \rho^j_*)+\rho^j \partial_2 \theta(\rho^i_*, \rho^j_*)\Big)\big(v + \nabla_\G \log \rho_*\big)^{ij}.
\]
Thus, if $\rho=\rho_*$, since the identity $r \partial_1 \theta(r, s)+s \partial_2 \theta(r, s)\equiv \theta(r, s)$ holds then $\dot{\rho} + \nabla_\rho\cdot v = 0$. Indeed, 
\[
\dot \rho^j=\sum_{i \in N(j)} \omega_{ij}(\rho^i-\rho^j)-\sum_{i \in N(j)} \sqrt{\omega_{ij}} \theta(\rho^i, \rho^j)\big(v + \nabla_\G \log \rho\big)^{ij}
=-\sum_{i \in N(j)} \sqrt{\omega_{ij}} \theta(\rho^i, \rho^j) v^{ij}= - (\nabla_\rho\cdot v)^j.
\]
\end{remark}
\begin{remark}\label{rem:mar11.2026.1} Let $v$ be an admissible control for $\rho_*$ as in Definition~\ref{defn:mar10.2026.3}, and let $(X_t)_{0\leq t\leq T}$ be a Markov chain generated by $(\mu, Q[v|\rho_*])$. For any $f: [0,T] \to \bR^n$, if $i \in \{1, \cdots, n\}$ and $X_s=e_i$, then
\[
\Big(f_s, X_s Q[v|\rho_*]_s\Big)= \big(\Delta_{\G} f_s \big)^i+ \sum_{k\not=i}^n \big(\nabla_\G f_s \big)^{ik} \partial_1 \theta\Big((\rho_{*,s})^i, (\rho_{*,s})^k\Big)\big(v_s+\nabla_\G \log \rho_{*,s}\big)^{ik}.
\]
\end{remark}
For $\mu \in \cP_0(\G)$, we use the shorthand
\[
\rho_s^{0,\mu}:=\rho(s,0,\mu), \qquad p_s^{0,\mu}:=-\nabla_\G u\big(s,\rho_s^{0,\mu}\big).
\]
For each $s\in[0,T]$, let $\bar v_s^{0,\mu}\in \bS(n)$ satisfy
\[
\bar \cL\big(\rho_s^{0,\mu}, \bar v_s^{0,\mu}\big)+\cH\big(\rho_s^{0,\mu}, p_s^{0,\mu}\big)=\big(\bar v_s^{0,\mu}, p_s^{0,\mu}\big)_{\rho_s^{0,\mu}},
\]
and define
\[
v_s^{0,\mu}:=\bar v_s^{0,\mu}-\nabla_\G \log \rho_s^{0,\mu}.
\]

For $x \in \bV$, we write
\[
u\big(t,x,\rho^{0,\mu}\big):=u\big(t,\rho_t^{0,\mu}\big)(x), \qquad g(x,\rho):=g(\rho)(x), \qquad L(x,\rho,v):=L(\rho,v)(x).
\]
Let $X$ be a Markov chain as in Proposition \ref{prop:feb05.2025.6sum} and fix $t \in [0, T]$ and $i\in \{1, \cdots, n\}$. If $\sigma\{X_t=e_i\}$ denotes the $\sigma$--algebra generated by $\{X_t=e_i\} \subset \Omega$, then for $\cF$--measurable $Y: \Omega \to \bR$   we define the deterministic value
\[
\bE_{t, e_i}[Y]:=\bE\big[Y\big | \sigma\{X_t=e_i\}\big]\big|_{\{X_t=e_i\}}.
\]
\begin{definition}[Deviation yielded by admissible controls]\label{def:J} Fix $\mu\in \cP_0(\G)$, $t\in[0,T]$ and $i\in \{1, \cdots, n\}$. Given $v_* \in C^1([0, T]; \bS(n))$, let $\rho_* \in C^1([0, T]; \cP_0(\G))$ be the unique solution to $\dot \rho_*+\nabla_{\rho_*} \cdot v_*=0$.  If $v$ is an admissible control for $\rho_*$, define
\begin{equation}\label{eq:J_def}
J(t,i;v_*,v)=\mathbb E_{t,e_i}\left[g(X_T,\rho_{*,T})+\int_t^TL(X_s,\rho_{*,s},v_s +\nabla_\G \log \rho_{*\, s})\,ds\right],
\end{equation}
where $(X_t)_{0 \leq t\leq T}$ is a Markov chain generated by the pair $\big(\mu, Q[v|\rho_*]\big).$
\end{definition}
\begin{definition}\label{defn:mar10.2026.2} Let $\mu \in \cP_0(\G)$, $v_* \in C^1([0, T]; \bS(n))$ and suppose that the unique solution $\rho_* \in C^1([0, T]; \cP_0(\G))$ to $\dot \rho_*+\nabla_{\rho_*} \cdot v_*=0$ is such that $v_*$ is an admissible control for $\rho_*.$  We call $v_* \in C^1([0, T]; \bS(n))$ a Nash equilibrium for the system $(L, g, \mu)$ if for every $v \in C^1([0, T]; \bS(n))$ that is an admissible control for $\rho_*$ and every $i \in \{1, \cdots, n\}$ we have that
\[
J(0,i; v_*,v_*) \leq J(0,i;v_*,v).
\]
If instead we only have that 
\[
J(0,i;v_*,v_*) \leq J\big(0,i;v_*,v_*[a]\big),
\]
for all $a \in \bR^{n-1},$ we call $v_* \in C^1([0, T]; \bS(n))$ a restricted Nash equilibrium.
\end{definition}

We shall also use the pathwise cost
\[
\cJ(t;v_*,v):=g(X_T,\rho_{*,T})+\int_t^T L(X_s, \rho_{*, s} ,v_s+\nabla_\G \log \rho_{* \, s})\,ds,
\]
which is a random variable satisfying $J(t,i;v_*,v)=\bE_{t,e_i}[\cJ(t;v_*,v)]$.
\begin{lemma}\label{lem:mar12.2026} Assume $(\bar\cL, \cH)$ satisfies the unique momentum property and $D_{\mu^i} \bar \cL(\mu, \cdot)$ is a convex function for all $i \in \{1, \cdots, n\}.$ Let $\mu \in \cP_0(\G)$, and suppose that $v^{0, \mu}$ and $v \in C^1([0, T]; \bS(n))$ are admissible controls for $\rho^{0, \mu}.$ Let $(X_t)_{t}$ be a Markov chain generated by the pair $\big(\mu, Q[v|\rho^{0,\mu}]\big)$ and set $M_t:=X_t-X_0 -\int_0^t X_\tau Q[v|\rho^{0,\mu}]_\tau d\tau$. We have that
\begin{equation}\label{eq:mar12.2026.1b}
u\big(t, X_t,\rho^{0, \mu}\big)-\cJ(t;v^{0, \mu}, v)\leq -\int_t^T\Big(  u\big(s,\rho^{0, \mu}_s\big), dM_s\Big)
\end{equation}
if either $v=v^{0, \mu}[a]$ for some $a \in \bR^{n-1}$ or \eqref{eq:march12.2026.3} holds,
and equality holds in \eqref{eq:mar12.2026.1b} if  $v=v^{0, \mu}.$
\end{lemma}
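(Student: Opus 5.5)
The plan is a martingale verification argument: apply the Itô-type formula \eqref{eq:Feb07.2026.9} to the deterministic $C^1$ curve $f_s:=u(s,\rho^{0,\mu}_s)$ along the chain $X$ generated by $(\mu,Q[v|\rho^{0,\mu}])$, then compare the drift with the running cost $L$ using the backward equation in \eqref{eq:mfg} and Lemma~\ref{lem:feb24.2026.2new}. Write $\rho:=\rho^{0,\mu}$ and $p_s:=-\nabla_\G u(s,\rho_s)$. By Lemma~\ref{lem:jan24.2026.6}, $f_s=\phi(s,0,\mu)$, so $f\in C^1$ (indeed $C^2$ by Proposition~\ref{prop:mfg-classical}). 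The first equation of \eqref{eq:mfg} with the choices \eqref{eq:define-all-functions} gives
\[
\dot f_s=D_\mu\cH(\rho_s,p_s)+F(\rho_s)-\Delta_\G f_s ,
\]
and $f_T=g(\rho_T)$.

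Applying \eqref{eq:Feb07.2026.9} between $t$ and $T$ gives
\[
g(X_T,\rho_T)-u(t,X_t,\rho)=\int_t^T\big[(\dot f_s,X_s)+(f_s,X_sQ_s)\big]ds+\int_t^T(f_s,dM_s).
\]
On $\{X_s=e_i\}$ I will use Remark~\ref{rem:mar11.2026.1} to expand $(f_s,X_sQ_s)$. The $\Delta_\G$ terms cancel against $\dot f_s$ (sign conventions to be checked), and since $\nabla_\G f_s=-p_s$ the integrand becomes
\[
D_{\mu^i}\cH(\rho_s,p_s)+D_{\mu^i}\cF(\rho_s)-\sum_{k\in N(i)}p_s^{ik}\,\partial_1\theta(\rho_s^i,\rho_s^k)\,\big(v_s+\nabla_\G\log\rho_s\big)^{ik}.
\]
Subtracting from this the running cost $L(e_i,\rho_s,v_s+\nabla_\G\log\rho_s)=D_{\mu^i}\bar\cL(\rho_s,\bar v)-D_{\mu^i}\cF(\rho_s)$ (modulo reconciling the argument convention of $L$ with that of $\ell$) cancels the $\cF$ terms. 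After rearranging,
\[
u(t,X_t,\rho)-\cJ(t;v^{0,\mu},v)=\int_t^T\big[\ell(\rho_s,p_s,\cdot,v_s,i)-D_{\mu^i}\cH(\rho_s,p_s)\big]\Big|_{i:X_s=e_i}ds-\int_t^T(f_s,dM_s)\ \ldots
\]
so the claim reduces to a pointwise sign, namely $-\ell\le D_{\mu^i}\cH$, i.e.\ $\ell+D_{\mu^i}\cH\ge0$ (orientation to be confirmed by careful bookkeeping).

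For that sign I invoke Lemma~\ref{lem:feb24.2026.2new} at $(\rho_s,p_s,v^{0,\mu}_s)$; its hypothesis is exactly the defining duality relation of $\bar v^{0,\mu}_s$. If $v=v^{0,\mu}[a]$, then $K_i(\rho_s,p_s,v_s)=\ell(a,v^{0,\mu}_s,i)$, and the first part of the lemma gives $-\ell(a,\cdot)\le\max_a(-\ell)=D_{\mu^i}\cH$. If instead \eqref{eq:march12.2026.3} holds, the second part (the supremum over all $w\in\bS(n)$) gives the same inequality for arbitrary $v$. For $v=v^{0,\mu}$ the maximum is attained at $a=0$ (resp.\ $w=v^{0,\mu}$), so the integrand vanishes and equality holds.

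I expect the main obstacle to be the sign and convention bookkeeping: matching $\nabla_\G u$ with $-p$, the orientation of $\partial_1\theta(\rho^i,\rho^k)$ versus $(\nabla_\G f)^{ik}$ in Remark~\ref{rem:mar11.2026.1}, and the meaning of the argument of $L$, so that the drift cancels to exactly $-\ell-D_{\mu^i}\cH$ with the $\Delta_\G$ and $\cF$ terms disappearing. A secondary point is to check that admissibility and the continuity of $v_s$ and $p_s$ make the pointwise inequality measurable along the path, so that it integrates. Everything else is routine once the identity from \eqref{eq:Feb07.2026.9} is written out.
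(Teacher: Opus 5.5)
Your proposal is correct and is essentially the paper's proof: you apply \eqref{eq:Feb07.2026.9} to $f_s=u(s,\rho^{0,\mu}_s)$, expand $(f_s,X_sQ_s)$ with Remark~\ref{rem:mar11.2026.1} so that the $\Delta_\G$ and $\cF$ terms cancel, and get the pointwise sign from Lemma~\ref{lem:feb24.2026.2new}. The only differences are cosmetic: you obtain $\dot f_s=H(\rho_s,\nabla_\G f_s)-\Delta_\G f_s$ from the backward equation in \eqref{eq:mfg} via Lemma~\ref{lem:jan24.2026.6}, whereas the paper writes $\dot f_s=\partial_s u+(\delta_\rho u,\dot\rho^{0,\mu})$ and substitutes the master equation, which gives the same identity; and the integrand in your displayed identity should be $-\big(K_i(\rho_s,p_s,v_s)+D_{\mu^i}\cH(\rho_s,p_s)\big)$ rather than $\ell-D_{\mu^i}\cH$, consistent with the reduction $\ell+D_{\mu^i}\cH\ge 0$ you then state.
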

\proof{} For $s \in [t, T]$, let $i(s)$ be such that $X_s=e_{i(s)}.$ We set
\[
{\rm Sum}:=
 \partial_s u\big(\cdot ,X,\rho^{0, \mu}\big) +\big(\delta_\rho u\big(\cdot,X,\rho^{0, \mu}\big),\dot\rho^{0, \mu}\big)+ \big(u\big(\cdot, \cdot,\rho^{0, \mu}\big)Q_\cdot^T, X \big) +L\big(X,\rho^{0, \mu}, v+\nabla_\G \log \rho^{0, \mu}\big).
\]
Since $u(T,\cdot)=g$, applying \eqref{eq:Feb07.2026.9} to $f_s:=u\big(s, \rho_s^{0, \mu}\big)$ we conclude that
\begin{equation}\label{eq:mar12.2026.1}
u\big(t, X_t,\rho^{0, \mu}\big)-\cJ(t; v^{0, \mu}, v)=- \int_t^T {\rm Sum}(s) ds - \int_t^T\big(  u(s,\rho^{0, \mu}_s), dM_s\big)
\end{equation}
We use Remark \ref{rem:mar11.2026.1} to infer 
\begin{align*}
 {\rm Sum}(s)=&L\big(X_s,\rho^{0, \mu}_s, v_s+\nabla_\G \log \rho^{0, \mu}_s\big)+ \partial_s u\Big(s,X_s,\rho^{0, \mu}_s\Big) +\Big(\nabla_\cW u(s,X_s,\rho^{0, \mu}_s), \bar v^{0, \mu}_s\Big)_{\rho^{0, \mu}_s}\\
&+\Delta_\text{ind} u(s,X_s,\rho^{0, \mu}_s)+ \Delta_{\G} u\big(s, X_s, \rho^{0, \mu}_s\big)\\
&+  \sum_{k\not={i(s)}}^n \Big(\nabla_\G u\big(s, \rho^{0, \mu}_s\big) \Big)^{{i(s)}k} \partial_1 \theta\Big((\rho_s^{0, \mu})^{i(s)}, (\rho_s^{0, \mu})^{k}\Big)\Big(v_s+\nabla_\G \log \rho^{0, \mu}_s\Big)^{{i(s)}k}
\end{align*}
Since $u$ satisfies the master equation  we infer  
\begin{align*}
 {\rm Sum}(s)=& L_{i(s)}\Big(\rho^{0, \mu}_s, v_s+\nabla_\G \log \rho^{0, \mu}_s\Big) +H_{i(s)} \Big(\rho^{0, \mu}_s, \nabla_\G u\big(s, \rho_s^{0, \mu}\big)\Big)\\
-&  \sum_{k\not={i(s)}}^n \Big(-\nabla_\G u\big(s, \rho^{0, \mu}_s\big) \Big)^{{i(s)}k} \partial_1 \theta\Big((\rho_s^{0, \mu})^{i(s)}, (\rho_s^{0, \mu})^{k}\Big)\Big(v_s+\nabla_\G \log \rho^{0, \mu}_s\Big)^{{i(s)}k}
\end{align*} 
This, together with \eqref{eq:mar12.2026.1} and Lemma \ref{lem:feb24.2026.2new}, proves the lemma. \endproof
\begin{theorem}[Nash equilibria]\label{thm:main-nash} Assume $(\bar\cL, \cH)$ satisfies the unique momentum property and $D_{\mu^i} \bar \cL(\mu, \cdot)$ is a convex function for all $i \in \{1, \cdots, n\}.$ Let $\mu \in \cP_0(\G)$. Assume moreover that $v^{0, \mu}$ is an admissible control for $\rho^{0, \mu}$. If \eqref{eq:march12.2026.3} holds, then $v^{0, \mu}$ is a Nash equilibrium for the system $(L, g, \mu).$ In the case \eqref{eq:march12.2026.3} fails, $v^{0, \mu}$ is merely a restricted Nash equilibrium for the system $(L, g, \mu).$
\end{theorem}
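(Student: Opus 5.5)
The proof will be a martingale verification built on Lemma~\ref{lem:mar12.2026}. First we set $v_*:=v^{0,\mu}$. We check that the density flow generated by $v_*$ is $\rho_*=\rho^{0,\mu}$. By Lemma~\ref{lem:mar03.2026.2}, $\theta_{ij}(\rho^{0,\mu}_s)\bar v_s^{0,\mu,ij}=D_{p^{ij}}\cH(\rho^{0,\mu}_s,p^{0,\mu}_s)$, and by Theorem~\ref{thm:main-mfg} (with \eqref{eq:define-all-functions}) the second equation of \eqref{eq:mfg} gives
\[
\dot\rho^{0,\mu}=-\nabla_\G\cdot D_p\cH(\rho^{0,\mu},p^{0,\mu})+\Delta_\G\rho^{0,\mu}.
\]
Since $\Delta_\G\rho=\nabla_\G\cdot\nabla_\G\rho$ and $\nabla_\G\rho=\theta\nabla_\G\log\rho$ entrywise, this reads $\dot\rho^{0,\mu}+\nabla_{\rho^{0,\mu}}\cdot v^{0,\mu}=0$. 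Uniqueness for this ODE then identifies $\rho_*$ with $\rho^{0,\mu}$. The regularity $v^{0,\mu}\in C^1$ follows from the $C^2$ regularity of $(\phi,\rho)$ and the implicit function theorem applied to the Legendre relation, using $D_{pp}\cH>0$. Hence the hypothesis that $v^{0,\mu}$ is admissible for $\rho^{0,\mu}$ makes sense, and $J(0,i;v_*,\cdot)$ is defined for every admissible $v$.

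Next we fix $i$ and an admissible $v$, assuming either \eqref{eq:march12.2026.3} or $v=v^{0,\mu}[a]$. Let $X$ be generated by $(\mu,Q[v|\rho^{0,\mu}])$ and apply Lemma~\ref{lem:mar12.2026} at $t=0$. The stochastic integral $\int_0^T(u(s,\rho_s^{0,\mu}),dM_s)$ has a deterministic, continuous, bounded integrand, and $M$ is a martingale with bounded jumps and bounded rates. It is therefore a true martingale, and its conditional expectation given $\sigma\{X_0=e_i\}$ vanishes. This needs $\mu^i>0$, which holds since $\mu\in\cP_0(\G)$. Taking $\bE_{0,e_i}$ in \eqref{eq:mar12.2026.1b} yields $u(0,\mu)(e_i)\le J(0,i;v^{0,\mu},v)$. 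Applying the same lemma with $v=v^{0,\mu}$, where equality holds, gives $u(0,\mu)(e_i)=J(0,i;v^{0,\mu},v^{0,\mu})$. Combining the two gives $J(0,i;v_*,v_*)\le J(0,i;v_*,v)$. When \eqref{eq:march12.2026.3} holds this is valid for all admissible $v$, so $v_*$ is a Nash equilibrium. Without \eqref{eq:march12.2026.3} it is valid only for $v=v_*[a]$, which is exactly a restricted Nash equilibrium.

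The main obstacle is the martingale step, namely justifying $\bE_{0,e_i}\big[\int_0^T(f_s,dM_s)\big]=0$. We handle it by conditioning on $\cF_0$: the compensated process started at time $0$ has zero mean conditionally on $X_0$, and we use Proposition~\ref{prop:feb05.2025.6sum}(iv) with the finite state space to get integrability. A secondary point is that the admissibility of $v_*[a]$ is assumed in the definition of a restricted Nash equilibrium, not derived.
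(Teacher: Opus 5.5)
Your proposal is correct and follows the paper's proof. Like the paper, you apply Lemma~\ref{lem:mar12.2026} at $t=0$, remove the stochastic integral by conditioning through $\cF_0\supset\sigma\{X_0=e_i\}$, and use the equality case $v=v^{0,\mu}$ to obtain $J(0,i;v^{0,\mu},v^{0,\mu})=u(0,\mu)^i\le J(0,i;v^{0,\mu},v)$. Your extra checks (that $v^{0,\mu}$ generates the flow $\rho^{0,\mu}$, and that the stochastic integral is a true martingale) are details the paper leaves implicit, and they do not change the argument.
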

\begin{remark}\label{rmk:admissibility-torus}
The admissibility hypothesis in Theorem~\ref{thm:main-nash} is expected to hold on sufficiently fine nearest-neighbor discretizations of $\mathbb T^d$. With mesh size $h$ one has $\omega_{ij}=h^{-2}$ on each edge, so that
\[
Q^{ij}[v^{0,\mu}|\rho^{0,\mu}]
= h^{-2}-h^{-1}\partial_1 \theta\Big((\rho_s^{0,\mu})^i, (\rho_s^{0,\mu})^j\Big)(\bar v_s^{0,\mu})^{ij}.
\]
In the model family~\eqref{eq:feb22.2026.5bisb},
\[
(\bar v_s^{0,\mu})^{ij}=h_{ij}'\Big(-\nabla_\G u\big(s,\rho_s^{0,\mu}\big)^{ij}\Big),
\]
so an $n$-uniform bound on $\nabla_\G u$ yields an $n$-uniform bound on $\bar v^{0,\mu}$. Since the $h^{-2}$ term then dominates the drift correction, $Q^{ij}[v^{0,\mu}|\rho^{0,\mu}]\geq 0$ on every edge once $h$ is small enough, in agreement with the limiting MFG system whenever $\rho^{0,\mu}$ converges to a smooth positive solution. It remains an interesting open problem to establish uniform estimates for a rigorous proof of this limiting argument.
\end{remark}
\proof{} Let $v \in C^1([0, T]; \bS(n))$ be an admissible control for $\rho^{0, \mu}.$ Let $(X_t)_{t}$ be a Markov chain generated by the pair $\big(\mu, Q[v|\rho^{0,\mu}]\big)$ and set  $M_t:=X_t-X_0 -\int_0^t X_\tau Q[v|\rho^{0,\mu}]_\tau d\tau$. As in Definition \ref{defn:mar10.2026.5}, let $\big(\cF_t\big)_{0 \leq t\leq T}$ be a filtration on a probability space $(\Omega, \bP^\mu, \cF)$ such that each $\cF_t$ contains the $\bP^\mu$--null sets, and $(X_t)_{0 \leq t\leq T}$ is $\big(\cF_t\big)_{0 \leq t\leq T}$--progressively measurable. Since $(M_t)$ is a $(\cF_t)_t$--martingale and $\sigma\{X_t=e_i\} \subset \cF_t$, we have that 
\begin{equation}\label{eq:mar13.2026.4}
\bE\bigg[\int_t^T\Big(  u\big(s,\rho^{0, \mu}_s\big), dM_s\Big)\bigg| \sigma\{X_t=e_i\}\bigg]=\bE\Bigg[\bE\Big[\int_t^T\Big(  u\big(s,\rho^{0, \mu}_s\big), dM_s\Big)\Big| \cF_t\Big]\bigg| \sigma\{X_t=e_i\}\Bigg]=0.
\end{equation} 
By Proposition \ref{prop:feb05.2025.6sum}, $X_{0 \#}\bP^{\mu}=\mu$, so
\[
\bE_{0, e_i}\Big[u(0, X_0, \rho^{0, \mu})\Big]=u(0, e_i, \rho^{0, \mu}),
\]
which is independent of $v$. In light of Lemma \ref{lem:mar12.2026}, \eqref{eq:mar13.2026.4} with $t=0$ implies that if \eqref{eq:march12.2026.3} holds then
\[
J(0,i; v^{0, \mu}, v^{0, \mu}) = u(0, e_i,\rho^{0, \mu}) \leq J(0,i; v^{0, \mu}, v).
\]
Hence, $v^{0, \mu}$ is a Nash equilibrium for the system $(L, g, \mu).$ Similarly, in case \eqref{eq:march12.2026.3} fails, by Lemma \ref{lem:mar12.2026}, $v^{0, \mu}$ is a restricted Nash equilibrium for the system $(L, g, \mu).$
\endproof

\begin{ack*}
WG was supported by NSF grant DMS--2154578. SM and ZZ gratefully acknowledge the support of the UCLA Department of Mathematics through their Hedrick Math Fellowships. This project began while JW was a visiting faculty member at UCLA. WG thanks Y.~Gao, W.~Li, and J.-G.~Liu for fruitful discussions.
\end{ack*}

%
%
%
\bibliographystyle{amsplain}
\providecommand{\bysame}{\leavevmode\hbox to3em{\hrulefill}\thinspace}
\providecommand{\MR}{\relax\ifhmode\unskip\space\fi MR }
\providecommand{\MRhref}[2]{%
  \href{http://www.ams.org/mathscinet-getitem?mr=#1}{#2}
}
\providecommand{\href}[2]{#2}

\end{document}